\tikzset{
    state/.style={
           rectangle,
           rounded corners,
           draw=black, very thick,
           minimum height=2em,
           inner sep=2pt,
           text centered,
           },
}
\DeclareMathOperator{\SL}{SL}
\DeclareMathOperator{\GL}{GL}
\DeclareMathOperator{\Spec}{Spec}
\DeclareMathOperator{\Proj}{Proj}
\DeclareMathOperator{\Supp}{Supp}
\newcommand*{\vv}{\ensuremath{\mathbf{v}}}      
\newcommand*{\uu}{\ensuremath{\mathbf{u}}}  
\newcommand*{\ww}{\ensuremath{\mathbf{w}}}   
\newcommand*{\dd}{\ensuremath{\mathbf{d}}} 
\newcommand*{\ee}{\ensuremath{\mathbf{e}}}    
\newcommand*{\pp}{\ensuremath{\mathbf{p}}} 
\newcommand*{\g}{\ensuremath{\mathfrak{g}}} 
\newenvironment{customthm}[1]
  {\innercustomthm}
  {\endinnercustomthm}
\theoremstyle{plain}
  \newtheorem{theorem}[subsection]{Theorem}
  \newtheorem{proposition}[subsection]{Proposition}
  \newtheorem{lemma}[subsection]{Lemma}
  \newtheorem{corollary}[subsection]{Corollary}
\theoremstyle{definition}
  \newtheorem{definition}[subsection]{Definition}
\theoremstyle{remark}
  \newtheorem{remark}[subsection]{Remark}
\numberwithin{equation}{section}
\author{Yehao Zhou}
\address{Yehao Zhou\newline
\indent Perimeter Institute for Theoretical Physics,\newline
\indent 31 Caroline St N, Waterloo, ON, Canada N2L 2Y5;\newline
\indent Department of Physics and Astronomy,\newline
\indent University of Waterloo.
}
\email{yzhou3@pitp.ca}
\title{On the Reducedness of Quiver Schemes}
\begin{document}

\begin{abstract}
In this paper we prove that a quiver scheme in characteristic zero is reduced if the moment map is flat. We use the reducedness result to show that the equivariant integration formula computes the K-theoretic Nekrasov partition function of five dimensional $\mathcal N=2$ quiver gauge theories when the moment map is flat. We also give an explicit characterization of flatness of moment map for finite and affine type A Dynkin quivers with framings. As an application, we give a refinement of a theorem of Ivan Losev on the relation between quantized Nakajima quiver variety in type A and parabolic finite W-algebra in type A.
\end{abstract}


\maketitle

\section{Introduction}
Nakajima's quiver varieties play important roles in representation theory, algebraic geometry, mathematical physics and other fields. For example they geometrically realize representations of Kac-Moody algebras \cite{nakajima1998quiver}, Yangians \cite{varagnolo2000quiver}, quantum affine algebras \cite{nakajima2001quiver}, and they are the key objects in the study of Bethe/gauge correspondences \cite{maulik2012quantum}. They also show up as Higgs branches of three dimensional $\mathcal N=4$ quiver gauge theories \cite{intriligator1996mirror}.

Quiver varieties can be defined analytically as hyper-K\" ahler reductions of ADHM data of a quiver \cite{nakajima1994instantons}, and also algebraically as a GIT quotient \cite{nakajima1994instantons}. The algebraic construction gives rise to a \textit{quiver scheme}, whose underlying reduced scheme structure is the quiver variety. The formal definition is as follows.

Let $(Q,\mathbf{v})$ be a quiver, where $Q$ is a finite directed graph with vertex set $Q_0$ and arrow set $Q_1$, and $\mathbf{v}\in \mathbb N^{Q_0}$ is called the dimension vector. Define the representation space of $(Q,\mathbf{v})$
\begin{align}
    R(Q,\vv)=\bigoplus_{a\in Q_1}\mathrm{Hom}(\mathbb C^{\mathbf{v}_{t(a)}},\mathbb C^{\mathbf{v}_{h(a)}}),
\end{align}
where $h(a)$ and $t(a)$ are head and tail of an arrow $a\in Q_1$. The reductive group $G(\vv)=\prod_{i\in Q_0}\mathrm{GL}(\mathbf{v}_i)/\mathbb C^{\times}$ naturally acts on $R(Q,\vv)$, where $\mathbb C^{\times}$ embeds as the diagonal center and it acts trivially. There is a moment map $\mu:T^*R(Q,\mathbf{v})\to \g(\mathbf{v})^*$ associated to the $\GL(\mathbf{v})$-action. Let $Z$ be the subspace of $\prod_{\vv_i\neq 0}\mathbb C\subset \mathbb C^{Q_0}$ that is annihilated by $\vv$. Note that $Z$ can be identified with $(\g(\vv)/[\g(\vv),\g(\vv)])^*$, which is a subspace of $\g(\vv)^*$. Similarly let $\Theta$ be the subspace of $\prod_{\vv_i\neq 0}\mathbb Q\subset\mathbb Q^{Q_0}$ that is annihilated by $\vv$. For $\theta\in \Theta$ , we denote the $\theta$-semistable (respectively $\theta$-stable) locus of $\mu^{-1}(Z)$ by $\mu^{-1}(Z)^{\theta\mathrm{-ss}}$ (respectively $\mu^{-1}(Z)^{\theta\mathrm{-s}}$). For the definition of $\theta$-(semi)stability, see \cite{king1994moduli}.

\begin{definition}
Define the universal quiver scheme with stability condition $\theta\in \Theta$ by the GIT quotient
\begin{align}
    \mathcal M^{\theta}_Z(Q,\vv):=\mu^{-1}(Z)^{\theta\mathrm{-ss}}/G(\vv).
\end{align}
And for $\lambda\in Z$, define the quiver scheme with stability condition $\theta$ by the GIT quotient
\begin{align}
    \mathcal M^{\theta}_{\lambda}(Q,\vv):=\mu^{-1}({\lambda})^{\theta\mathrm{-ss}}/G(\vv).
\end{align}
Here we do not take reduced scheme structure.
\end{definition}

In the study of using the cohomology of quiver schemes to geometrize representations of quantum algebras, it is fine to take the underlying reduced scheme structure, since this does not make a difference to the cohomology. However in other circumstances the potentially non-reduced scheme structure does raise an issue.

One example comes from the computation of K-theoretic Nekrasov partition function of 5d $\mathcal N=2$ quiver gauge theories, which is the equivariant K-theory class of $\Gamma(\mathcal M^{\theta}_{0}(Q,\vv),\mathcal O_{\mathcal M^{\theta}_{0}(Q,\vv)})$ for a generic stability parameter $\theta$ (see Definition \ref{def: generic stability}). Physicists proposed a way to compute it using the equivariant integration \cite{nekrasov2004abcd,butti2007counting,benvenuti2010hilbert}, which effectively computes the equivariant K-theory class of the differential graded algebra 
\begin{align}\label{eqn: dga}
    \left(\mathbb C[T^*R(Q,\vv)]\overset{L}{\otimes}_{\mathbb C[\mathfrak{g}(\vv)^*]}\mathbb C\right)^{G(\vv)},
\end{align}
via the Koszul resolution of $\mathbb C$ as a $\mathbb C[\mathfrak{g}(\vv)^*]$-module. There are two potential issues in this proposal. The first is that \eqref{eqn: dga} might have non-zero homological degree components, which maps to zero in $\Gamma(\mathcal M^{\theta}_{0}(Q,\vv),\mathcal O_{\mathcal M^{\theta}_{0}(Q,\vv)})$ via the natural projection $\mathcal M^{\theta}_{0}(Q,\vv)\to \mathcal M_{0}(Q,\vv)$. One may resolve this issue by requiring that the moment map $\mu:T^*R(Q,\vv)\to \mathfrak{g}(\vv)^*$ is flat, then \eqref{eqn: dga} is concentrated in homological degree zero, and this is in fact the case for many examples computed in \cite{benvenuti2010hilbert}. Another issue is that the $H^0$ part of \eqref{eqn: dga} is $\mathbb C[\mathcal M_0(Q,\vv)]$, which might not be isomorphic to $\Gamma(\mathcal M^{\theta}_{0}(Q,\vv),\mathcal O_{\mathcal M^{\theta}_{0}(Q,\vv)})$: $\mathcal M^{\theta}_{0}(Q,\vv)$ and $\mathcal M_{0}(Q,\vv)$ can have different dimensions, and even if they have the same dimension, they are not isomorphic if there are nilpotent elements in $\mathbb C[\mathcal M_0(Q,\vv)]$. This is the place where reducedness becomes important. Nevertheless, we will see that if the moment map is flat, then both of the potential issues are resolved, namely the differential graded algebra \eqref{eqn: dga} is quasi-isomorphic to $\Gamma(\mathcal M^{\theta}_{0}(Q,\vv),\mathcal O_{\mathcal M^{\theta}_{0}(Q,\vv)})$ (see Corollary \ref{cor: resolution of singularity}), in other words the equivariant integration formula holds under the assumption on the flatness of the moment map.

It seems to be unknown whether $\mathcal M^{\theta}_{\lambda}(Q,\vv)$ is in general reduced or not, though in some cases the reducedness have been shown. For example Gan and Ginzburg \cite{gan2006almost} showed that when $Q$ is a framed Jordan quiver, $\mathcal M^{\theta}_{\lambda}(Q,\vv)$ is reduced for all $(\theta,\lambda)\in \Theta\times Z$; Crawley-Boevey \cite{crawley2001geometry} showed that if $\vv\in \Sigma_{\lambda}$ then $\mathcal M_{\lambda}(Q,\vv)$ is reduced, later Bellamy and Schedler \cite{bellamy2016symplectic} extended Crawley-Boevey's result to that if $\vv\in \Sigma_{\lambda,\theta},\lambda\in \mathbb R^{Q_0}\cap Z$, then $\mathcal M^{\theta}_{\lambda}(Q,\vv)$ is reduced.

The main goal of this paper is to simultaneously generalize previous results on reducedness of quiver schemes of Gan and Ginzburg \cite{gan2006almost}, Crawley-Boevey \cite{crawley2001geometry}, Bellamy and Schedler \cite{bellamy2016symplectic}. The formal statement is as follows.

\begin{customthm}{A}\label{thm: reducedness_enhanced}
If the moment map $\mu$ is flat along $\mu^{-1}(\lambda)^{\theta\mathrm{-ss}}$, then the scheme $\mathcal{M}^{\theta}_{\lambda}(Q,\mathbf{v})$ is reduced.
\end{customthm}

When $Q$ is a framed Jordan quiver then $\mu$ is flat (cf. Proposition \ref{prop: flatness in type A}). When $\vv\in \Sigma_{\lambda,\theta}$ then $\mu$ is flat along $\mu^{-1}(\lambda)^{\theta\mathrm{-ss}}$ \cite[Proposition 3.28]{bellamy2016symplectic}. Therefore Theorem \ref{thm: reducedness_enhanced} indeed covers previous results on reducedness of quiver schemes in \cite{gan2006almost,crawley2001geometry,bellamy2016symplectic}. A special case of the above theorem is the following.

\begin{customthm}{B}\label{thm: reducedness}
If the moment map $\mu$ is flat, then $\forall (\theta,\lambda)\in \Theta\times Z$, the scheme $\mathcal{M}^{\theta}_{\lambda}(Q,\mathbf{v})$ is reduced.
\end{customthm}

In fact Theorem \ref{thm: reducedness_enhanced} can be deduced from Theorem \ref{thm: reducedness}, see Proposition \ref{prop: special case implies general case}.

\begin{remark}\label{rmk: connected components}
If $\mu$ is flat along $\mu^{-1}(\lambda)^{\theta\mathrm{-ss}}$ (assume it is non-empty), then the support of the dimension vector $\vv$ must be connected. In fact, if $\Supp(\vv)=\Supp(\vv^{(1)})\sqcup \Supp(\vv^{(2)})$ such that $\vv=\vv^{(1)}+\vv^{(2)}$, then the image of $\mu$ lies in $\mathfrak{g}(\vv^{(1)})^*\oplus \mathfrak{g}(\vv^{(2)})^*$, which is a proper linear subspace of $\mathfrak{g}(\vv)^{*}$, but the flatness of $\mu$ along $\mu^{-1}(\lambda)^{\theta\mathrm{-ss}}$ implies that the image of $\mu$ is Zariski-dense in $\mathfrak{g}(\vv)^{*}$, we get a contradiction. However, this is just a feature of our definition of the group action that it does not take the possibility of multiple connected components of $\Supp(\vv)$ into account, a more careful definition could separate the components and discuss flatness locally on each component. This is a straightforward generalization of the discussion in this paper, and in order to simplify notation we will not invoke this generalization throughout this paper.
\end{remark}

\begin{remark}\label{rmk: other Hamiltonian reductions}
One might wonder if Theorem \ref{thm: reducedness} generalizes to other Hamiltonian reductions, namely if $M$ is a complex symplectic representation of complex algebraic group $G$ with a flat moment map $\mu: M\to \mathfrak{g}^*$, then is $M\sslash_0 G=\mu^{-1}(0)/G$ reduced or not? The answer is that $M\sslash_0 G$ is not reduced in general, for example the algebraic Uhlenbeck partial compactification of moduli space of framed $\mathrm{SO}(3)$-instantons on $S^4$ with instanton number $4$ is isomorphic to a Hamiltonian reduction with flat moment map, but it is not reduced, see \cite{choy2016corrigendum,choy2016moduli} for more detail.
\end{remark}

The paper is organized as follows.

In Section \ref{sec: preliminaries} we present preliminaries on quiver schemes that will be useful in the proof of Theorem \ref{thm: reducedness_enhanced}, and we also derive some corollaries of Theorem \ref{thm: reducedness_enhanced}. Specifically we recall a criterion on the flatness of moment map in \ref{subsec: Criterion of flatness}, and recall a theorem on \' etale transversal slice in \ref{subsec: Transverse slice}, then reduces the proof of Theorem \ref{thm: reducedness_enhanced} to that of Theorem \ref{thm: reducedness}. In \ref{subsec: Generic parameters} and \ref{subsec: Relation between different stability conditions} we discuss generic stability parameters and deduce Lemma \ref{lemma: normality of family implies reducedness} which is a key to induction step in the proof of Theorem \ref{thm: reducedness}, then prove Corollary \ref{cor: resolution of singularity} which is useful in applications (for example the computation of K-theoretic Nekrasov partition functions). In \ref{subsec: Reflection isomorphisms} we apply the Theorem \ref{thm: reducedness} to construct the $(\pm 1)$-reflection isomorphism for all $(\theta,\lambda)\in \Theta\times Z$, assuming that the moment map is flat.

In Section \ref{sec: proof} we present the proof of Theorem \ref{thm: reducedness}. The idea is to use the induction on $|\vv|=\sum_{i\in Q_0}\vv_i$, combined with Lemma \ref{lemma: normality of family implies reducedness} and a result of Crawley-Boevey recalled in Lemma \ref{Lemma: Normality}. Then it boils down to some simple cases which are discussed in \ref{subsec: case of one vertex}, \ref{subsec: case of framed one vertex}, and the end of Section \ref{sec: proof}.

In Section \ref{sec: W-algebra} we discuss the application of Theorem \ref{thm: reducedness} to the study of quantization of quiver schemes. In \ref{subsec: affine type A} we focus on affine type A Dynkin quivers and give an explicit description of the set of $(\vv,\dd)$ such that the moment map for the framed quiver $(Q,\vv,\dd)$ is flat (Proposition \ref{prop: flatness in type A}). In \ref{subsec: Quantization of quiver schemes} we recall the definition of quantum Hamiltonian reduction and also its sheafified version, and show that in the quiver scheme case, if the moment map is flat and $\vv$ is indivisible then the quantized quiver scheme is isomorphic to certain sub-algebra of global sections of quantized structure sheaf on a resolution (Proposition \ref{prop: isomorphism different stability condition}). Finally in \ref{subsec: Finite W-algebras} we present a refinement of a theorem of Ivan Losev on the relation between quantized Nakajima quiver schemes in type A and parabolic finite W-algebras in type A (Proposition \ref{prop: quantum quiver scheme and W-algebra}).

\subsection*{Acknowledgement} I would like to thank Joel Kamnitzer, Seyed Faroogh Moosavian, and Hiraku Nakajima for helpful discussions and comments. Research at Perimeter Institute is supported by the Government of Canada through Industry Canada and by the Province of Ontario through the Ministry of Research and Innovation.

\section{Preliminaries on Quiver Schemes}\label{sec: preliminaries}

\subsection{Criterion of flatness of the moment map}\label{subsec: Criterion of flatness} We recall a theorem of Crawley-Boevey here.
\begin{lemma}[{\cite[Theorem 1.1]{crawley2001geometry}}]\label{lemma: flatness}
The moment map $\mu$ is flat if and only if
\begin{align}\label{eqn: goodness for framed quiver}
    \pp(\vv)\ge \sum_{t=1}^r\pp(\vv^{(t)}),
\end{align}
for all decomposition $\mathbf{v}=\mathbf{v}^{(1)}+\cdots+\mathbf{v}^{(r)}$ into non-zero elements in $\mathbb N^{Q_0}$. Here $\pp$ is the function
\begin{align}
    \pp(\vv)=1-\frac{1}{2}\vv\cdot C_Q\vv,
\end{align}
where $C_Q$ is the Cartan matrix of $Q$.
\end{lemma}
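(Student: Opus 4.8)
The plan is to reduce the flatness of $\mu$ to a single dimension count on the central fibre, and then to compute that dimension by stratifying according to representation type.

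First I would identify $T^*R(Q,\vv)$ with the representation space $\mathrm{Rep}(\overline Q,\vv)$ of the doubled quiver $\overline Q$, under which $\mu$ becomes the quadratic map $x\mapsto\big(\sum_{h(a)=i}x_ax_{a^*}-\sum_{t(a)=i}x_{a^*}x_a\big)_{i\in Q_0}$ into $\g(\vv)^*$, and $\mu^{-1}(0)$ becomes the variety of representations of the preprojective algebra $\Pi$. Choose linear coordinates on $\g(\vv)^*$; their pullbacks $\mu_1,\dots,\mu_N$ (with $N=\dim\g(\vv)=\sum_i\vv_i^2-1$) are homogeneous quadratics on the smooth affine space $T^*R(Q,\vv)$. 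Since $\bC[T^*R(Q,\vv)]$ is Cohen--Macaulay and everything is graded, the graded local criterion for flatness says that $\mu$ is flat iff $\mathrm{Tor}_1^{\bC[\g(\vv)^*]}(\bC[T^*R(Q,\vv)],\bC)=0$, iff $\mu_1,\dots,\mu_N$ form a regular sequence, iff the central fibre has the expected dimension
\[ \dim\mu^{-1}(0)=2\dim R(Q,\vv)-N=:d_0. \]
As $\mu^{-1}(0)$ is cut out by $N$ equations one always has $\dim\mu^{-1}(0)\ge d_0$, so the whole statement reduces to deciding when equality holds. (Equivalently one can invoke miracle flatness: $\mu$ is homogeneous for the scaling action, so $\mu^{-1}(0)$ is the fibre of maximal dimension and flatness is exactly the statement that this maximum equals $d_0$.)

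Second, I would establish the dimension formula
\[ \dim\mu^{-1}(0)=d_0+\Big(\max\nolimits_{\vv=\sum_t\vv^{(t)}}\ \textstyle\sum_t\pp(\vv^{(t)})-\pp(\vv)\Big), \]
the maximum being over all decompositions into nonzero vectors of $\bN^{Q_0}$. Because the trivial decomposition contributes $\pp(\vv)$, the bracket is $\ge 0$, and it vanishes precisely when $\pp(\vv)\ge\sum_t\pp(\vv^{(t)})$ for every decomposition; together with the first step this is exactly the asserted criterion. To prove the formula I would stratify the affine quotient $\mathcal M_0(Q,\vv)=\mu^{-1}(0)/\!\!/G(\vv)$ by semisimple representation type $\tau=(\gamma^{(t)},m_t)$, where the $S_t$ are distinct simple $\Pi$-modules with $\dim S_t=\gamma^{(t)}$ and $\vv=\sum_t m_t\gamma^{(t)}$. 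The two inputs are: (i) simple $\Pi$-modules of dimension vector $\gamma$ exist exactly for $\gamma$ in a distinguished set of positive roots, and their moduli have dimension $2\pp(\gamma)$ in $\mathcal M_0$, so the stratum of type $\tau$ has dimension $\sum_t 2m_t\pp(\gamma^{(t)})$; and (ii) the étale transverse slice theorem, which identifies $\mu^{-1}(0)$ near a closed orbit of type $\tau$ with $G(\vv)\times_{G_x}\mu_{\mathrm{loc}}^{-1}(0)$ for the local quiver attached to $\tau$ (with $\dim\Ext^1_\Pi(S_s,S_t)$ arrows between $s$ and $t$ and dimension vector $(m_t)$). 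Maximising over $\tau$ gives $\dim\mathcal M_0(Q,\vv)=2\max\sum\pp$, and the parallel stratification of $\mu^{-1}(0)$ itself, with fibres controlled via the slice by the nilpotent part of $\mu_{\mathrm{loc}}^{-1}(0)$, yields the displayed formula by induction on $|\vv|$.

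For the two directions: if some decomposition has $\sum_t\pp(\vv^{(t)})>\pp(\vv)$, I would first refine it into roots admitting simples without decreasing $\sum\pp$ (any vector that admits no simple splits with nondecreasing $\pp$), then realise a stratum of dimension $>d_0$ by taking direct sums of the corresponding simple modules and spreading them out by $G(\vv)$; this forces non-flatness. Conversely, when the inequality holds one must bound every stratum by $d_0$, which is where the inductive slice argument does the work. I expect the main obstacle to be precisely this dimension computation for $\mu^{-1}(0)$: it rests on the existence theorem for simple representations of the preprojective algebra (a Deligne--Simpson-type statement, proved using reflection functors, which preserve both flatness and the Weyl-invariant function $\pp$ and so allow one to reduce $\vv$ to the fundamental region) together with the transverse slice reduction. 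Everything else---the homological reduction of the first step and the combinatorial bookkeeping with $\pp$---is comparatively formal.
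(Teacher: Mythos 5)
The paper offers no proof of this lemma at all: it is quoted verbatim as \cite[Theorem 1.1]{crawley2001geometry}, so the only meaningful comparison is with Crawley-Boevey's original argument. Your opening reduction is exactly his: since $T^*R(Q,\vv)$ is smooth and $\mu$ is given by $N=\dim\g(\vv)$ equations, every fibre has dimension at least $d_0=2\dim R(Q,\vv)-N=\vv\cdot\vv-1+2\pp(\vv)$, the central fibre is the largest one by homogeneity, and miracle flatness turns the whole statement into the single equality $\dim\mu^{-1}(0)=d_0$. Your target dimension formula $\dim\mu^{-1}(0)=d_0+\max_\tau\bigl(\sum_t\pp(\vv^{(t)})-\pp(\vv)\bigr)$ is also the correct intermediate statement.

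The genuine gap is in how you propose to prove that formula. Crawley-Boevey does \emph{not} stratify by semisimple representation type of the preprojective algebra; he stratifies the half-space $R(Q,\vv)$ by decomposition type into \emph{indecomposable} $kQ$-modules, observes that the fibre of $\pi:\mu^{-1}(0)\to R(Q,\vv)$ over $x$ is the linear space $\Ext^1_{kQ}(x,x)^*$, and controls the strata dimensions by Kac's theorem on the number of parameters of indecomposables --- all inputs about the path algebra, none about $\Pi$. Your route instead rests on (i) the classification of dimension vectors admitting simple $\Pi$-modules (the set $\Sigma_0$) together with the $2\pp(\gamma)$ dimension count for their moduli, and (ii) the claim that any decomposition can be refined into such vectors without decreasing $\sum\pp$. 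Both of these are, in the standard development, \emph{consequences} of the flatness theorem (they are Theorem 1.2 and the surrounding lemmas of the same paper, and the slice-plus-induction dimension bounds you invoke are Corollary 6.4 of the normality paper, which again builds on flatness). As written your argument is therefore circular, or at minimum assumes as input a theorem at least as deep as the one being proved; to close the gap you would either have to give an independent proof of the existence theory for simple $\Pi$-modules, or replace step two by Crawley-Boevey's path-algebra stratification, where the needed dimension estimates come from Kac's theorem rather than from the geometry of $\mu^{-1}(0)$ itself.
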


It is worth mentioning that \cite{su2006flatness} gives an explicit description of the set of $\vv$ such that the moment map is flat in terms of $(-1)$-reflections (see \textit{loc. cit.} for the definition). We will come back to this point at this end of this section.

\subsection{Transverse slice}\label{subsec: Transverse slice}
One important tool of our proof of Theorem \ref{thm: reducedness_enhanced} is the following result essentially due to Bellamy and Schedler \cite{bellamy2016symplectic} of which the $\theta=0$ case goes back to Crawley-Boevey \cite{crawley2001normality}:
\begin{lemma}\label{lem: transverse slice}
Take $\lambda\in Z$, then for every point $x\in \mu^{-1}(\lambda)^{\theta\mathrm{-ss}}$ such that its orbit $G(\vv)\cdot x$ is closed in $\mu^{-1}(\lambda)^{\theta\mathrm{-ss}}$, $G(\mathbf{v})\cdot x$ has an \' etale transverse slice in $x\in \mu^{-1}(\lambda)^{\theta\mathrm{-ss}}$ such that it is \' etale locally isomorphic to an open neighborhood $U$ of $0\in \hat\mu^{-1}(0)$, where $\hat\mu$ is the moment map for another quiver $(\hat{Q},\hat{\mathbf{v}})$ such that $G(\hat{\mathbf{v}})\cong G(\mathbf{v})_x$ (stabilizer of $x$). In addition, this \' etale transverse slice gives rise to an isomorphism:
\begin{align}
    \mathcal{M}^{\theta}_{\lambda}(Q,\mathbf{v})^{\wedge}_x   \cong \mathcal{M}_{0}(\hat Q,\hat\vv)^{\wedge}_0.
\end{align}
Here $(-)^{\wedge}_x$ means formal completion at $x$. Moreover if $\mu$ is flat at $x$, then $\hat\mu$ is flat.
\end{lemma}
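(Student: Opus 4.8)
The plan is to obtain the statement from Luna's \'etale slice theorem for the Hamiltonian $G(\vv)$-action on $T^*R(Q,\vv)$, together with an explicit identification of the symplectic slice; this is the approach of Crawley-Boevey in the $\theta=0$ case, adapted to the $\theta$-semistable (GIT) setting of Bellamy--Schedler. A point of $T^*R(Q,\vv)$ is a representation of the doubled quiver, and $\mu^{-1}(\lambda)$ is precisely the variety of modules over the deformed preprojective algebra $\Pi_\lambda=\Pi_\lambda(Q)$ of dimension vector $\vv$. Since $G(\vv)\cdot x$ is closed in the $\theta$-semistable locus, $x$ is $\theta$-polystable; in a $G(\vv)$-invariant affine chart (the nonvanishing locus of a $\theta$-semi-invariant at $x$) its orbit is closed, so Matsushima's criterion makes the stabilizer $H:=G(\vv)_x$ reductive. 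I would then decompose $x\cong\bigoplus_k S_k^{\oplus m_k}$ into pairwise non-isomorphic $\theta$-stable $\Pi_\lambda$-modules $S_k$ of slope zero and dimension vector $\beta^{(k)}$, so that $\vv=\sum_k m_k\beta^{(k)}$ and $H\cong(\prod_k\GL(m_k))/\mathbb{C}^{\times}$.

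Luna's theorem then produces an $H$-stable locally closed slice through $x$ whose associated bundle is \'etale onto a saturated neighborhood of the orbit, giving an \'etale map of quotients onto a neighborhood of $[x]$ in $\mathcal{M}^\theta_\lambda(Q,\vv)$. The crux is to identify the slice. Passing to the linear model, I would identify the symplectic normal space to the orbit, as a symplectic $H$-module, with $T^*R(\hat Q,\hat\vv)$, where $\hat Q$ has one vertex for each class $[S_k]$, dimension vector $\hat\vv=(m_k)_k$, and arrows and loops dictated by $\dim\Ext^1_{\Pi_\lambda}(S_j,S_k)$ --- controlled by the symmetrized Euler form via the $2$-Calabi--Yau property of $\Pi_\lambda$. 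Under this identification $G(\hat\vv)\cong H$ and the restriction of $\mu-\lambda$ to the slice becomes the quiver moment map $\hat\mu$, so the slice is \'etale-locally an open neighborhood $U$ of $0\in\hat\mu^{-1}(0)$.

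To see that the quotient on the slice is the affine one, I would track the two parameters: the character $\theta$ restricts to $H$ with components $\hat\theta_k=\theta\cdot\beta^{(k)}$, and the new moment parameter has components $\hat\lambda_k=\lambda\cdot\beta^{(k)}$. The former vanish because each $S_k$ has $\theta$-slope zero, and the latter vanish because every $\Pi_\lambda$-module $M$ satisfies $\lambda\cdot\dim M=0$ (take the trace of the defining relation $\sum_a[a,a^*]=\lambda$). Hence $\hat\theta=0$ and $\hat\lambda=0$, the slice quotient is the affine quotient $\mathcal{M}_0(\hat Q,\hat\vv)$, and passing to completions the \'etale map gives $\mathcal{M}^\theta_\lambda(Q,\vv)^{\wedge}_x\cong\mathcal{M}_0(\hat Q,\hat\vv)^{\wedge}_0$. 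For the final clause, flatness is an open condition preserved by the \'etale slice, so $\mu$ flat at $x$ forces $\hat\mu$ flat at $0$; since $\hat\mu$ is homogeneous and its flat locus is a $\mathbb{C}^{\times}$-invariant open set contracted onto $0$, it is then flat everywhere (alternatively one applies the combinatorial criterion of Lemma \ref{lemma: flatness}).

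I expect the main obstacle to be the identification of the symplectic slice with $T^*R(\hat Q,\hat\vv)$: one must compute $\Ext^1_{\Pi_\lambda}(S_j,S_k)$ to read off the correct number of arrows and loops of $\hat Q$, and verify that the symplectic pairing and the residual $H$-action on the slice match the standard symplectic $G(\hat\vv)$-module structure on $T^*R(\hat Q,\hat\vv)$. The secondary technical point is upgrading Luna's theorem from the affine quotient to the GIT quotient $\mathcal{M}^\theta_\lambda$, which requires working in the $G(\vv)$-invariant affine chart around the polystable point $x$ and checking that the linearizations are compatible.
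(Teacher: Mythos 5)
Your proposal is correct and follows essentially the same route as the paper, which simply cites Bellamy--Schedler (their Theorem 3.8) for the slice statement that you reconstruct via Luna's theorem and the $\Ext^1_{\Pi_\lambda}$ computation, and whose proof only writes out the flatness transfer. On that last point the paper is slightly more careful than your appeal to ``openness of flatness under the \'etale slice'': since $\mu$ and $\hat\mu$ have different targets ($\mathfrak{g}(\vv)^*$ versus $\mathfrak{g}(\vv)_x^*$) and sources, it factors $\mu|_C$ through the \'etale map $G(\vv)\times^{G(\vv)_x}C\to T^*R(Q,\vv)$ and then uses the splitting $C=C^{\perp}\oplus W$ to write $\hat\mu\circ p=\mu_x|_C$, before concluding by $\mathbb C^{\times}$-homogeneity exactly as you do.
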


\begin{proof}
The statement of \' etale transverse slice and local isomorphism are essentially \cite[Theorem 3.8]{bellamy2016symplectic}, and the argument in \textit{loc. cit.} works at the full scheme-theoretic level, i.e. not just for underlying reduced scheme structure. And we also emphasize that their argument works for general $\lambda\in Z$, not just for $\lambda\in \mathbb R^{Q_0}\cap Z$. We only need to show that if $\mu$ is flat at $x$, then $\hat\mu$ is flat.

Recall some of the ingredients of the construction of \' etale transverse slice. Since the stabilizer $G(\vv)_x$ is reductive by Matsushima's theorem \cite{luna1973slices}, there is $G(\vv)_x$-stable complement $L$ of $\mathfrak{g}(\vv)_x$ in $\mathfrak{g}(\vv)$.  By \cite[Lemma 4.1]{crawley2001normality}, the $G(\vv)_x$-submodule $\mathfrak{g}(\vv)\cdot x\subset T^*R(Q,\vv)$ is isotropic, and by \cite[Corollary 2.3]{crawley2001normality}, there exists a coisotropic $G(\vv)_x$-module complement $C$ to $\mathfrak{g}(\vv)\cdot x$ in $T^*R(Q,\vv)$. Let $W=(\mathfrak{g}(\vv)\cdot x)^{\perp}\cap C$. The composition of $\mu: T^*R(Q,\vv)\to \mathfrak{g}(\vv)^*$ with the restriction map $\mathfrak{g}(\vv)^*\to \mathfrak{g}(\vv)^*_x$ is denoted by $\mu_x$, and the restriction of $\mu_x$ to $W$ is denoted by $\hat{\mu}$. Then there is an identification $W\cong T^*R(\hat{Q},\hat{\vv})$, with the moment map being $\hat{\mu}$.

Note that the natural map $\eta: G(\vv)\times ^{G(\vv)_x} C\to T^*R(Q,\vv)$ is \' etale at $(1,0)\in G(\vv)\times ^{G(\vv)_x} C$, therefore the composition $\mu\circ\eta$ is flat at $(1,0)$. Since $\mu|_C$ agrees with the composition $\mu\circ\eta\circ \iota$, where $\iota:C=G(\vv)_x\times ^{G(\vv)_x} C\hookrightarrow G(\vv)\times ^{G(\vv)_x} C$ is the natural embedding, we see that $\mu|_C$ is flat at $0\in C$, thus $\mu_x|_C$ is flat at $0\in C$. According to \cite[Lemma 4.7]{crawley2001normality} we have $C=C^{\perp}\oplus W$ so there is a projection map $p:C\to W$, and it is easy to see that $\hat{\mu}\circ p=\mu_x|_C$, thus $\hat{\mu}$ is flat at $0\in W$. Then it follows that $\hat{\mu}$ is flat by the $\mathbb C^{\times}$-equivariance of $\hat{\mu}$, here the $\mathbb C^{\times}$ acts on the doubling of $\hat{Q}$ by scaling all arrows with weight one.

\end{proof}

\begin{remark}
$(\hat{Q},\hat{\mathbf{v}})$ is determined as following. Suppose that $x$ decomposes as
$$x=x_1^{\oplus k_1}\oplus\cdots\oplus x_r^{\oplus k_r}$$where $x_i$ are non-isomorphic $\theta$-stable representations, and let $\mathbf{v}^{(t)}$ be the dimension vector of $x_t$. Then the vertex set of $\hat{Q}$ is $\{1,\cdots,r\}$ with dimension vector $\hat{\mathbf{v}}_t=k_t,t\in \hat{Q}_0$
The adjacency matrix of $\hat{Q}$ is determined from its Euler form $(-,-)_{\hat{Q}}$, i.e. the sum of adjacency matrix and its transpose, which is 
\begin{align}
    (\hat\ee_t,\hat\ee_u)_{\hat{Q}}=2\delta_{tu}-\mathbf{v}^{(t)}\cdot C_Q\mathbf{v}^{(u)}.
\end{align}
Here $\hat\ee_t$ is the dimension vector on $\hat Q$ such that it is $1$ at vertex $t$ and zero elsewhere. As a corollary, we have
\begin{align}\label{eqn: equation of dimension}
    \hat\pp\left(\sum_{t}k_t\hat\ee_t\right)=\pp\left(\sum_{t}k_t\vv^{(t)}\right).
\end{align}
\end{remark}

\begin{lemma}\label{lem: characterization of flatness}
For a quiver $(Q,\vv)$, the reduced scheme $\mathcal{M}^{\theta}_{\lambda}(Q,\mathbf{v})_{\mathrm{red}}$ is normal for all $(\theta,\lambda)$. Moreover, following statements are equivalent:
\begin{itemize}
    \item[(1)] The moment map $\mu$ is flat along $\mu^{-1}(\lambda)^{\theta\mathrm{-ss}}$.
    \item[(2)] For every decomposition $\vv=\sum_{t=1}^rk_t\vv_t$ such that there exists mutually non-isomorphic $\theta$-stable representations $x_t$ of dimension $\vv_t$ and $\lambda\cdot\vv_t=0$, then 
    $$\pp(\vv)\ge \sum_{t=1}^r k_t\pp(\vv_t).$$
    \item[(3)] $\dim\mathcal{M}^{\theta}_{\lambda}(Q,\mathbf{v})=2\pp (\vv).$
\end{itemize}
\end{lemma}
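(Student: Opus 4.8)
The plan is to reduce everything—both the normality assertion and the chain of equivalences—to the affine slice quiver $(\hat Q,\hat\vv)$ furnished by Lemma \ref{lem: transverse slice}, where Crawley-Boevey's results (Lemmas \ref{lemma: flatness} and \ref{Lemma: Normality}) apply directly, and to exploit the stratification of $\mathcal{M}^{\theta}_\lambda(Q,\vv)$ by representation type. For normality I would argue pointwise: all rings in sight are of finite type over $\bC$, hence excellent, so normality of the local ring at a closed point $x$ may be tested on its completion, and passage to the reduced structure commutes with completion. Every point of a GIT quotient is the image of a closed orbit, so Lemma \ref{lem: transverse slice} gives $\mathcal{O}^{\wedge}_{\mathcal{M}^{\theta}_\lambda(Q,\vv),x}\cong\mathcal{O}^{\wedge}_{\mathcal{M}_0(\hat Q,\hat\vv),0}$; since $\mathcal{M}_0(\hat Q,\hat\vv)_{\mathrm{red}}$ is normal by Lemma \ref{Lemma: Normality}, so is $\mathcal{M}^{\theta}_\lambda(Q,\vv)_{\mathrm{red}}$ at $x$, for every $x$.

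For $(2)\Leftrightarrow(3)$ I would stratify $\mathcal{M}^{\theta}_\lambda(Q,\vv)$ by locally closed pieces $S_\tau$ indexed by representation types $\tau:\vv=\sum_t k_t\vv_t$, where $\vv_t$ is the dimension vector of a $\theta$-stable $x_t$ with $\lambda\cdot\vv_t=0$ and the $x_t$ are pairwise non-isomorphic. Because the stable locus $\mathcal{M}^{\theta\mathrm{-s}}_\lambda(Q,\vv_t)$ is smooth of dimension $2\pp(\vv_t)$, one obtains $\dim S_\tau=2\sum_t k_t\pp(\vv_t)$, and as there are finitely many such $\tau$ covering the quotient, $\dim\mathcal{M}^{\theta}_\lambda(Q,\vv)=\max_\tau\dim S_\tau$. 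The fibre bound $\dim\mu^{-1}(\lambda)^{\theta\mathrm{-ss}}\ge\dim T^*R(Q,\vv)-\dim\g(\vv)$ combined with $\dim(\text{generic closed orbit})\le\dim G(\vv)$ yields the unconditional lower bound $\dim\mathcal{M}^{\theta}_\lambda(Q,\vv)\ge 2\pp(\vv)$. Hence $(2)$, which reads $\dim S_\tau\le 2\pp(\vv)$ for all $\tau$, is equivalent to $\dim\mathcal{M}^{\theta}_\lambda(Q,\vv)\le 2\pp(\vv)$, which with the lower bound is exactly $(3)$.

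For $(1)\Leftrightarrow(3)$ I would convert flatness into a fibre-dimension statement by miracle flatness: since $T^*R(Q,\vv)$ is Cohen--Macaulay and $\g(\vv)^*$ is regular, $\mu$ is flat at a point iff the fibre through it has the expected dimension $e_Q:=\dim T^*R(Q,\vv)-\dim\g(\vv)$, and one checks $e_Q=2\pp(\vv)+\dim G(\vv)$. Flatness along $\mu^{-1}(\lambda)^{\theta\mathrm{-ss}}$ is equivalent to flatness at every closed orbit $x$, since the flat locus is open and $G(\vv)$-invariant and meets the orbit of any semistable point whose closure contains $x$. At such an $x$ the étale slice identifies $\mu^{-1}(\lambda)$ near $x$ with $G(\vv)\times^{G(\vv)_x}\hat\mu^{-1}(0)$, giving
\[
\dim_x\mu^{-1}(\lambda)=\dim G(\vv)-\dim G(\vv)_x+\dim_0\hat\mu^{-1}(0),
\]
so that, using $\eqref{eqn: equation of dimension}$ and $\dim G(\vv)_x=\dim G(\hat\vv)$, one finds $\dim_x\mu^{-1}(\lambda)=e_Q$ iff $\dim_0\hat\mu^{-1}(0)=e_{\hat Q}$; thus $\mu$ is flat at $x$ iff $\hat\mu$ is flat (at $0$, hence everywhere, by conicity of $\hat\mu$). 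By Lemma \ref{lemma: flatness} for $\hat Q$ and the affine case of the stratification argument, $\hat\mu$ is flat iff $\dim\mathcal{M}_0(\hat Q,\hat\vv)=2\hat\pp(\hat\vv)=2\pp(\vv)$, while the slice isomorphism gives $\dim\mathcal{M}_0(\hat Q,\hat\vv)=\dim_x\mathcal{M}^{\theta}_\lambda(Q,\vv)$. Therefore $\mu$ is flat at every closed orbit iff $\dim_x\mathcal{M}^{\theta}_\lambda(Q,\vv)=2\pp(\vv)$ for all $x$, which—because each $\dim_x\ge 2\pp(\vv)$ and $\dim\mathcal{M}^{\theta}_\lambda(Q,\vv)=\max_x\dim_x$—is precisely $(3)$.

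The step I expect to be the main obstacle is $(1)\Leftrightarrow(3)$, because flatness of $\mu$ is a scheme-theoretic condition on the fibres that is not visible on the GIT quotient, and it does \emph{not} pull back through the slice isomorphism in any naive way. The resolution is to never transport flatness directly, but to pass through fibre dimensions via miracle flatness applied symmetrically to $\mu$ and to $\hat\mu$, using the pre-quotient dimension identity above; this device also absorbs the delicate case in which $\vv$ admits no $\theta$-stable representation, so that the generic point of $\mathcal{M}^{\theta}_\lambda(Q,\vv)$ is strictly semistable and the crude count ``fibre dimension minus group dimension'' fails—precisely the situation that the reduction to the affine slice quiver handles cleanly.
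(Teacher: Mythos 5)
Your overall strategy---reduce everything to the slice quiver $(\hat Q,\hat\vv)$ of Lemma \ref{lem: transverse slice} and to fibre-dimension counts---is the same as the paper's for the normality claim and for $(1)\Leftrightarrow(3)$. But your proof of $(2)\Leftrightarrow(3)$ rests on a wrong dimension formula, and the error is load-bearing. The stratum $S_\tau$ of representation type $\tau=(k_1,\vv_1;\dots;k_r,\vv_r)$ is parametrized by the choice of the pairwise non-isomorphic $\theta$-stable summands $x_1,\dots,x_r$ (the multiplicities $k_t$ are fixed by $\tau$ and contribute no moduli), so $\dim S_\tau=2\sum_t\pp(\vv_t)$, \emph{not} $2\sum_t k_t\pp(\vv_t)$. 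Consequently ``$\dim S_\tau\le 2\pp(\vv)$ for all $\tau$'' only says $\sum_t\pp(\vv_t)\le\pp(\vv)$, which is strictly weaker than $(2)$ whenever some $k_t>1$ and $\pp(\vv_t)>0$ (already for the Jordan quiver with $\vv=2$ the type $(2,\ee_1)$ has a $2$-dimensional stratum, not the $4$ your formula predicts). So your implication $(3)\Rightarrow(2)$ does not go through; your $(2)\Rightarrow(3)$ survives with the corrected formula, since $\pp(\vv_t)\ge 0$ for roots gives $\sum_t\pp(\vv_t)\le\sum_t k_t\pp(\vv_t)$.

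The missing idea is that the multiplicities $k_t$ must enter through the \emph{slice quiver}, not through stratum dimensions: at $x=x_1^{\oplus k_1}\oplus\cdots\oplus x_r^{\oplus k_r}$ the slice quiver has dimension vector $\hat\vv=\sum_t k_t\hat\ee_t$, and the paper deduces $(2)$ from $(3)$ by noting that $(3)$ forces $\dim\mathcal{M}_0(\hat Q,\hat\vv)=2\hat\pp(\hat\vv)$, then invoking Crawley-Boevey's theorem that for affine quotients this dimension equality already implies flatness of $\hat\mu$, whence Lemma \ref{lemma: flatness} applied to $\hat Q$ gives $\hat\pp(\hat\vv)\ge\sum_t k_t\hat\pp(\hat\ee_t)=\sum_t k_t\pp(\vv_t)$. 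This citation of the affine-case theorem (``$\dim=2\pp$ implies flat'') is also the ingredient you wave at as ``the affine case of the stratification argument'' in your $(1)\Leftrightarrow(3)$ step; it is a genuine theorem of Crawley-Boevey and cannot be recovered from your stratification as written. Two smaller points: your ``unconditional lower bound'' $\dim\mathcal{M}^{\theta}_{\lambda}(Q,\vv)\ge 2\pp(\vv)$ needs the bound $\dim\xi^{-1}(S_\tau)\le \vv\cdot\vv-1+\pp(\vv)+\sum_t\pp(\vv_t)$ (the generalization of Crawley-Boevey's Corollary 6.4 used in the paper's $(2)\Rightarrow(1)$), since fibres of the quotient map are unions of non-closed orbits and can exceed $\dim G(\vv)$ in dimension; and for normality you should cite Crawley-Boevey's normality theorem for $\mathcal{M}_0(\hat Q,\hat\vv)$ directly rather than Lemma \ref{Lemma: Normality}, whose hypotheses you have not verified for the slice quiver.
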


\begin{proof}
In the view of Lemma \ref{lem: transverse slice}, for every point $x\in \mathcal{M}^{\theta}_{\lambda}(Q,\mathbf{v})$, the formal completion $\mathcal{M}^{\theta}_{\lambda}(Q,\mathbf{v})^{\wedge}_x $ is isomorphic to $\mathcal{M}_{0}(\hat Q,\hat\vv)^{\wedge}_0$ of some quiver $(\hat Q,\hat\vv)$, then the normality of $\mathcal{M}^{\theta}_{\lambda}(Q,\mathbf{v})_{\mathrm{red}}$ follows from \cite[Theorem 1.1]{crawley2001normality}.

$(1)\Rightarrow (3)$: For every point $x\in \mu^{-1}(\lambda)^{\theta\mathrm{-ss}}$ with closed $G(\vv)$-orbit, we attach the quiver $(\hat Q,\hat \vv)$ to it, and by Lemma \ref{lem: transverse slice} $\hat \mu$ is flat, so we have $$\dim \mathcal{M}^{\theta}_{\lambda}(Q,\mathbf{v})^{\wedge}_x=\dim\mathcal{M}_{0}(\hat Q,\hat\vv)^{\wedge}_0=2\hat\pp (\hat\vv)=2\pp (\vv).$$Here we use the equation \eqref{eqn: equation of dimension}.

$(3)\Rightarrow (2)$: Suppose there is a decomposition $\vv=\sum_{t=1}^rk_t\vv_t$ such that there exists mutually non-isomorphic $\theta$-stable representations $x_t$ of dimension $\vv_t$ and $\lambda\cdot\vv_t=0$, then let $x=x_1^{\oplus k_1}\oplus\cdots\oplus x_r^{\oplus k_r}$ be a point in $\mu^{-1}(\lambda)^{\theta\mathrm{-ss}}$ with closed $G(\vv)$-orbit, and we attach the quiver $(\hat Q,\hat \vv)$ to it, then we have $\dim\mathcal{M}_{0}(\hat Q,\hat\vv)^{\wedge}_0=2\hat\pp (\hat\vv)$, therefore $\dim\mathcal{M}_{0}(\hat Q,\hat\vv)=2\hat\pp (\hat\vv)$. By \cite[Theorem 1.3]{crawley2001geometry} we see that $\hat{\mu}$ is flat, thus by \cite[Theorem 1.1]{crawley2001geometry} we have
\begin{align*}
    \pp (\vv)=\hat\pp (\hat\vv)\ge \sum_{t=1}^rk_t \hat\pp (\hat\ee_t)=\sum_{t=1}^r k_t\pp(\vv_t).
\end{align*}

$(2)\Rightarrow (1)$: Generalizing \cite[Corollary 6.4]{crawley2001normality} to $\theta$-stable representations, of which the proof follows verbatim as \textit{loc. cit.}, we have
\begin{align*}
    \dim \xi^{-1}(\mathcal{M}^{\theta}_{\lambda}(Q,\mathbf{v})_{\tau})\le \vv\cdot\vv-1+\pp(\vv)+\sum_{t=1}^r\pp(\vv_i),
\end{align*}
Here $\tau=(k_1,\vv_1;\cdots;k_r,\vv_r)$ is a representation type such that $\vv_i\cdot\theta=\vv_i\cdot\lambda=0$, and $\mathcal{M}^{\theta}_{\lambda}(Q,\mathbf{v})_{\tau}$ is the locus of representation type $\tau$, and $\xi: \mu^{-1}(\lambda)^{\theta\mathrm{-ss}}\to \mathcal{M}^{\theta}_{\lambda}(Q,\mathbf{v})$ is the quotient map. Therefore we have $\dim\mu^{-1}(\lambda)^{\theta\mathrm{-ss}}\le \vv\cdot\vv-1+2\pp(\vv)$. On the other hand $\mu^{-1}(\lambda)$ is a fiber of morphism between two smooth schemes of relative dimension $\vv\cdot\vv-1+2\pp(\vv)$, every irreducible component of $\mu^{-1}(\lambda)$ has dimension $\ge \vv\cdot\vv-1+2\pp(\vv)$. Since $\mu^{-1}(\lambda)^{\theta\mathrm{-ss}}$ is open in $\mu^{-1}(\lambda)$, this forces $\dim\mu^{-1}(\lambda)^{\theta\mathrm{-ss}}= \vv\cdot\vv-1+2\pp(\vv)$, and therefore $\mu$ is flat along $\mu^{-1}(\lambda)^{\theta\mathrm{-ss}}$ by Miracle flatness theorem.
\end{proof}

An immediate consequence of Lemma \ref{lem: transverse slice} is the following. 
\begin{proposition}\label{prop: special case implies general case}
Theorem \ref{thm: reducedness} implies Theorem \ref{thm: reducedness_enhanced}.
\end{proposition}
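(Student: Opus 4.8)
The plan is to deduce Theorem \ref{thm: reducedness_enhanced} from Theorem \ref{thm: reducedness} by passing to the formal completion at an arbitrary closed point and applying the transverse slice isomorphism of Lemma \ref{lem: transverse slice}. Reducedness is a local property that can be checked on the local rings, and in fact on their completions (since a Noetherian local ring is reduced if and only if its completion is reduced — completion is faithfully flat, so nilpotents cannot be created or destroyed in a way that changes reducedness here, using that the local rings of a finite-type scheme are excellent). So it suffices to show that $\mathcal{M}^{\theta}_{\lambda}(Q,\mathbf{v})^{\wedge}_x$ is reduced for every closed point $x$.

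First I would fix a closed point $x\in \mathcal{M}^{\theta}_{\lambda}(Q,\mathbf{v})$, which corresponds to a closed $G(\vv)$-orbit in $\mu^{-1}(\lambda)^{\theta\mathrm{-ss}}$ at which, by hypothesis, $\mu$ is flat. Lemma \ref{lem: transverse slice} then supplies a quiver $(\hat{Q},\hat{\vv})$ together with an isomorphism of formal completions
\begin{align*}
    \mathcal{M}^{\theta}_{\lambda}(Q,\mathbf{v})^{\wedge}_x \cong \mathcal{M}_{0}(\hat{Q},\hat{\vv})^{\wedge}_0,
\end{align*}
and moreover, since $\mu$ is flat at $x$, the same lemma guarantees that the new moment map $\hat{\mu}$ is flat. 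The point of moving to $(\hat{Q},\hat{\vv})$ is that we land in the situation governed by Theorem \ref{thm: reducedness}: there the stability parameter is the special value $0$ and the level is $0\in Z$, and flatness of $\hat\mu$ is exactly the global hypothesis of that theorem (flatness everywhere, not merely along a semistable locus).

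Next I would invoke Theorem \ref{thm: reducedness} for the quiver $(\hat{Q},\hat{\vv})$ with $(\theta,\lambda)=(0,0)$: since $\hat\mu$ is flat, $\mathcal{M}_{0}(\hat{Q},\hat{\vv})$ is reduced. Reducedness of a scheme implies reducedness of the local ring at any point, and hence of its completion; thus $\mathcal{M}_{0}(\hat{Q},\hat{\vv})^{\wedge}_0$ is reduced. Transporting this through the isomorphism above shows that $\mathcal{M}^{\theta}_{\lambda}(Q,\mathbf{v})^{\wedge}_x$ is reduced, and therefore so is the local ring at $x$. As $x$ was arbitrary, $\mathcal{M}^{\theta}_{\lambda}(Q,\mathbf{v})$ is reduced, proving Theorem \ref{thm: reducedness_enhanced}.

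The substantive content is entirely packaged in Lemma \ref{lem: transverse slice}, so there is no real obstacle at the level of this reduction; the only point requiring care is the descent of reducedness between a scheme, its local ring, and the completion. The safe direction (reduced completion $\Rightarrow$ reduced local ring, by faithful flatness of completion) is exactly the one I use, so no excellence hypothesis is even needed in that direction — faithful flatness suffices to conclude that the map from a local ring into its completion is injective, killing any would-be nilpotents. Hence the proof amounts to little more than chaining Lemma \ref{lem: transverse slice} with Theorem \ref{thm: reducedness} and this elementary completion argument.
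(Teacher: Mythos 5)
Your proof is correct and follows essentially the same route as the paper: apply Lemma \ref{lem: transverse slice} at a point with closed orbit, use Theorem \ref{thm: reducedness} for $(\hat Q,\hat\vv)$, and transport reducedness back through the formal completion. The only cosmetic remark is that your closing paragraph claims you use only the direction ``reduced completion $\Rightarrow$ reduced local ring,'' whereas you in fact also need ``reduced local ring $\Rightarrow$ reduced completion'' for $\mathcal{M}_{0}(\hat Q,\hat\vv)$ at $0$ (which you correctly justified earlier via excellence of finite-type local rings), so the argument is complete as written.
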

\begin{proof}
Assume that Theorem \ref{thm: reducedness} holds. By Lemma \ref{lem: transverse slice} there is an isomorphism $\mathcal{M}^{\theta}_{\lambda}(Q,\mathbf{v})^{\wedge}_x   \cong \mathcal{M}_{0}(\hat Q,\hat\vv)^{\wedge}_0$ of schemes, and $\hat{\mu}$ is flat for the quiver $(\hat{Q},\hat{\vv})$ so $\mathcal{M}_{0}(\hat Q,\hat\vv)$ is reduced, and thus $\mathcal{M}^{\theta}_{\lambda}(Q,\mathbf{v})$ is reduced at $x$. Let $x$ runs through the set of points in $\mu^{-1}(\lambda)^{\theta\mathrm{-ss}}$ with closed $G(\vv)$-orbits, this set maps surjectively to $\mathcal{M}^{\theta}_{\lambda}(Q,\mathbf{v})$ by geometric invariant theory, hence $\mathcal{M}^{\theta}_{\lambda}(Q,\mathbf{v})$ is reduced.
\end{proof}

\subsection{Generic parameters}\label{subsec: Generic parameters}

\begin{definition}\label{def: generic stability}
We show that $(\theta,\lambda)\in \Theta\times Z$ is \textit{generic} if 
\begin{align}\label{eqn: generic stability}
    (\theta,\lambda)\in (\Theta\times Z)\bigg\backslash \bigcup _{\substack{\vv'\in \mathbb Z^{Q_0}\\ \vv\not\propto \vv'}}^{0<\mathbf{v}'<\mathbf{v}}\vv^{'\perp}.
\end{align}
Here the partial order on $\mathbb Z^{Q_0}$ is such that $\mathbf{v}^{(1)}\le\mathbf{v}^{(2)}$ if $\mathbf{v}^{(2)}-\mathbf{v}^{(1)}\in \mathbb N^{Q_0}$. The set of generic $(\theta,\lambda)$ is denoted by $(\Theta\times Z)_0$, and we use the notation $\Theta_0$ (resp. $Z_0$) to denote the intersection of $(\Theta\times Z)_0$ with $\Theta\times \{0\}$ (resp. $\{0\}\times Z$). If $\theta\in \Theta_0$ (resp. $\lambda\in Z_0$) we show that it is generic.
\end{definition}

\begin{remark}\label{rmk: smoothness}
It is well-known that $\mathcal{M}^{\theta}_{\lambda}(Q,\mathbf{v})$ is smooth if $(\theta,\lambda)$ is generic and $\vv$ is indivisible, i.e. $\nexists \vv'\in \mathbb N^{Q_0},k\in \mathbb Z_{>1}$ such that $\vv=k\vv'$. This can be proved as following. If $(\theta,\lambda)$ satisfies \eqref{eqn: generic stability}, then every $\theta$-semistable representation in $\mu^{-1}(\lambda)$ is $\theta$-stable, i.e.
$$\mu^{-1}(\lambda)^{\theta\mathrm{-ss}}=\mu^{-1}(\lambda)^{\theta\mathrm{-s}},$$
because the dimension vector of a proper sub-representation, denoted by $\mathbf{v}'$, must be orthogonal to $\lambda$, therefore $\theta\cdot\lambda\neq 0$, thus the representation is $\theta$-stable, and henceforth the stability implies that $\mu$ is smooth along $\mu^{-1}(\lambda)$ and the action of $G(\vv)$ is free. This proves the smoothness result.
\end{remark}

\subsection{Relation between different stability conditions}\label{subsec: Relation between different stability conditions}

Denote the quiver scheme with zero stability by $\mathcal M_{Z}(Q,\vv)$ and $\mathcal M_{\lambda}(Q,\vv)$. Then there is a projective morphism
\begin{align}\label{eqn: projective morphism}
    p^{\theta}: \mathcal{M}^{\theta}_{Z}(Q,\mathbf{v})\longrightarrow \mathcal{M}_{Z}(Q,\mathbf{v}).
\end{align}
\begin{lemma}
$p^{\theta}|_{Z_0}:\mathcal{M}^{\theta}_{Z}(Q,\mathbf{v})|_{Z_0}\longrightarrow \mathcal{M}_{Z}(Q,\mathbf{v})|_{Z_0}$ is isomorphism.
\end{lemma}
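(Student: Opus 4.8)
The plan is to show that over the generic locus $Z_0$ the $\theta$-semistable quotient and the affine quotient parametrize the very same representations, so that the projective morphism $p^{\theta}$ degenerates to an isomorphism. Write $X=\mu^{-1}(Z_0)$ for the preimage over $Z_0\subseteq Z$, with its $G(\vv)$-action, and let $\pi\colon X\to \mathcal{M}_Z(Q,\vv)|_{Z_0}$ be the affine GIT quotient, so that $\mathcal{M}^{\theta}_Z(Q,\vv)|_{Z_0}$ is the associated $\theta$-semistable quotient and $p^{\theta}$ is the induced map. The first step is to record what genericity buys. For $\lambda\in Z_0$ the pair $(\theta,\lambda)$ satisfies \eqref{eqn: generic stability} for the given $\theta$, because $\lambda\cdot\vv'\neq0$ already forces $(\theta,\lambda)\notin\vv'^{\perp}$ whenever $0<\vv'<\vv$ and $\vv'\not\propto\vv$; hence by Remark \ref{rmk: smoothness} every $\theta$-semistable point of each fibre $\mu^{-1}(\lambda)$ is $\theta$-stable, so $X^{\theta\mathrm{-ss}}=X^{\theta\mathrm{-s}}$. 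I would then identify these stable points module-theoretically: any proper subrepresentation of a point of $\mu^{-1}(\lambda)$ has dimension vector $\vv'$ orthogonal to $\lambda$, and genericity forces $\vv'\propto\vv$, whence $\theta\cdot\vv'=0$; since $\theta$-stability requires $\theta\cdot\vv'<0$ on every proper subobject, a $\theta$-stable representation can have no proper subrepresentation and is therefore a simple module over the deformed preprojective algebra. Consequently its $G(\vv)$-orbit is closed in all of $X$ and its stabiliser is the centre, so $G(\vv)$ acts on $X^{\theta\mathrm{-s}}$ with closed, essentially free orbits.

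Next I would run a geometric-invariant-theory comparison to upgrade this to a scheme isomorphism. The key point is that $X^{\theta\mathrm{-s}}$ is saturated with respect to $\pi$: if $\pi(x')=\pi(x)$ with $x$ stable, then the unique closed orbit in the closure of $G(\vv)\cdot x'$ is $G(\vv)\cdot x$, so the semisimplification of $x'$ is the simple module $x$; comparing dimension vectors ($\dim x'=\dim x=\vv$) forces $x'\cong x$, whence $x'\in X^{\theta\mathrm{-s}}$. Saturation of the open set $X^{\theta\mathrm{-s}}$ implies that $\pi(X^{\theta\mathrm{-s}})$ is open in $\mathcal{M}_Z(Q,\vv)|_{Z_0}$ and that $\pi$ restricts to a good quotient $X^{\theta\mathrm{-s}}\to\pi(X^{\theta\mathrm{-s}})$; as all orbits there are closed it is a geometric quotient, and by uniqueness of geometric quotients it is identified with $\mathcal{M}^{\theta}_Z(Q,\vv)|_{Z_0}$, under which $p^{\theta}$ becomes the open immersion $\pi(X^{\theta\mathrm{-s}})\hookrightarrow\mathcal{M}_Z(Q,\vv)|_{Z_0}$. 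Crucially this identification is at the full scheme-theoretic level, so no reducedness hypothesis is used. It then remains to check surjectivity: a closed point of $\mathcal{M}_Z(Q,\vv)|_{Z_0}$ is a semisimple module $\bigoplus S_i^{\oplus m_i}$ over some $\lambda\in Z_0$ with each $\dim S_i$ orthogonal to $\lambda$, and genericity again forces each $\dim S_i\propto\vv$; for indivisible $\vv$ (the situation of Remark \ref{rmk: smoothness}) this gives a single simple summand, so the module is simple, hence $\theta$-stable, hence already in $\pi(X^{\theta\mathrm{-s}})$. Therefore $p^{\theta}$ is an isomorphism.

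The step I expect to be most delicate is exactly the passage from a bijection of points to an isomorphism of schemes, since the schemes here may be non-reduced — this is the whole theme of the paper — so the usual route via normality and Zariski's main theorem (proper, quasi-finite, birational onto a normal target) is unavailable. The saturation argument above is designed precisely to bypass it, deducing the scheme isomorphism directly from the identification of $\mathcal{M}^{\theta}_Z(Q,\vv)|_{Z_0}$ with the geometric quotient of the saturated open $X^{\theta\mathrm{-s}}$. A secondary point requiring care is that the entire discussion is relative over $Z_0$: stability, closedness and freeness of orbits, and saturation must all be controlled uniformly across the family, and one must verify that $\pi(X^{\theta\mathrm{-s}})$ is genuinely open (not merely constructible) in the family — which holds because good quotients are submersive and carry saturated opens to opens.
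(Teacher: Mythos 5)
Your argument has a genuine gap: it only proves the lemma for indivisible $\vv$, whereas the statement (and its later uses, e.g.\ in the proof of Lemma \ref{lemma: generic case}, which explicitly treats divisible $\vv=k\ww$) makes no such assumption. The gap enters at two places. First, you invoke Remark \ref{rmk: smoothness} to conclude $X^{\theta\mathrm{-ss}}=X^{\theta\mathrm{-s}}$, but that remark assumes $\vv$ indivisible; your own computation shows that every subrepresentation of a point of $\mu^{-1}(\lambda)$, $\lambda\in Z_0$, has dimension vector $\vv'\propto\vv$ and hence $\theta\cdot\vv'=0$, which gives semistability of everything but \emph{not} stability. For $\vv=k\ww$ with $k>1$ a direct sum $S_1\oplus\cdots\oplus S_k$ of simples of dimension $\ww$ is $\theta$-semistable and not $\theta$-stable, so $X^{\theta\mathrm{-ss}}\neq X^{\theta\mathrm{-s}}$. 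Second, your surjectivity step openly restricts to indivisible $\vv$: for divisible $\vv$ the closed point of $\mathcal{M}_Z(Q,\vv)|_{Z_0}$ corresponding to such a decomposable semisimple module is not in the image of the stable locus, so the saturation argument does not cover it.

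The intended proof is both shorter and unconditional: for $\lambda\in Z_0$ every representation in $\mu^{-1}(\lambda)$ is automatically $\theta$-semistable (by exactly the computation you performed: subrepresentations are $\lambda$-orthogonal, genericity forces $\vv'\propto\vv$, and $\theta\cdot\vv=0$ gives $\theta\cdot\vv'=0$). Hence $\mu^{-1}(Z)^{\theta\mathrm{-ss}}|_{Z_0}=\mu^{-1}(Z_0)$, so $\mathcal{M}^{\theta}_Z(Q,\vv)|_{Z_0}$ and $\mathcal{M}_Z(Q,\vv)|_{Z_0}$ are both good quotients of the same $G(\vv)$-scheme, and $p^{\theta}$ is the canonical identification by uniqueness of good (categorical) quotients. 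This works at the full scheme-theoretic level and needs neither the stable locus nor any indivisibility hypothesis. You should replace the stable-locus/saturation mechanism with this observation; the rest of your analysis of stable points is correct but is not needed here.
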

\begin{proof}
If $\lambda\in Z_0$, then every representation in $\mu^{-1}(\lambda)$ is automatically $\theta$-semistable.
\end{proof}

\begin{lemma}\label{lemma: generic case}
If the moment map $\mu$ is flat, and $(\theta,\lambda)$ is generic, and moreover assume that either $\lambda\in Z_0$ or $\lambda\in \mathbb R^{Q_0}$, then $\mathcal{M}^{\theta}_{\lambda}(Q,\mathbf{v})$ is reduced and irreducible.
\end{lemma}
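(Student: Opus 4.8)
The plan is to show that under the stated hypotheses the dimension vector $\vv$ lies in the set $\Sigma_\lambda$ of Crawley-Boevey (respectively $\Sigma_{\lambda,\theta}$ of Bellamy-Schedler), and then to invoke their reducedness results directly. Concretely, when $\lambda\in Z_0$ I would first apply the isomorphism $p^\theta|_{Z_0}$ from the lemma above to replace $\mathcal M^\theta_\lambda(Q,\vv)$ by the affine quiver scheme $\mathcal M_\lambda(Q,\vv)$, reducing to the $\theta=0$ situation treated in \cite{crawley2001geometry}; when $\lambda\in\bR^{Q_0}$ I would work with $\theta$-stability throughout and appeal to \cite{bellamy2016symplectic}. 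In both cases, once $\vv\in\Sigma_\lambda$ (resp.\ $\vv\in\Sigma_{\lambda,\theta}$) is established, \textit{loc.\ cit.}\ gives that $\mathcal M^\theta_\lambda(Q,\vv)$ is reduced and that its locus of $\theta$-stable representations is a dense irreducible open subset; combined with the normality of $\mathcal M^\theta_\lambda(Q,\vv)_{\mathrm{red}}$ from Lemma \ref{lem: characterization of flatness}, this yields irreducibility.

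The heart of the argument is thus the claim that flatness together with genericity forces $\vv\in\Sigma_\lambda$ (resp.\ $\Sigma_{\lambda,\theta}$), and the first observation is that genericity collapses the relevant decompositions. Indeed, a summand appearing in a $\Sigma$-type decomposition is the dimension vector $\vv'$ of a $\theta$-stable representation with $\lambda\cdot\vv'=0$ (and $\theta\cdot\vv'=0$); if $0<\vv'<\vv$ and $\vv\not\propto\vv'$ this places $(\theta,\lambda)$ in $\vv'^{\perp}$, contradicting \eqref{eqn: generic stability}, so every such $\vv'$ is proportional to $\vv$. Writing $\vv=k\vv_0$ with $\vv_0$ primitive, every nontrivial admissible decomposition is therefore of the form $\vv=\sum_{i=1}^r j_i\vv_0$ with $r\ge 2$, $j_i\ge 1$ and $\sum_i j_i=k$. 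When $\vv$ is indivisible there are no such decompositions and $\vv\in\Sigma_\lambda$ holds vacuously, recovering the smoothness of Remark \ref{rmk: smoothness}.

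Next I would upgrade the flatness inequality to the strict inequality defining $\Sigma$. By Lemma \ref{lemma: flatness} flatness gives $\pp(\vv)\ge\sum_i\pp(j_i\vv_0)$, and a direct computation with $\pp(\mathbf w)=1-\tfrac12\mathbf w\cdot C_Q\mathbf w$ yields, for $q:=\tfrac12\vv_0\cdot C_Q\vv_0\in\bZ$,
\begin{align*}
    \pp(\vv)-\sum_{i=1}^r\pp(j_i\vv_0)=(1-r)-2q\sum_{i<l}j_i j_l .
\end{align*}
This is an integer which is $\ge 0$ by flatness. If it vanished we would have $q=\tfrac{1-r}{2\sum_{i<l}j_i j_l}$; but since $j_i\ge 1$ we have $2\sum_{i<l}j_i j_l\ge r(r-1)>r-1$ for $r\ge 2$, so this fraction lies in $(-1,0)$ and cannot be an integer --- a contradiction. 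Hence the difference is strictly positive for every nontrivial admissible decomposition, that is, $\vv\in\Sigma_\lambda$ (resp.\ $\vv\in\Sigma_{\lambda,\theta}$).

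The \emph{main obstacle}, and the reason for the two-part hypothesis on $\lambda$, is that the reducedness input is only available in the two regimes covered by the literature: the affine ($\theta=0$) complex case of Crawley-Boevey, reached for $\lambda\in Z_0$ via the isomorphism $p^\theta|_{Z_0}$, and the real-parameter $\theta$-stable case of Bellamy-Schedler for $\lambda\in\bR^{Q_0}\cap Z$. Within the proof itself the genuinely nontrivial step is the passage from the non-strict flatness inequality to the strict $\Sigma$-inequality; this is exactly where the integrality of $q$ (equivalently of $\pp$ on the root lattice) is used, and it is what excludes the borderline decompositions $\vv=\sum_i j_i\vv_0$ at which reducedness could otherwise fail.
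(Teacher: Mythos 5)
Your proposal is correct and follows essentially the same route as the paper: genericity reduces all relevant decompositions to ones with summands proportional to $\vv$, flatness gives the non-strict inequality, and the evenness/integrality of $\vv_0\cdot C_Q\vv_0$ upgrades it to the strict inequality defining $\Sigma_{\lambda,\theta}$, after which one concludes via Crawley--Boevey and Bellamy--Schedler. The only (cosmetic) differences are that you rule out equality by an integrality contradiction where the paper bounds the difference below by $(r-1)^2$, and that you cite the reducedness theorems of \emph{loc.\ cit.}\ directly where the paper re-derives reducedness from density of the stable locus together with generic smoothness and the Cohen--Macaulay property of $\mu^{-1}(\lambda)^{\theta\mathrm{-ss}}$.
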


\begin{proof}
If $\lambda\in Z_0$ then we can set $\theta$ equals to zero since $p^{\theta}: \mathcal{M}^{\theta}_{\lambda}(Q,\mathbf{v})\to \mathcal{M}_{\lambda}(Q,\mathbf{v})$ is isomorphism. Therefore we are in the setups of Bellamy and Schedler \cite[1.1]{bellamy2016symplectic}. We claim that $\vv\in \Sigma_{\lambda,\theta}$ (see \cite[Section 2]{bellamy2016symplectic} for notation), this means that for all decomposition $\mathbf{v}=\mathbf{v}^{(1)}+\cdots+\mathbf{v}^{(r)}$ into non-zero elements in $\mathbb N^{Q_0}$ such that $\forall t\in \{1,\cdots,r\}, \vv^{(t)}\cdot \lambda=\vv^{(t)}\cdot \theta=0$, the inequality
\begin{align}
    \pp(\vv)>\sum_{t=1}^r\pp(\vv^{(t)})
\end{align}
holds. Since $(\theta,\lambda)$ is generic by assumption, we only need to show the inequality when $\mathbf{v}$ is divisible and $\vv^{(t)}=k_t\ww$ for $k_t\in \mathbb N_{>0}$ and $\ww\in \mathbb N^{Q_0}$. By Lemma \ref{lemma: flatness}, we already have 
\begin{align*}
    \pp(\vv)\ge\sum_{t=1}^r\pp(\vv^{(t)}),
\end{align*}
and it remains to show that the inequality must be strict, i.e. equality never holds. In effect, we expand two sides the above inequality as functions of $\ww$:
\begin{align*}
    1-\frac{1}{2}\left(\sum_{t=1}^rk_t\right)^2\ww\cdot C_Q\ww\ge r-\frac{1}{2}\left(\sum_{t=1}^rk_t^2\right)\ww\cdot C_Q\ww,
\end{align*}
and conclude that $\ww\cdot C_Q\ww<0$. Since $\ww\cdot C_Q\ww$ is an even number, so $\ww\cdot C_Q\ww\le -2$. Hence we have
\begin{align*}
    \pp(\vv)-\sum_{t=1}^r\pp(\vv^{(t)})&\ge 1-r+\left(\sum_{t=1}^rk_t\right)^2-\left(\sum_{t=1}^rk_t^2\right)=1-r+2\sum_{1\le t<u\le r}k_tk_u\\
    &\ge 1-r+2{{r}\choose {2}}=(r-1)^2>0,
\end{align*}
and our claim is justified. Then \cite[Corollary 3.22]{bellamy2016symplectic} shows that the stable locus of $\mathcal{M}^{\theta}_{\lambda}(Q,\mathbf{v})$, denoted by $\mathcal{M}^{\theta}_{\lambda}(Q,\mathbf{v})^s$, is dense. And by \cite[Theorem 3.27]{bellamy2016symplectic}
\begin{align*}
    \dim \xi^{-1}(\mathcal{M}^{\theta}_{\lambda}(Q,\mathbf{v})\backslash \mathcal{M}^{\theta}_{\lambda}(Q,\mathbf{v})^s)< \dim \mu^{-1}(\lambda)^{\theta\mathrm{-ss}}.
\end{align*}
Here $\xi: \mu^{-1}(\lambda)^{\theta\mathrm{-ss}}\to \mathcal{M}^{\theta}_{\lambda}(Q,\mathbf{v})$ is the quotient map. Since $\mu^{-1}(\lambda)^{\theta\mathrm{-ss}}$ is Cohen-Macaulay hence equidimensional, this implies that the set of $\theta$-stable points in $\mu^{-1}(\lambda)^{\theta\mathrm{-ss}}$ is dense. Since $\mu$ is smooth at $\theta$-stable points, we conclude that $\mu^{-1}(\lambda)^{\theta\mathrm{-ss}}$ is generically smooth. Combined with the Cohen-Macaulay property, we see that $\mu^{-1}(\lambda)^{\theta\mathrm{-ss}}$ is reduced, thus $\mathcal{M}^{\theta}_{\lambda}(Q,\mathbf{v})$ is reduced.
\end{proof}

It is easy to see that $\forall \theta\in \Theta$ there exists $\theta'$ generic and in the Euclidean neighborhood of $\theta$ and such that 
\begin{align*}
    \mu^{-1}(Z)^{\theta'\mathrm{-ss}}\subset \mu^{-1}(Z)^{\theta\mathrm{-ss}}.
\end{align*}
Then there exists projective morphism
\begin{align}
    p^{\theta'}_{\theta}: \mathcal{M}^{\theta'}_{Z}(Q,\mathbf{v})\longrightarrow \mathcal{M}^{\theta}_{Z}(Q,\mathbf{v}).
\end{align}
and it is compatible with \eqref{eqn: projective morphism} since $p^{\theta'}=p^{\theta}\circ p^{\theta'}_{\theta}$. Note that $p^{\theta'}_{\theta}|_{Z_0}$ is isomorphism, therefore it is a birational morphism.

\begin{lemma}
If the moment map $\mu$ is flat along $\mu^{-1}(\lambda)^{\theta'\mathrm{-ss}}$, and moreover assume that  $\dim\mathcal{M}^{\theta'}_{\lambda}(Q,\mathbf{v})=\dim \mathcal{M}^{\theta}_{\lambda}(Q,\mathbf{v})$, then $\mu$ is flat along $\mu^{-1}(\lambda)^{\theta\mathrm{-ss}}$.
\end{lemma}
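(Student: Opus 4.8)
The plan is to reduce the claim entirely to the characterization of flatness already established in Lemma \ref{lem: characterization of flatness}. The essential point is that the quantity $2\pp(\vv)$ appearing there depends only on the dimension vector $\vv$ and the Cartan matrix $C_Q$, and in particular is completely independent of the stability parameter. Thus the equivalence (1)$\Leftrightarrow$(3) of Lemma \ref{lem: characterization of flatness}, which holds for every $(\theta,\lambda)$, lets us detect flatness along $\mu^{-1}(\lambda)^{\theta\mathrm{-ss}}$ purely through the numerical condition $\dim\mathcal{M}^{\theta}_{\lambda}(Q,\vv)=2\pp(\vv)$, and this same threshold $2\pp(\vv)$ can be compared across different stability parameters on a common footing.

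Concretely, I would argue in three short steps. First, apply the implication (1)$\Rightarrow$(3) of Lemma \ref{lem: characterization of flatness} to the stability parameter $\theta'$: since $\mu$ is flat along $\mu^{-1}(\lambda)^{\theta'\mathrm{-ss}}$ by hypothesis, we obtain $\dim\mathcal{M}^{\theta'}_{\lambda}(Q,\vv)=2\pp(\vv)$. Next, feed in the second hypothesis $\dim\mathcal{M}^{\theta'}_{\lambda}(Q,\vv)=\dim\mathcal{M}^{\theta}_{\lambda}(Q,\vv)$ to transport this equality to the parameter $\theta$, yielding $\dim\mathcal{M}^{\theta}_{\lambda}(Q,\vv)=2\pp(\vv)$. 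Finally, apply the reverse implication (3)$\Rightarrow$(1) of Lemma \ref{lem: characterization of flatness}, now for $\theta$, to conclude that $\mu$ is flat along $\mu^{-1}(\lambda)^{\theta\mathrm{-ss}}$, as desired.

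I do not anticipate any genuine obstacle here, since all of the geometric content — relating the dimension of the quiver scheme to the \'etale-local transverse slice models and to Crawley-Boevey's flatness criterion — has already been absorbed into Lemma \ref{lem: characterization of flatness}. The only point requiring a moment of care is that the characterization must be valid for an \emph{arbitrary} stability parameter and not merely for generic ones; this is indeed the case, as Lemma \ref{lem: characterization of flatness} is stated for all $(\theta,\lambda)$. Consequently the present lemma is a purely formal consequence of that characterization, and the proof amounts to invoking it twice, once for $\theta'$ and once for $\theta$, bridged by the stability-independence of $2\pp(\vv)$.
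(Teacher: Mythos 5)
Your proposal is correct and is exactly the argument the paper intends: the paper's proof consists of the single line ``Apply the equivalence $(1)\Leftrightarrow(3)$ in Lemma \ref{lem: characterization of flatness},'' and your three steps (use $(1)\Rightarrow(3)$ for $\theta'$, transport the equality $\dim\mathcal{M}^{\theta'}_{\lambda}(Q,\vv)=2\pp(\vv)$ to $\theta$ via the dimension hypothesis, then use $(3)\Rightarrow(1)$ for $\theta$) simply spell this out. Your observation that $2\pp(\vv)$ is independent of the stability parameter is precisely the point that makes the comparison across $\theta$ and $\theta'$ work.
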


\begin{proof}
Apply the equivalence $(1)\Leftrightarrow (3)$ in Lemma \ref{lem: characterization of flatness}.
\end{proof}

\begin{lemma}\label{lemma: normality of family implies reducedness}
If the moment map $\mu$ is flat, and moreover assume that $\mathcal M^{\theta}_Z(Q,\vv)$ is normal, then $\mathcal{M}^{\theta}_{\lambda}(Q,\mathbf{v})$ is reduced if $\mathcal{M}^{\theta'}_{\lambda}(Q,\mathbf{v})$ is reduced, and if this is the case then the morphism $p^{\theta'}_{\theta}: \mathcal{M}^{\theta'}_{\lambda}(Q,\mathbf{v})\rightarrow \mathcal{M}^{\theta}_{\lambda}(Q,\mathbf{v})$ is birational.
\end{lemma}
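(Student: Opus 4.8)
The plan is to regard $p^{\theta'}_{\theta}$ as a morphism of families over $Z$ and to transport reducedness from the $\theta'$-fiber to the $\theta$-fiber by cohomology and base change. Write $A:=\mathcal{M}^{\theta}_Z(Q,\vv)$ and $B:=\mathcal{M}^{\theta'}_Z(Q,\vv)$, with structure maps $\pi_A\colon A\to Z$, $\pi_B\colon B\to Z$, and let $p:=p^{\theta'}_{\theta}\colon B\to A$ be the projective morphism over $Z$, which is an isomorphism over $Z_0$ and hence birational. For $\lambda\in Z$ I abbreviate $A_\lambda=\mathcal{M}^{\theta}_{\lambda}(Q,\vv)$, $B_\lambda=\mathcal{M}^{\theta'}_{\lambda}(Q,\vv)$, and $p_\lambda\colon B_\lambda\to A_\lambda$ for the induced map on fibers. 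The hypothesis is that $B_\lambda$ is reduced, and the goal is to deduce that $A_\lambda$ is reduced and that $p_\lambda$ is birational.

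First I would record the consequences of flatness of $\mu$. Since $\mu$ is flat and $T^*R(Q,\vv)$ is smooth, $\mu^{-1}(Z)$ is a complete intersection, hence Cohen--Macaulay, and $\mu^{-1}(Z)\to Z$ is flat; the same persists after restricting to the open semistable locus. Passing to the GIT quotient and using that in characteristic zero the invariants of a variety with rational (indeed symplectic) singularities under a reductive group again have rational singularities (Boutot), both $A$ and $B$ are Cohen--Macaulay, and by Lemma \ref{lem: characterization of flatness}(3) every fiber of $\pi_A$ and $\pi_B$ has the constant dimension $2\pp(\vv)$. Miracle flatness then shows $\pi_A$ and $\pi_B$ are flat over the smooth base $Z$; in particular $\mathcal{O}_A$ and $\mathcal{O}_B$ are flat over $\mathcal{O}_Z$, so formation of the fibers over $\lambda$ commutes with derived pullback.

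The core step is the identity $Rp_*\mathcal{O}_B=\mathcal{O}_A$. Because $A$ is normal (our hypothesis) and $p$ is proper and birational, $p_*\mathcal{O}_B=\mathcal{O}_A$ by Zariski's main theorem, and the vanishing $R^{>0}p_*\mathcal{O}_B=0$ follows once $A$ and $B$ have rational singularities: a resolution of $A$ factors through $B$ (resolve $B$, then $B$ rational gives $\mathcal{O}_B$ back, and $A$ rational gives $\mathcal{O}_A$ back). Granting this, derived base change along the fiber inclusion $i_\lambda\colon A_\lambda\hookrightarrow A$ is legitimate (the relevant square is Tor-independent because $\pi_A,\pi_B$ are flat), and yields
\[
R(p_\lambda)_*\mathcal{O}_{B_\lambda}\;=\;Li_\lambda^{*}\,Rp_*\mathcal{O}_B\;=\;Li_\lambda^{*}\mathcal{O}_A\;=\;\mathcal{O}_{A_\lambda}.
\]
In particular $(p_\lambda)_*\mathcal{O}_{B_\lambda}\cong\mathcal{O}_{A_\lambda}$. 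Since $B_\lambda$ is reduced, the rings of sections of $(p_\lambda)_*\mathcal{O}_{B_\lambda}$ are reduced, hence so are those of $\mathcal{O}_{A_\lambda}$; thus $A_\lambda$ is reduced. For the last assertion, once $A_\lambda$ is reduced it is normal by Lemma \ref{lem: characterization of flatness}, and $p_\lambda$ is proper and surjective with $(p_\lambda)_*\mathcal{O}_{B_\lambda}=\mathcal{O}_{A_\lambda}$ (so connected fibers) between schemes of the same dimension $2\pp(\vv)$; a proper surjection with connected fibers that is generically finite onto a normal target is birational.

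The hard part will be the vanishing $R^{>0}p_*\mathcal{O}_B=0$, equivalently that $p\colon B\to A$ is a rational partial resolution: this rests on $A$ and $B$ having rational singularities, which under flatness of $\mu$ comes from the symplectic-singularity structure of quiver schemes but must be invoked carefully, and it is precisely what the normality hypothesis on $\mathcal{M}^{\theta}_Z(Q,\vv)$ is there to support. A secondary technical point is the Cohen--Macaulayness of the GIT quotients, needed both to make the families flat over $Z$ and to guarantee that the base change is underived; this is where Boutot's theorem in characteristic zero enters.
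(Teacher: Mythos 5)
Your outline matches the paper's up to the decisive step, but that step is left as an acknowledged hole, and the route you sketch for filling it is not the one that actually works. Everything hinges on $R^{>0}p_*\mathcal O_B=0$, and you propose to get it from ``$A$ and $B$ have rational singularities.'' That is not available: the symplectic-singularity results of Bellamy--Schedler apply to the individual fibers $\mathcal M^{\theta}_{\lambda}(Q,\vv)$, not to the total spaces $A=\mathcal M^{\theta}_Z(Q,\vv)$ and $B=\mathcal M^{\theta'}_Z(Q,\vv)$ over $Z$, and the hypothesis of the lemma only gives normality of $A$, not rationality; for $B$ you do not even know reducedness a priori (the paper only establishes reducedness of the central fiber $\mathcal M^{\theta'}_0(Q,\vv)$ via Lemma \ref{lemma: generic case}). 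Your appeal to Boutot to get Cohen--Macaulayness of the quotients has the same circularity, since it presupposes rational singularities upstairs; flatness of $\pi_A,\pi_B$ over $Z$ is better obtained directly, the invariants being a $\mathbb C[Z]$-module direct summand of the flat ring $\mathbb C[\mu^{-1}(Z)]$.

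The missing idea is how the paper localizes the problem to the central fiber. Since $A$ and $B$ are flat over $Z$ and $p$ is proper, $Rp_*\mathcal O_B$ is relatively perfect over $Z$ and its formation commutes with base change; restricting derivedly to $0\in Z$ gives $Rp_{0*}\mathcal O_{B_0}$, and \emph{there} one can invoke that $\mathcal M^{\theta}_0(Q,\vv)_{\mathrm{red}}$ and $\mathcal M^{\theta'}_0(Q,\vv)$ have symplectic, hence rational, singularities and that the induced map is birational, so a resolution of $B_0$ resolves both and kills the higher direct images on the central fiber. A Nakayama argument then gives vanishing of $R^{>0}p_*\mathcal O_B$ in a neighborhood of the central fiber, and the contracting $\mathbb C^{\times}$-action (positive weights on $Z$, so $\mathcal M^{\theta}_Z(Q,\vv)$ retracts onto $\mathcal M^{\theta}_0(Q,\vv)$) propagates the vanishing to all of $A$. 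Without this reduction-to-the-central-fiber-plus-contraction device, your argument does not close; the remainder of your proposal (normality of $A$ forcing $\mathcal O_A\xrightarrow{\sim}p_*\mathcal O_B$, base change to $\lambda$, and the birationality of $p_\lambda$ from equal dimensions and connected fibers) agrees with the paper.
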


\begin{proof}
Note that the normality of $\mathcal M^{\theta}_Z(Q,\vv)$ implies that $p^{\theta'}_{\theta}$ has connected fibers since it is an isomorphism over the open subset $Z_0$. In particular, the restriction of $p^{\theta'}_{\theta}$ to any fiber over $\lambda\in Z$ gives rise to a birational morphism $\mathcal{M}^{\theta'}_{\lambda}(Q,\mathbf{v})_{\mathrm{red}}\to\mathcal{M}^{\theta}_{\lambda}(Q,\mathbf{v})_{\mathrm{red}}$ since both sides have the same dimension and $p^{\theta'}_{\theta}$ has connected fibers. $\mathcal{M}^{\theta'}_{0}(Q,\mathbf{v})$ is reduced by Lemma \ref{lemma: generic case}, and it remains to show that $\mathcal{M}^{\theta}_{\lambda}(Q,\mathbf{v})$ is reduced.

We claim that $Rp^{\theta'}_{\theta*}\mathcal O_{\mathcal{M}^{\theta'}_{Z}(Q,\mathbf{v})}$ is concentrated in degree zero and $p^{\theta'}_{\theta*}\mathcal O_{\mathcal{M}^{\theta'}_{Z}(Q,\mathbf{v})}$ is flat over $Z$. In effect, since both $\mathcal{M}^{\theta'}_{Z}(Q,\mathbf{v})$ and $\mathcal{M}^{\theta}_{Z}(Q,\mathbf{v})$ are flat over $Z$ and $p^{\theta'}_{\theta}$ is proper, $Rp^{\theta'}_{\theta*}\mathcal O_{\mathcal{M}^{\theta'}_{Z}(Q,\mathbf{v})}$ is is a relative perfect complex, and its formation commutes with base change $Z'\to Z$. In particular, we base change to $i:0\hookrightarrow Z$, and get
\begin{align*}
    Li^*Rp^{\theta'}_{\theta*}\mathcal O_{\mathcal{M}^{\theta'}_{Z}(Q,\mathbf{v})}\cong Rp^{\theta'}_{\theta*} ({\mathcal{M}^{\theta'}_{0}(Q,\mathbf{v})},\mathcal O_{\mathcal{M}^{\theta'}_{0}(Q,\mathbf{v})}).
\end{align*}
The RHS is concentrated in degree zero, because both $\mathcal{M}^{\theta}_{0}(Q,\mathbf{v})_{\mathrm{red}}$ and $\mathcal{M}^{\theta'}_{0}(Q,\mathbf{v})$ (which is reduced by Lemma \ref{lemma: generic case}) have symplectic singularities \cite[Theorem 1.2]{bellamy2016symplectic}, in particular they have rational singularities, moreover the induced morphism between them is birational, so we conclude that $R^kp^{\theta'}_{\theta*} ({\mathcal{M}^{\theta'}_{0}(Q,\mathbf{v})},\mathcal O_{\mathcal{M}^{\theta'}_{0}(Q,\mathbf{v})})$ for $k>0$ (using the fact that any desingularization of $\mathcal{M}^{\theta'}_{0}(Q,\mathbf{v})$ is automatically a desingularization of $\mathcal{M}^{\theta}_{0}(Q,\mathbf{v})_{\mathrm{red}}$). Hence $Rp^{\theta'}_{\theta*}\mathcal O_{\mathcal{M}^{\theta'}_{Z}(Q,\mathbf{v})}$ is concentrated in degree zero in an open neighborhood of $\mathcal{M}^{\theta}_{0}(Q,\mathbf{v})$. Next we observe that there is a $\mathbb C^{\times}$ action on the quiver which scales every arrow with weight one, this action commutes with $\GL(\mathbf{v})$-action and descends to actions on $\mathcal{M}^{\theta'}_{Z}(Q,\mathbf{v})$ and $\mathcal{M}^{\theta}_{Z}(Q,\mathbf{v})$ and $p^{\theta'}_{\theta}$ is equivariant. Since $Z$ has positive weights under this $\mathbb C^{\times}$-action, $\mathcal{M}^{\theta}_{Z}(Q,\mathbf{v})$ contracts to $\mathcal{M}^{\theta}_{0}(Q,\mathbf{v})$ under this $\mathbb C^{\times}$-action, henceforth $Rp^{\theta'}_{\theta*}\mathcal O_{\mathcal{M}^{\theta'}_{Z}(Q,\mathbf{v})}$ is concentrated in degree zero on the whole $\mathcal{M}^{\theta}_{Z}(Q,\mathbf{v})$. $p^{\theta'}_{\theta*}\mathcal O_{\mathcal{M}^{\theta'}_{Z}(Q,\mathbf{v})}$ is flat over $Z$ because it's relatively perfect over $Z$.

The punchline of above discussions is that we have a homomorphism between sheaves of rings
\begin{align}\label{eqn: map of sheaves}
    \mathcal O_{\mathcal{M}^{\theta}_{Z}(Q,\mathbf{v})}\longrightarrow p^{\theta'}_{\theta*}\mathcal O_{\mathcal{M}^{\theta'}_{Z}(Q,\mathbf{v})},
\end{align}
and it is an isomorphism on the open locus $Z_0$. By the assumption that $\mathcal{M}^{\theta}_{Z}(Q,\mathbf{v})$ is normal, the homomorphism \eqref{eqn: map of sheaves} is isomorphism. Since $p^{\theta'}_{\theta*}\mathcal O_{\mathcal{M}^{\theta'}_{Z}(Q,\mathbf{v})}$ has base change property, specialization to arbitrary $\lambda\in Z$ gives us isomorphism $$\mathcal O_{\mathcal{M}^{\theta}_{\lambda}(Q,\mathbf{v})}\cong p^{\theta'}_{\theta*}\mathcal O_{\mathcal{M}^{\theta'}_{\lambda}(Q,\mathbf{v})}.$$ In particular, $\mathcal{M}^{\theta}_{\lambda}(Q,\mathbf{v})$ is reduced if $\mathcal{M}^{\theta'}_{\lambda}(Q,\mathbf{v})$ is reduced.
\end{proof}

An immediate consequence of Lemma \ref{lemma: normality of family implies reducedness} and Theorem \ref{thm: reducedness} is the following:
\begin{corollary}\label{cor: resolution of singularity}
If the moment map $\mu$ is flat, then $\forall (\theta, \lambda)\in \Theta\times Z$, the projective morphisms $p^{\theta'}_{\theta}: \mathcal{M}^{\theta'}_{\lambda}(Q,\mathbf{v})\rightarrow \mathcal{M}^{\theta}_{\lambda}(Q,\mathbf{v})$ are birational. In particular, we have a quasi-isomorphism of differential graded algebra
\begin{align}\label{eqn: quasi-isom}
    \left(\mathbb C[T^*R(Q,\vv)]\overset{L}{\otimes}_{\mathbb C[\mathfrak{g}(\vv)^*]}\mathbb C\right)^{G(\vv)}\cong \Gamma(\mathcal M^{\theta}_{0}(Q,\vv),\mathcal O_{\mathcal M^{\theta}_{0}(Q,\vv)}),
\end{align}
where $\mathbb C$ in the left hand side is the stalk of $\mathcal O_{\mathfrak{g}(\vv)^*}$ at zero.
\end{corollary}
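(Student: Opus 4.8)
The plan is to read the statement off from Lemma \ref{lemma: normality of family implies reducedness} and Theorem \ref{thm: reducedness}, the only input not yet in place being the normality of the total space $\mathcal{M}^{\theta}_Z(Q,\vv)$ that Lemma \ref{lemma: normality of family implies reducedness} requires as a hypothesis. First I would establish this normality as follows. By Theorem \ref{thm: reducedness}, flatness of $\mu$ makes every fiber $\mathcal{M}^{\theta}_{\lambda}(Q,\vv)$ reduced, so it agrees with its own reduction, which is normal by the first assertion of Lemma \ref{lem: characterization of flatness}. The family $\mathcal{M}^{\theta}_Z(Q,\vv)\to Z$ is flat (as recorded in the proof of Lemma \ref{lemma: normality of family implies reducedness}) over the smooth, hence regular, base $Z$, and all of its fibers are now normal; the standard descent of normality along a flat morphism with regular base and geometrically normal fibers (checking $R_1$ and $S_2$ fiber by fiber) then shows that $\mathcal{M}^{\theta}_Z(Q,\vv)$ is normal. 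With $\mu$ flat, $\mathcal{M}^{\theta}_Z(Q,\vv)$ normal, and $\mathcal{M}^{\theta'}_{\lambda}(Q,\vv)$ reduced (Theorem \ref{thm: reducedness}), all hypotheses of Lemma \ref{lemma: normality of family implies reducedness} hold, and the lemma gives that $p^{\theta'}_{\theta}\colon \mathcal{M}^{\theta'}_{\lambda}(Q,\vv)\to \mathcal{M}^{\theta}_{\lambda}(Q,\vv)$ is birational for every $(\theta,\lambda)\in\Theta\times Z$. This is the first assertion.

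For the quasi-isomorphism \eqref{eqn: quasi-isom} I would proceed in two steps. Since $\mu$ is flat, $\mathbb{C}[T^*R(Q,\vv)]$ is a flat $\mathbb{C}[\mathfrak{g}(\vv)^*]$-module, so $\mathrm{Tor}^{\mathbb{C}[\mathfrak{g}(\vv)^*]}_{>0}$ vanish and $\mathbb{C}[T^*R(Q,\vv)]\overset{L}{\otimes}_{\mathbb{C}[\mathfrak{g}(\vv)^*]}\mathbb{C}$ is concentrated in degree zero, where it equals $\mathbb{C}[\mu^{-1}(0)]$; equivalently, the components of $\mu$ form a regular sequence and the Koszul complex resolving $\mathbb{C}$ becomes an honest resolution after tensoring. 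Because $G(\vv)$ is reductive and we work in characteristic zero, the invariants functor $(-)^{G(\vv)}$ is exact and commutes with cohomology, so the left-hand side of \eqref{eqn: quasi-isom} is concentrated in degree zero with $H^0=\mathbb{C}[\mu^{-1}(0)]^{G(\vv)}=\Gamma(\mathcal{M}_0(Q,\vv),\mathcal{O}_{\mathcal{M}_0(Q,\vv)})$. It then remains to match this with $\Gamma(\mathcal{M}^{\theta}_0(Q,\vv),\mathcal{O})$, and here I would invoke the isomorphism $\mathcal{O}_{\mathcal{M}^{\theta}_{\lambda}(Q,\vv)}\cong p^{\theta'}_{\theta*}\mathcal{O}_{\mathcal{M}^{\theta'}_{\lambda}(Q,\vv)}$ produced inside the proof of Lemma \ref{lemma: normality of family implies reducedness}: specializing it to $\lambda=0$ for the given $\theta$ and separately for $\theta=0$ (recall $\mathcal{M}_Z=\mathcal{M}^{0}_Z$ and \eqref{eqn: projective morphism}) gives $\Gamma(\mathcal{M}^{\theta}_0(Q,\vv),\mathcal{O})=\Gamma(\mathcal{M}^{\theta'}_0(Q,\vv),\mathcal{O})=\Gamma(\mathcal{M}_0(Q,\vv),\mathcal{O})$. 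Conceptually this is just $p^{\theta}_*\mathcal{O}=\mathcal{O}$ for the projective birational morphism $p^{\theta}\colon\mathcal{M}^{\theta}_0(Q,\vv)\to\mathcal{M}_0(Q,\vv)$ onto the normal affine base $\mathcal{M}_0(Q,\vv)$, by Zariski's main theorem. Combining the two steps yields \eqref{eqn: quasi-isom}.

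I expect the genuine obstacle to be the normality of the total space $\mathcal{M}^{\theta}_Z(Q,\vv)$: the remaining ingredients (Tor-vanishing from flatness, exactness of reductive invariants, and $p_*\mathcal{O}=\mathcal{O}$ for a proper birational map onto a normal target) are routine, whereas fiberwise normality does not on its own imply normality of a total space. What saves us is that the family is flat over the regular base $Z$, so the descent criterion applies; it is essential that Theorem \ref{thm: reducedness} upgrades \emph{reducedness of every fiber} into \emph{normality of every fiber} through Lemma \ref{lem: characterization of flatness}, rather than only over the generic locus $Z_0$. This is precisely the place where global flatness of $\mu$ (Theorem \ref{thm: reducedness}), and not merely flatness along a single $\mu^{-1}(\lambda)^{\theta\text{-ss}}$, is used.
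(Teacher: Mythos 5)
Your proposal is correct, and its skeleton --- feed Theorem \ref{thm: reducedness} into Lemma \ref{lemma: normality of family implies reducedness} to get birationality, then combine flatness of $\mu$ (regular sequence, so the derived tensor product sits in degree zero), exactness of $(-)^{G(\vv)}$, and $p_*\mathcal{O}=\mathcal{O}$ for the resulting proper birational map onto the normal affine quotient --- is exactly what the paper intends when it calls the corollary an immediate consequence of that lemma and that theorem (the paper gives no further argument for the quasi-isomorphism). The one place where you genuinely diverge is the normality of the total space $\mathcal{M}^{\theta}_{Z}(Q,\vv)$: the paper obtains it inside the induction proving Theorem \ref{thm: reducedness}, via Crawley-Boevey's criterion (Lemma \ref{Lemma: Normality}, an $(S_2)$ plus codimension-two argument on the semistable locus), and then simply reuses it, whereas you rederive it from the conclusion of Theorem \ref{thm: reducedness} by upgrading reduced fibers to normal fibers through Lemma \ref{lem: characterization of flatness} and applying descent of normality along the flat family over the regular base $Z$. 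Your route is self-contained at the level of the stated results (it does not require re-opening the proof of Theorem \ref{thm: reducedness}), at the cost of invoking the EGA-style fiberwise normality criterion; the paper's route avoids that citation but leaves the normality hypothesis of Lemma \ref{lemma: normality of family implies reducedness} implicitly justified by the internals of Section \ref{sec: proof}. Both are valid, and your identification of this normality as the only non-routine input is accurate.
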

According to the Introduction, the quasi-isomorphism \eqref{eqn: quasi-isom} shows that the equivariant integration formula in the physics literature \cite{nekrasov2004abcd,butti2007counting,benvenuti2010hilbert} indeed computes the equivariant K-theory class of $\Gamma(\mathcal M^{\theta}_{0}(Q,\vv),\mathcal O_{\mathcal M^{\theta}_{0}(Q,\vv)})$, i.e. the K-theoretic Nekrasov partition function.

\subsection{$(\pm 1)$-Reflection isomorphisms}\label{subsec: Reflection isomorphisms} Consider the reflections $s_i$ at loop-free vertex $\ee_i$, which acts on the quiver data $(\vv,\lambda,\theta)$ as
\begin{align}
    s_i\vv=\vv-(\vv,\ee_i)_Q\ee_i,\; (s_i\lambda)_j=\lambda_j-(\ee_i,\ee_j)_Q\lambda_i,\; (s_i\theta)_j=\theta_j-(\ee_i,\ee_j)_Q\theta_i.
\end{align}
The Lusztig-Maffei-Nakajima reflection isomorphism \cite[Theorem 26]{maffei2002remark} shows that if either $\lambda_i$ or $\theta_i$ is non-zero, then there is an isomorphism of schemes $\Phi_{s_i}:\mathcal{M}^{\theta}_{\lambda}(Q,\mathbf{v})\cong \mathcal{M}^{s_i\theta}_{s_i\lambda}(Q,s_i\mathbf{v})$. It is implicit in \cite{maffei2002remark} that the construction there actually holds for the full scheme structure, not just for reduced scheme structure. The key ingredient in the construction is an auxiliary scheme $Z^{\theta,\lambda}_i$ with projections 
\begin{center}
\begin{tikzcd}
\mu_{\vv}^{-1}(\lambda)^{\theta\mathrm{-ss}} & Z^{\theta,\lambda}_i \arrow[l,"p_i" ']  \arrow[r,"p'_i"] & \mu_{s_i\vv}^{-1}(s_i\lambda)^{s_i\theta\mathrm{-ss}},
\end{tikzcd}
\end{center}
such that $p_i$ is $\GL((s_i\vv)_i)$-torsor, and $p'_i$ is $\GL(\vv_i)$-torsor. Moreover, there is an action of $G_i(\vv)=\prod_{j\neq i}\GL(\vv_j)\times \GL(\vv_i)\times \GL((s_i\vv)_i)$ on $Z^{\theta,\lambda}_i$ such that $\GL((s_i\vv)_i)$ acts through the torsor $p_i$ and $\GL(\vv_i)$ acts through the torsor $p'_i$, and furthermore $p_i$ and $p'_i$ are $G_i(\vv)$-equivariant. Passing to the categorical quotient by $G_i(\vv)$, there are two isomorphisms:
\begin{center}
\begin{tikzcd}
\mathcal{M}^{\theta}_{\lambda}(Q,\mathbf{v}) & Z^{\theta,\lambda}_i/G_i(\vv) \arrow[l,"\overline p_i" ']  \arrow[r,"\overline p'_i"] & \mathcal{M}^{s_i\theta}_{s_i\lambda}(Q,s_i\mathbf{v}),
\end{tikzcd}
\end{center}
and we set $\Phi_{s_i}=\overline p'_i\circ \overline p_i^{-1}$. The condition that either $\lambda_i$ or $\theta_i$ is non-zero is crucial in the construction of $Z^{\theta,\lambda}_i$. In the following we will eliminate this condition, but only for flat $\mu_{\vv}$ and $(\pm 1)$-reflections.

\begin{theorem}\label{thm: reflection isom}
If the moment map $\mu$ is flat, and $i\in Q_0$ is a loop-free vertex such that $(\vv,\ee_i)_Q=\pm 1$, then $\forall (\theta,\lambda)\in \Theta\times Z$, there is an isomorphism $\Phi^{\theta,\lambda}_{s_i}:\mathcal{M}^{\theta}_{\lambda}(Q,\mathbf{v})\cong \mathcal{M}^{s_i\theta}_{s_i\lambda}(Q,s_i\mathbf{v})$ such that the diagram
\begin{center}
\begin{tikzcd}
\mathcal{M}^{\theta}_{\lambda}(Q,\mathbf{v}) \arrow[d]  \arrow[r,"\Phi^{\theta,\lambda}_{s_i}"] & \mathcal{M}^{s_i\theta}_{s_i\lambda}(Q,s_i\mathbf{v}) \arrow[d]\\
\mathcal{M}_{\lambda}(Q,\mathbf{v})  \arrow[r,"\Phi^{0,\lambda}_{s_i}"] & \mathcal{M}_{s_i\lambda}(Q,s_i\mathbf{v}) 
\end{tikzcd}
\end{center}
commutes. Moreover if either $\lambda_i$ or $\theta_i$ is non-zero then $\Phi^{\theta,\lambda}_{s_i}$ agrees with the Lusztig-Maffei-Nakajima reflection isomorphism.
\end{theorem}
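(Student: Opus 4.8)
The plan is to reduce immediately to the genuinely new case $\lambda_i=\theta_i=0$, since whenever $\lambda_i\neq 0$ or $\theta_i\neq 0$ both the isomorphism $\Phi^{\theta,\lambda}_{s_i}$ and its compatibility with the affinization are already supplied by \cite{maffei2002remark}. For the remaining case I would perturb the stability parameter. First choose a generic $\theta'$ in a chamber whose closure contains $\theta$, with $\theta'_i\neq 0$ and $\mu^{-1}(Z)^{\theta'\text{-ss}}\subset\mu^{-1}(Z)^{\theta\text{-ss}}$; this produces the projective birational contraction $p^{\theta'}_{\theta}\colon\mathcal{M}^{\theta'}_{\lambda}(Q,\vv)\to\mathcal{M}^{\theta}_{\lambda}(Q,\vv)$. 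The point where the hypothesis $(\vv,\ee_i)_Q=\pm 1$ enters is that such a $(\pm1)$-reflection carries the wall-and-chamber structure for $\vv$ to that for $s_i\vv$ compatibly, so that $s_i\theta'$ is again generic and refines $s_i\theta$, yielding the companion contraction $p^{s_i\theta'}_{s_i\theta}\colon\mathcal{M}^{s_i\theta'}_{s_i\lambda}(Q,s_i\vv)\to\mathcal{M}^{s_i\theta}_{s_i\lambda}(Q,s_i\vv)$. Since $\theta'_i\neq 0$, the Lusztig--Maffei--Nakajima isomorphism applies at the fine level and gives $\Phi^{\theta',\lambda}_{s_i}\colon\mathcal{M}^{\theta'}_{\lambda}(Q,\vv)\cong\mathcal{M}^{s_i\theta'}_{s_i\lambda}(Q,s_i\vv)$ for every $\lambda$.

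Next I would pin down the flatness and normality needed to run the descent. Flatness of $\hat\mu$ for $(Q,s_i\vv)$ along the fine semistable locus follows from the fine-level isomorphism together with Lemma \ref{lem: characterization of flatness}, because $\dim\mathcal{M}^{s_i\theta'}_{s_i\lambda}(Q,s_i\vv)=\dim\mathcal{M}^{\theta'}_{\lambda}(Q,\vv)=2\pp(\vv)=2\pp(s_i\vv)$ using $s_i$-invariance of $\pp$. Flatness along the coarse locus then follows from the dimension squeeze $2\pp(s_i\vv)\le\dim\mathcal{M}^{s_i\theta}_{s_i\lambda}(Q,s_i\vv)\le\dim\mathcal{M}^{s_i\theta'}_{s_i\lambda}(Q,s_i\vv)=2\pp(s_i\vv)$, where the left inequality is the universal lower bound on the dimension of a quiver scheme and the right one holds because $p^{s_i\theta'}_{s_i\theta}$ is proper and surjective. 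With flatness along the relevant loci in hand, Theorem \ref{thm: reducedness_enhanced} and Lemma \ref{lem: characterization of flatness} show that all four schemes are reduced and normal, and that the families $\mathcal{M}^{\theta}_{Z}(Q,\vv)$ and $\mathcal{M}^{s_i\theta}_{Z}(Q,s_i\vv)$ are flat over $Z$ with normal fibers, hence normal.

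With this in place I would carry out the descent exactly as in Lemma \ref{lemma: normality of family implies reducedness}. Writing $W=\mathcal{M}^{\theta'}_{\lambda}(Q,\vv)\cong\mathcal{M}^{s_i\theta'}_{s_i\lambda}(Q,s_i\vv)$ and $a=p^{\theta'}_{\theta}$, $b=p^{s_i\theta'}_{s_i\theta}\circ\Phi^{\theta',\lambda}_{s_i}$, the normality of the families and the symplectic (hence rational) singularities of the quiver schemes at $\lambda=0$ give $Ra_{*}\mathcal{O}_{W}=\mathcal{O}_{\mathcal{M}^{\theta}_{\lambda}(Q,\vv)}$ and $Rb_{*}\mathcal{O}_{W}=\mathcal{O}_{\mathcal{M}^{s_i\theta}_{s_i\lambda}(Q,s_i\vv)}$, concentrated in degree zero. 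Because the reflection functor intertwines the character $\theta$ of $G(\vv)$ with the character $s_i\theta$ of $G(s_i\vv)$, the pullbacks to $W$ of the two GIT ample line bundles agree up to a twist pulled back from the affine base; by the projection formula this identifies the graded algebras $\bigoplus_{n}\Gamma(\mathcal{M}^{\theta}_{\lambda}(Q,\vv),L_{\theta}^{n})$ and $\bigoplus_{n}\Gamma(\mathcal{M}^{s_i\theta}_{s_i\lambda}(Q,s_i\vv),L_{s_i\theta}^{n})$. Their degree-zero pieces are the base rings, whose identification is precisely the affine isomorphism $\Phi^{0,\lambda}_{s_i}\colon\mathcal{M}_{\lambda}(Q,\vv)\cong\mathcal{M}_{s_i\lambda}(Q,s_i\vv)$; taking $\Proj$ over this common base yields $\Phi^{\theta,\lambda}_{s_i}$ as an isomorphism over the affine quotient, which is exactly the commuting square.

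Finally, when $\lambda_i\neq 0$ or $\theta_i\neq 0$ the constructed $\Phi^{\theta,\lambda}_{s_i}$ agrees with the Lusztig--Maffei--Nakajima isomorphism: both are morphisms between reduced separated schemes (reduced by Theorem \ref{thm: reducedness_enhanced}) that coincide on the dense open stable locus, hence coincide. I expect the main obstacle to be the descent step, and within it two points require care: establishing the degree-zero concentration together with the identities $a_{*}\mathcal{O}_{W}=\mathcal{O}$ and $b_{*}\mathcal{O}_{W}=\mathcal{O}$ using only the fibrewise flatness produced above, and verifying that the reflection functor genuinely matches the two GIT linearizations. Underlying both is the role of $(\vv,\ee_i)_Q=\pm 1$, which is what guarantees that $s_i$ sends refinements of $\theta$ to refinements of $s_i\theta$ so that both contractions exist and are birational; for a general reflection the chamber structures need not correspond and this argument breaks down.
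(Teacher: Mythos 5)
Your overall strategy coincides with the paper's: reduce to $\lambda_i=\theta_i=0$, perturb to a generic $\theta'$ (so $\theta'_i\neq 0$ and Lusztig--Maffei--Nakajima applies upstairs), match the two GIT linearizations on $\mathcal{M}^{\theta'}_{\lambda}(Q,\vv)\cong\mathcal{M}^{s_i\theta'}_{s_i\lambda}(Q,s_i\vv)$ using $\theta_i=(s_i\theta)_i=0$, and recover $\Phi^{\theta,\lambda}_{s_i}$ by taking $\Proj$ of the section rings over the common affinization. However, there are two genuine gaps. First, your derivation of the flatness of the moment map for $(Q,s_i\vv)$ does not work. The ``universal lower bound'' $2\pp(s_i\vv)\le\dim\mathcal{M}^{s_i\theta}_{s_i\lambda}(Q,s_i\vv)$ is false in general: the lower bound $\vv\cdot\vv-1+2\pp(\vv)$ holds for the fibers of $\mu$, not for the GIT quotient of an open semistable locus (which may miss the large components entirely, and whose generic stabilizer need not be trivial). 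Likewise, the inequality $\dim\mathcal{M}^{s_i\theta}_{s_i\lambda}\le\dim\mathcal{M}^{s_i\theta'}_{s_i\lambda}$ requires surjectivity of $p^{s_i\theta'}_{s_i\theta}$, which is exactly the kind of statement that is a \emph{consequence} of flatness (via Corollary \ref{cor: resolution of singularity}) rather than something available beforehand; in general the image of such a contraction is a proper closed subset. Moreover, even granting these, your argument only yields flatness along particular semistable loci, whereas the descent via Lemma \ref{lemma: normality of family implies reducedness} and the normality of $\mathcal{M}^{s_i\theta}_{Z}(Q,s_i\vv)$ require global flatness of $\mu_{s_i\vv}$. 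The paper gets this from Su's combinatorial theorem that admissible $(\pm1)$-reflections preserve the Crawley--Boevey flatness inequality, and this --- not a compatibility of chamber structures, which holds for any reflection via the reflection functor --- is the actual role of the hypothesis $(\vv,\ee_i)_Q=\pm1$.

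Second, you never address whether $\Phi^{\theta,\lambda}_{s_i}$, and in particular the bottom arrow $\Phi^{0,\lambda}_{s_i}$ of the square, depends on the choice of $\theta'$. If the bottom arrow is only defined up to a choice of generic perturbation, the commutativity assertion is not well posed, and the consistency between the case $\theta\neq 0,\ \theta_i=\lambda_i=0$ and the case $\theta=0$ (each built from its own $\theta'$) is exactly what must be checked. The paper devotes the entire second half of its proof to this: it constructs a global correspondence $Z^{\lambda}_i\subset T^*R(Q^{(i)},\vv^{(i)})\times\mathcal{S}_{n,m}$ containing every $Z^{\theta',\lambda}_i$ as an open subscheme, and shows by an invariant-theory computation (using $X_2Y_2=0$, $Y_1X_1=Y_2X_2$) that $\mathbb C[Z^{\lambda}_i]^{G_i(\vv)}\cong\mathbb C[\mathcal{M}_{\lambda}(Q,\vv)]\cong\mathbb C[\mathcal{M}_{s_i\lambda}(Q,s_i\vv)]$, so that $\Phi^{0,\lambda}_{s_i}=\overline{\mathbf{p}}'_i\circ\overline{\mathbf{p}}_i^{-1}$ is canonical. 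Your closing remark that the constructed map agrees with Lusztig--Maffei--Nakajima because both agree on the stable locus is also not usable as stated, since in the divisible case the stable locus can be empty; in the paper this agreement is by definition, not an argument.
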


\begin{proof}
To begin with, we note that the moment map for $(Q,s_i\vv)$ is flat by Proposition \cite[Proposition 7.1]{su2006flatness}. If either $\lambda_i$ or $\theta_i$ is non-zero then we define $\Phi^{\theta,\lambda}_{s_i}$ to be the Lusztig-Maffei-Nakajima reflection isomorphism. Note that in this case if $\lambda_i\neq 0$, the construction of $Z^{\theta,\lambda}_i$ fits into a commutative diagram
\begin{center}
\begin{tikzcd}
\mu_{\vv}^{-1}(\lambda)^{\theta\mathrm{-ss}}\arrow[d,hook] & Z^{\theta,\lambda}_i \arrow[l,"p_i" ']  \arrow[r,"p'_i"] \arrow[d,hook] & \mu_{s_i\vv}^{-1}(s_i\lambda)^{s_i\theta\mathrm{-ss}} \arrow[d,hook]\\
\mu_{\vv}^{-1}(\lambda) & Z^{0,\lambda}_i \arrow[l,"p_i" ']  \arrow[r,"p'_i"] & \mu_{s_i\vv}^{-1}(s_i\lambda)
\end{tikzcd}
\end{center}
Thus the diagram 
\begin{center}
\begin{tikzcd}
\mathcal{M}^{\theta}_{\lambda}(Q,\mathbf{v}) \arrow[d]  \arrow[r,"\Phi^{\theta,\lambda}_{s_i}"] & \mathcal{M}^{s_i\theta}_{s_i\lambda}(Q,s_i\mathbf{v}) \arrow[d]\\
\mathcal{M}_{\lambda}(Q,\mathbf{v})  \arrow[r,"\Phi^{0,\lambda}_{s_i}"] & \mathcal{M}_{s_i\lambda}(Q,s_i\mathbf{v}) 
\end{tikzcd}
\end{center}
commutes.

If $\lambda_i=\theta_i=0$, we define $\Phi^{\theta,\lambda}_{s_i}$ as following. First we find a generic $\theta'$ in the Euclidean neighborhood of $\theta$ such that $\mu^{-1}(\lambda)^{\theta'\mathrm{-ss}}\subset \mu^{-1}(\lambda)^{\theta\mathrm{-ss}}$, then by \ref{cor: resolution of singularity} the morphism $p^{\theta'}_{\theta}:\mathcal{M}^{\theta'}_{\lambda}(Q,\mathbf{v})\to \mathcal{M}^{\theta}_{\lambda}(Q,\mathbf{v})$ is projective and birational. Since $\theta'$ is generic, in particular $\theta_i'\neq 0$, we have Lusztig-Maffei-Nakajima reflection isomorphism $\Phi^{\theta',\lambda}_{s_i}: \mathcal{M}^{\theta'}_{\lambda}(Q,\mathbf{v})\cong \mathcal{M}^{s_i\theta'}_{s_i\lambda}(Q,s_i\mathbf{v})$. Taking affinization of the domain and codomain of $\Phi^{\theta',\lambda}_{s_i}$, we get an isomorphism $\Phi^{0,\lambda}_{s_i}: \mathcal{M}_{\lambda}(Q,\mathbf{v})\cong \mathcal{M}_{s_i\lambda}(Q,s_i\mathbf{v})$.

After scaling by an integer, we assume that $\theta\in \mathbb Z^{Q_0}$. By the construction of GIT quotient, there is an ample line bundle $\mathcal L(\theta)$ on $\mathcal{M}^{\theta}_{\lambda}(Q,\mathbf{v})$ such that 
\begin{align}
    \mathcal{M}^{\theta}_{\lambda}(Q,\mathbf{v})=\Proj \bigoplus_{n\ge 0}\Gamma(\mathcal{M}^{\theta}_{\lambda}(Q,\mathbf{v}),\mathcal L(\theta)^{\otimes n}),
\end{align}
as a scheme over $\Spec \Gamma(\mathcal{M}^{\theta}_{\lambda}(Q,\mathbf{v}),\mathcal O_{\mathcal{M}^{\theta}_{\lambda}(Q,\mathbf{v})})=\mathcal{M}_{\lambda}(Q,\mathbf{v})$. Similarly there is an ample line bundle $\mathcal L(s_i\theta)$ on $\mathcal{M}^{s_i\theta}_{s_i\lambda}(Q,s_i\mathbf{v})$ with similar property. We claim that 
\begin{align}\label{eqn: isom of line bundle}
    p^{\theta'*}_{\theta}(\mathcal L(\theta))\cong (\Phi^{\theta',\lambda}_{s_i})^*p^{s_i\theta'*}_{s_i\theta}(\mathcal L(s_i\theta))
\end{align}
In effect, the pullback of $\mathcal L(\theta)$ to $\mu^{-1}(\lambda)^{\theta\mathrm{-ss}}$ is the $G(\vv)$-equivariant line bundle $\prod_{j\in Q_0}\det(V_j^*)^{\otimes \theta_j}$, where $V_j$ is the vector bundle on $T^*R(Q,\vv)$ with fibers being the vector space at $j$'th vertex. Similar fact holds for the pullback of $\mathcal L(s_i\theta)$ to $\mu^{-1}(s_i\lambda)^{s_i\theta\mathrm{-ss}}$. By the construction of $Z^{\theta',\lambda}_i$ and the fact that $\theta_i=(s_i\theta)_i=0$, we have 
\begin{align*}
    p_i^*\xi^*(\mathcal L(\theta))=\prod_{j\neq i}\det(V_j^*)^{\otimes \theta_j}= p'^*_i\xi'^*(\mathcal L(s_i\theta)).
\end{align*}
Here $\xi: \mu^{-1}(\lambda)^{\theta\mathrm{-ss}}\to \mathcal{M}^{\theta}_{\lambda}(Q,\mathbf{v}),\xi':\mu^{-1}(s_i\lambda)^{s_i\theta\mathrm{-ss}}\to \mathcal{M}^{s_i\theta}_{s_i\lambda}(Q,s_i\mathbf{v})$ are quotient maps. Therefore the claim follows. Since $p^{\theta'}_{\theta}$ is a proper birational morphism between normal schemes, we have 
\begin{align}\label{eqn: proj spectra}
    \mathcal{M}^{\theta}_{\lambda}(Q,\mathbf{v})=\Proj \bigoplus_{n\ge 0}\Gamma(\mathcal{M}^{\theta'}_{\lambda}(Q,\mathbf{v}),p^{\theta'*}_{\theta}\mathcal L(\theta)^{\otimes n}).
\end{align}
Similar fact holds for $\mathcal{M}^{s_i\theta}_{s_i\lambda}(Q,s_i\mathbf{v})$. Combining \eqref{eqn: proj spectra} with \eqref{eqn: isom of line bundle}, we get an isomorphism $\Phi^{\theta,\lambda}_{s_i}:\mathcal{M}^{\theta}_{\lambda}(Q,\mathbf{v})\cong \mathcal{M}^{s_i\theta}_{s_i\lambda}(Q,s_i\mathbf{v})$. Obviously the diagram
\begin{center}
\begin{tikzcd}
\mathcal{M}^{\theta}_{\lambda}(Q,\mathbf{v}) \arrow[d]  \arrow[r,"\Phi^{\theta,\lambda}_{s_i}"] & \mathcal{M}^{s_i\theta}_{s_i\lambda}(Q,s_i\mathbf{v}) \arrow[d]\\
\mathcal{M}_{\lambda}(Q,\mathbf{v})  \arrow[r,"\Phi^{0,\lambda}_{s_i}"] & \mathcal{M}_{s_i\lambda}(Q,s_i\mathbf{v}) 
\end{tikzcd}
\end{center}
commutes. 

We claim that $\Phi^{\theta,\lambda}_{s_i}$ does not depend on the choice of $\theta'$. If $\lambda_i\neq 0$ then $\Phi^{\theta,\lambda}_{s_i}$ is just Lusztig-Maffei-Nakajima reflection isomorphism, so we assume that $\lambda_i=0$. In view of the above commutative diagram, it suffices to show that $\Phi^{0,\lambda}_{s_i}$ does not depend on the choice of $\theta'$. Consider the schemes
\begin{align*}
    \mathcal S_{n,m}&=\{X_1,Y_1,X_2,Y_2\:|\: X_1Y_1=0,X_2Y_2=0,Y_1X_1=Y_2X_2\}\\
    &\subset \mathrm{Mat}(n\times (n+m))\times \mathrm{Mat}( (n+m)\times n)\times \mathrm{Mat}(m\times (n+m))\times \mathrm{Mat}((n+m)\times n))\\
    \mathcal A_{n,m}&=\{X_1,Y_1\:|\: X_1Y_1=0\}\subset \mathrm{Mat}(n\times (n+m))\times \mathrm{Mat}( (n+m)\times n)\\
    \mathcal B_{n,m}&=\{X_2,Y_2\:|\: X_2Y_2=0\}\subset \mathrm{Mat}(m\times (n+m))\times \mathrm{Mat}( (n+m)\times m),
\end{align*}
together with obvious projections:
\begin{center}
\begin{tikzcd}
\mathcal A_{n,m} & \mathcal S_{n,m} \arrow[l,"\mathbf p_i" ']  \arrow[r,"\mathbf p'_i"] & \mathcal B_{n,m}.
\end{tikzcd}
\end{center}
The projections are equivariant under the natural $\GL(n)\times \GL(m)\times \GL(n+m)$ actions on $\mathcal S_{n,m},\mathcal A_{n,m},\mathcal B_{n,m}$. Define the quiver $Q^{(i)}$ as the sub-quiver of $Q$ deleting the vertex $i$ and all arrows $a$ such that $h(a)=i$ or $t(a)=i$. And let $\vv^{(i)}$ be the dimension vector of $Q^{(i)}$ such that $\vv^{(i)}_j=\vv_j$. Let $n=\vv_i,m=(s_i\vv)_i$, then it is easy to see that there are closed emdeddings
\begin{align*}
    \mu_{\vv}^{-1}(\lambda)\subset T^*R(Q^{(i)},\vv^{(i)})&\times \mathcal A_{n,m}\subset T^*R(Q,\vv),\\ 
    \mu_{s_i\vv}^{-1}(s_i\lambda)\subset T^*R(Q^{(i)},\vv^{(i)})&\times \mathcal B_{n,m}\subset T^*R(Q,s_i\vv).
\end{align*}
Consider the closed subscheme $Z^{\lambda}_i=\mathbf p_i^{-1}(\mu_{\vv}^{-1}(\lambda))\subset T^*R(Q^{(i)},\vv^{(i)})\times \mathcal S_{n,m}$, it is easy to see that $Z^{\lambda}_i$ is the same as $\mathbf p'^{-1}_i(\mu_{s_i\vv}^{-1}(s_i\lambda))$, then the natural projections
\begin{center}
\begin{tikzcd}
\mu_{\vv}^{-1}(\lambda) & Z^{\lambda}_i \arrow[l]  \arrow[r] & \mu_{s_i\vv}^{-1}(s_i\lambda)
\end{tikzcd}
\end{center}
are $G_i(\vv)$-equivariant. By the construction of \cite[Definition 27]{maffei2002remark}, we see that $Z^{\theta',\lambda}_i$ is an open subscheme of $Z^{\lambda}_i$. Therefore we have a sequence of maps of rings:
\begin{align*}
    \mathbb C[\mathcal M_{\lambda}(Q,\vv)]\to \mathbb C[Z^{\lambda}_i]^{G_i(\vv)}\to \Gamma(Z^{\theta',\lambda}_i,\mathcal O_{Z^{\theta',\lambda}_i})^{G_i(\vv)}=\Gamma(\mathcal M^{\theta'}_{\lambda}(Q,\vv),\mathcal O_{\mathcal M^{\theta'}_{\lambda}(Q,\vv)}).
\end{align*}
Since the composition $\mathbb C[\mathcal M_{\lambda}(Q,\vv)]\to \Gamma(\mathcal M^{\theta'}_{\lambda}(Q,\vv),\mathcal O_{\mathcal M^{\theta'}_{\lambda}(Q,\vv)})$ is isomorphism by Corollary \ref{cor: resolution of singularity}, we see that the first map $\mathbb C[\mathcal M_{\lambda}(Q,\vv)]\to \mathbb C[Z^{\lambda}_i]^{G_i(\vv)}$ is injective. On the other hand, the invariant theory shows that $G_i(\vv)$-invariant subalgebra of $\mathbb C[Z^{\lambda}_i]$ is generated by closed paths with arrows either in the doubled quiver $\overline Q$ or $X_2$ or $Y_2$, the equation $X_2Y_2=0,Y_1X_1=Y_2X_2$ imply that $\mathbb C[Z^{\lambda}_i]^{G_i(\vv)}$ is generated by $\mathbb C[\mu_{\vv}^{-1}(\lambda)]^{G_i(\vv)}$, so the map $\mathbb C[\mathcal M_{\lambda}(Q,\vv)]\to \mathbb C[Z^{\lambda}_i]^{G_i(\vv)}$ is surjective. We conclude that the natural morphism $\overline{\mathbf{p}}_i:Z^{\lambda}_i/^{G_i(\vv)}\to \mathcal M_{\lambda}(Q,\vv)$ is isomorphism, and similarly the natural morphism $\overline{\mathbf{p}}_i':Z^{\lambda}_i/^{G_i(\vv)}\to \mathcal M_{s_i\lambda}(Q,s_i\vv)$ is isomorphism. From the above argument we have $\Phi^{0,\lambda}_{s_i}=\overline{\mathbf{p}}_i'\circ \overline{\mathbf{p}}_i^{-1}$, and the latter does not depend on the choice of $\theta'$, therefore $\Phi^{0,\lambda}_{s_i}$ does not depend on the choice of $\theta'$.
\end{proof}

\section{Proof of Theorem \ref{thm: reducedness}}\label{sec: proof}

In this section we complete the proof of Theorem \ref{thm: reducedness}, thus proving Theorem \ref{thm: reducedness_enhanced} by Proposition \ref{prop: special case implies general case}. Since passing from $Q$ to the support of dimension vector $\vv$ does not affect the flatness, we will assume that $\vv_i>0$ for all $i\in Q_0$ in this section\footnote{Dimension vector such that $\vv_i>0$ for all $i\in Q_0$ is called \textit{sincere}, cf. \cite{crawley2001geometry}.}.

\subsection{Case of $|Q_0|=1$}\label{subsec: case of one vertex}
In this case the vertex set is just one element $Q_0=\{1\}$, and $\Theta=Z=\{0\}$. Suppose that there is only one arrow, then computation using Lemma \ref{lemma: flatness} shows that the only value of $\vv_1$ such that the moment map $\mu$ is flat is $\vv_1=1$, and in this case Theorem \ref{thm: reducedness} trivially holds. Suppose that there are more than one arrows, then an easy computation shows that 
\begin{align*}
    \pp(\vv)> \sum_{t=1}^r\pp(\vv^{(t)}),
\end{align*}
for all decomposition $\mathbf{v}=\mathbf{v}^{(1)}+\cdots+\mathbf{v}^{(r)}$ into non-zero elements in $\mathbb N^{Q_0}$ and $r>1$. In this case $\vv\in \Sigma_0$ (see comments after \cite[Theorem 1.2]{crawley2001geometry} for notation), and in this case Crawley-Boevey shows that $\mu^{-1}(0)$ is reduced and irreducible \cite[Theorem 1.2]{crawley2001geometry}, thus $\mathcal M_0(Q,\vv)$ is reduced hence normal, by \cite[Theorem 1.1]{crawley2001normality}. This finishes the proof of the case when $|Q_0|=1$.

\subsection{Case of $Q_0=\{1,2\}$ and $\vv_1=1$}\label{subsec: case of framed one vertex}
In this case $\vv_2=n>0$ and $(\theta,\lambda)\in \mathbb Q\times \mathbb C$. The main result of this subsection is following:

\begin{proposition}\label{prop: reducedness in rank 1}
If $Q_0=\{1,2\}$ and $\vv_1=1$, then $\mu^{-1}(0)$ is reduced.
\end{proposition}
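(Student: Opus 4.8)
The plan is to combine the flatness hypothesis with Serre's criterion. Since $\mu$ is flat, $\mu^{-1}(0)$ is the fiber of a flat morphism from the smooth variety $T^*R(Q,\vv)$ to the affine space $\g(\vv)^*$, hence a complete intersection of pure dimension $\vv\cdot\vv-1+2\pp(\vv)$; in particular it is Cohen--Macaulay, so it satisfies Serre's condition $S_1$ and will be reduced as soon as it satisfies $R_0$, i.e. as soon as it is generically reduced. Over $\bC$, generic reducedness of the equidimensional scheme $\mu^{-1}(0)$ is equivalent to the density of its smooth locus in each irreducible component. By the standard description of the differential of a moment map, $\mu^{-1}(0)$ is smooth at $x$ precisely when $d\mu_x$ is surjective, equivalently when the stabilizer $G(\vv)_x$ is finite, equivalently when $\End_{\Pi}(x)=\bC$, where $\Pi$ is the preprojective algebra cut out by the relations $\mu=0$. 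Thus the smooth locus of $\mu^{-1}(0)$ is exactly the locus of brick (Schurian) representations, and it suffices to prove that the non-brick locus $\{x\in\mu^{-1}(0):\dim\End_{\Pi}(x)\ge 2\}$ has dimension strictly less than $\vv\cdot\vv-1+2\pp(\vv)$.

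To control this dimension I would stratify $\mu^{-1}(0)$ by representation type $\tau=(k_1,\beta_1;\dots;k_r,\beta_r)$, the $\beta_t$ being the dimension vectors of the simple summands of the semisimplification, and invoke the dimension bound used in the proof of Lemma \ref{lem: characterization of flatness},
\[
\dim\xi^{-1}\big(\mathcal{M}_0(Q,\vv)_\tau\big)\le \vv\cdot\vv-1+\pp(\vv)+\sum_{t=1}^r\pp(\beta_t),
\]
where $\xi:\mu^{-1}(0)\to\mathcal{M}_0(Q,\vv)$ is the quotient map. A whole stratum is too small unless $\sum_t\pp(\beta_t)=\pp(\vv)$; since each $\beta_t$ is a root we have $\pp(\beta_t)\ge 0$, so together with the flatness inequality $\pp(\vv)\ge\sum_t k_t\pp(\beta_t)$ of Lemma \ref{lemma: flatness} this equality forces $k_t=1$ whenever $\pp(\beta_t)>0$ and forces equality in the flatness inequality. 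Here the hypothesis $\vv_1=1$ enters decisively: in any such decomposition exactly one summand, say $\beta_1$, has first coordinate $1$ and it occurs with multiplicity one, while all remaining $\beta_t$ are supported at the vertex $2$.

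The genuinely delicate point, and the main obstacle, is that these \emph{equality strata} with $r\ge 2$ (or some $k_t\ge 2$) can attain the full dimension $\vv\cdot\vv-1+2\pp(\vv)$ and hence contribute irreducible components that do not lie in the closure of the simple locus; for each such component I must show that its generic representation is nonetheless a brick, so that the non-brick part of the stratum is lower dimensional. This is where $\vv_1=1$ is essential: because the space at vertex $1$ is one-dimensional, a single vector (the combined image of the arrows out of vertex $1$), together with its dual, suffices to glue the semisimple constituents into an indecomposable module with $\End_{\Pi}=\bC$. Concretely I expect to prove that the locus on which the vertex-$2$ subrepresentation generated by this image is all of $V_2$, or the dual cogeneration condition, is dense in each equality component, and that such cyclicity (resp. cocyclicity) forces the stabilizer to be trivial. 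The prototype is the framed Jordan quiver of Gan--Ginzburg \cite{gan2006almost}, where the generic representation of $\mu^{-1}(0)$ is cyclic, hence a brick, even though it is never simple. Granting this density statement for all arrow data allowed by flatness, the brick locus is dense, $R_0$ holds, and the reducedness of $\mu^{-1}(0)$ follows.
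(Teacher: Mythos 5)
Your overall strategy --- flatness gives Cohen--Macaulay, hence $S_1$, so by Serre's criterion it suffices to prove generic reducedness, which for a complete intersection of the expected dimension amounts to density of the locus where $d\mu$ is surjective, i.e.\ the brick locus --- is sound, and it is essentially the strategy the paper uses in the one genuinely hard subcase. The dimension count via representation types and the flatness inequality correctly isolates the problematic full-dimensional strata. However, the proof has a genuine gap exactly where you flag it: the statement that the generic point of each full-dimensional equality component is a brick is never proved; you write ``I expect to prove'' and ``granting this density statement,'' and everything rests on that. Worse, the concrete mechanism you propose cannot work uniformly. After reducing to the case with no edge loops and $k=2n-1>1$ arrows from vertex $1$ to vertex $2$ (the only case not already covered by Crawley-Boevey's $\Sigma_0$ theorem or Gan--Ginzburg), $\mu^{-1}(0)$ has a second irreducible component, the closure of $\pi^{-1}I(\vv-\ee_2,\ee_2)$, on which the forward arrows out of vertex $1$ span only an $(n-1)$-dimensional subspace $W\subsetneq V_2$; since there are no loops, the sub-$\Pi$-module generated by $V_1$ is $\bC\oplus W$, which is \emph{always} proper on this component. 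So the cyclicity condition fails identically there, and only the dual (cogeneration) condition can save you: one must check that, subject to the moment map equation $\sum_i a_ib_i=0$ with the $a_i$ confined to the hyperplane $W$, the covectors $b_i$ generically span $V_2^*$, and that this spanning together with $\phi_2|_W=0$ forces every endomorphism to be scalar. That computation is true but is precisely the missing content; without it the brick locus could a priori be empty on that component. The edge-loop cases are likewise not actually dispatched (though they do follow from $\Sigma_0$ and Gan--Ginzburg, which you only gesture at).

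For comparison, the paper sidesteps the brick-density question on the delicate component entirely. It first identifies the two irreducible components via Crawley-Boevey's stratification of $R(Q,\vv)$ by decomposition type of the underlying $Q$-representation: only $I(\vv)$ and $I(\vv-\ee_2,\ee_2)$ give full-dimensional strata when $k=2n-1>1$. The component over $I(\vv)$ contains $\theta$-stable points for $\theta=(-n,1)$ and is therefore generically smooth. For the component over $I(\vv-\ee_2,\ee_2)$, instead of exhibiting bricks, the paper takes the semisimple point $x=x_1\oplus 0$ with $x_1$ simple of dimension $\vv-\ee_2$ and applies the \'etale transverse slice (Lemma~\ref{lem: transverse slice}) to identify a formal neighbourhood with $\hat\mu^{-1}(0)$ for a smaller quiver $(\hat Q,\hat\vv)$ with $\hat\vv\in\Sigma_0$, which is reduced by Crawley-Boevey; this gives generic reducedness of that component without any endomorphism computation. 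If you want to salvage your route, you should either carry out the explicit spanning argument for the $b_i$ on the second component, or adopt the slice reduction.
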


\begin{proof}
Note that edge loop at vertex $1$ only contributes a vector space to $\mu^{-1}(0)$, without loss of generality we can assume that there is no edge loop at vertex $1$. Since the quiver will be doubled when considering moment map, without loss of generality we can assume that all arrows between vertices $1$ and $2$ are pointing from $1$ to $2$, and denote the number of such arrows by $k$. We split the discussion into three situations.
\begin{itemize}
    \item[(1)] $\mathbf v\in \Sigma_{0}$ (see comments after \cite[Theorem 1.2]{crawley2001geometry} for notation), this condition is equivalent to that the inequality in Lemma \ref{lemma: flatness} is strict, i.e. the equality never holds. In this situation \cite[Theorem 1.2]{crawley2001geometry} shows that $\mu^{-1}(0)$ is reduced and irreducible. An easy computation shows that $\mathbf v\in \Sigma_{0}$ exactly when there are more than two edge loops at vertex $2$, or there is one edge loop at vertex $2$ and $k>1$, or there is no edge loop at vertex $2$ and $k\ge 2n$.
    
    \item[(2)] If there is no edge loop in $Q$, then the flatness of $\mu$ is equivalent to $k\ge 2n-1$ (see Lemma \ref{lemma: flatness}). The case $k\ge 2n$ has been discussed above. If $k=n=1$, then straightforward computation finds that $\mu^{-1}(0)$ is reduced. The remaining cases are $k= 2n-1>1$.
    
    \item[(3)] If there is one edge loop and and $k=1$, then by the main result of Gan-Ginzburg \cite{gan2006almost}, $\mu^{-1}(0)$ is reduced in this case.
\end{itemize}
The idea of proof in the remaining cases in (2) is to show that every irreducible component of $\mu^{-1}(0)$ is generically reduced, then flatness of $\mu$ implies the Cohen-Macaulay property of $\mu^{-1}(0)$, which in turn implies that $\mu^{-1}(0)$ is reduced.\\
    
\textbf{Remaining part of situation (2).} We claim that $\mu^{-1}(0)$ has exactly two irreducible components. In effect consider the morphism $\pi: \mu^{-1}(0)\to R(Q,\mathbf{v})$ defined by forgetting the cotangent direction, then \cite[Lemma 4.3]{crawley2001geometry} implies that $\mu^{-1}(0)$ is union of constructible subsets of which the maximal dimensional ones are $\pi^{-1}I(\mathbf{v}^{(1)},\cdots,\mathbf{v}^{(r)})$, where $\mathbf{v}=\mathbf{v}^{(1)}+\cdots+\mathbf{v}^{(r)}$ is a decomposition of $\mathbf{v}$ into non-zero elements in $\mathbb N^{Q_0}$ such that $\mathbf{v}^{(1)}_1=1$, and 
\begin{align}\label{eqn: component}
    \pp(\mathbf{v})=\pp(\mathbf{v}^{(1)})+\cdots+\pp(\mathbf{v}^{(r)}).
\end{align}
Here $I(\mathbf{v}^{(1)},\cdots,\mathbf{v}^{(r)})$ is the constructible subset of $R(Q,\mathbf{v})$ consisting of the representations whose indecomposable summands have dimension $\mathbf{v}^{(1)},\cdots,\mathbf{v}^{(r)}$. Under the assumption that $k= 2n-1>1$, the only decompositions such that the equation \eqref{eqn: component} hold are $(\mathbf{v}^{(1)},\mathbf{v}^{(2)})=(\vv-\ee_2,\ee_2)$ and $\mathbf{v}^{(1)}=\mathbf{v}$. Here $\ee_2$ is the dimension vector $\ee_{2i}=\delta_{i2}$. Note that both $I(\mathbf{v}-\ee_2,\ee_2)$ and $I(\mathbf{v})$ consist of a single $\GL(n)\times \GL(k)$-orbit (by Gauss elimination), therefore preimages of both $\pi^{-1}I(\mathbf{v}-\ee_2,\ee_2)$ and $\pi^{-1}I(\mathbf{v})$ are irreducible by \cite[Lemma 3.4]{crawley2001geometry}, and $$\dim \mu^{-1}(0)\backslash (\pi^{-1}I(\mathbf{v}-\ee_2,\ee_2)\cup \pi^{-1}I(\mathbf{v}))<\dim \mu^{-1}(0).$$ $\mu^{-1}(0)$ is Cohen-Macaulay, so it is equidimensional, thus $$\mu^{-1}(0)=\overline{\pi^{-1}I(\mathbf{v}-\ee_2,\ee_2)}\cup \overline{\pi^{-1}I(\mathbf{v})}.$$ Notice that $\pi^{-1}I(\mathbf{v})$ contains $\theta$-stable representations for $(\theta_1,\theta_2)=(-n,1)$, thus $\pi^{-1}I(\mathbf{v})$ is generically smooth.

For $\pi^{-1}I(\mathbf{v}-\ee_2,\ee_2)$, consider a simple representation $x$ of  $(Q,\mathbf v-\ee_2)$ (simple representation exists for dimension vector $\vv-\ee_2$ by our previous discussion in the situation (1)) and take its direct sum with a trivial representation and regarded as a representation of $(Q,\mathbf v)$, still denoted by $x$, then $x$ is semisimple and $\pi(x)\in I(\mathbf{v}-\ee_2,\ee_2)$. According to Lemma \ref{lem: transverse slice}, there is an \' etale transverse slice of $G(\mathbf{v})\cdot x$ in $\mu^{-1}(0)$, which is \' etale locally isomorphic to an open neighborhood $U$ of $0\in \hat\mu^{-1}(0)$, where $\hat\mu$ is the classical moment map for the quiver $(\hat Q,\hat{\mathbf{v}})$. Here $\hat Q$ is the quiver with vertex set $\hat Q_0=\{1,2\}$, and the number of arrows between vertices $1$ and $2$ is $k+1-n$, which equals to $n>1$, and $\hat{\mathbf{v}}_1=\hat{\mathbf{v}}_2=1$. Note that $\hat{\mathbf{v}}\in \Sigma_0$ and this reduces to the situation (1), thus $\mu^{-1}(0)$ is reduced in an open neighborhood of $x$. Finally $x$ is in the irreducible component $\overline{\pi^{-1}I(\mathbf{v}-\ee_2,\ee_2)}$, so the irreducible component $\overline{\pi^{-1}I(\mathbf{v}-\ee_2,\ee_2)}$ is generically reduced.
\end{proof}

\begin{corollary}\label{cor: thm in rank 1}
If $Q_0=\{1,2\}$ and $\vv_1=1$, then Theorem \ref{thm: reducedness} holds.
\end{corollary}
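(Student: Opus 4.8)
The plan is to deduce Corollary \ref{cor: thm in rank 1} from Proposition \ref{prop: reducedness in rank 1} together with the machinery already assembled in Section \ref{sec: preliminaries}. Proposition \ref{prop: reducedness in rank 1} establishes that $\mu^{-1}(0)$ is reduced in the case $Q_0=\{1,2\}$, $\vv_1=1$, but Theorem \ref{thm: reducedness} asks for reducedness of $\mathcal{M}^{\theta}_{\lambda}(Q,\vv)$ for \emph{all} $(\theta,\lambda)\in\Theta\times Z$, so the work is to propagate reducedness away from the affine GIT quotient at $\lambda=0$ to arbitrary stability parameter and arbitrary central character.

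\begin{proof}[Proof sketch]
First I would pass from $\mu^{-1}(0)$ being reduced to the statement that $\mathcal{M}^{\theta}_{0}(Q,\vv)$ is reduced for every $\theta$. Since $\mathcal{M}^{\theta}_{0}(Q,\vv)=\mu^{-1}(0)^{\theta\mathrm{-ss}}/G(\vv)$ and $\mu^{-1}(0)^{\theta\mathrm{-ss}}$ is an open subscheme of the reduced scheme $\mu^{-1}(0)$, the semistable locus is itself reduced; taking the GIT quotient of a reduced scheme by a reductive group yields a reduced scheme (the invariant subring of a reduced ring is reduced). In particular $\mathcal{M}_0(Q,\vv)=\mathcal{M}^{0}_{0}(Q,\vv)$ is reduced, hence normal by \cite[Theorem 1.1]{crawley2001normality} applied via Lemma \ref{lem: characterization of flatness}; this normality persists in the family to give normality of $\mathcal M^{\theta}_Z(Q,\vv)$, which is exactly the hypothesis needed to invoke Lemma \ref{lemma: normality of family implies reducedness}.

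\textbf{The key step} is then to apply Lemma \ref{lemma: normality of family implies reducedness}. Here $\mu$ is flat by hypothesis (we are in the flat cases catalogued in Proposition \ref{prop: reducedness in rank 1}), and I have just argued that $\mathcal M^{\theta}_Z(Q,\vv)$ is normal. Choosing $\theta'$ generic in a Euclidean neighborhood of $\theta$ with $\mu^{-1}(Z)^{\theta'\mathrm{-ss}}\subset\mu^{-1}(Z)^{\theta\mathrm{-ss}}$, Lemma \ref{lemma: generic case} guarantees that $\mathcal{M}^{\theta'}_{\lambda}(Q,\vv)$ is reduced for every $\lambda$ (one checks that either $\lambda\in Z_0$ or, since $Z\subset\mathbb R^{Q_0}$ in this rank-one framed case the hypothesis $\lambda\in\mathbb R^{Q_0}$ of Lemma \ref{lemma: generic case} is automatic). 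Lemma \ref{lemma: normality of family implies reducedness} then transports reducedness from $\mathcal{M}^{\theta'}_{\lambda}(Q,\vv)$ to $\mathcal{M}^{\theta}_{\lambda}(Q,\vv)$ via the isomorphism $\mathcal O_{\mathcal{M}^{\theta}_{\lambda}(Q,\vv)}\cong p^{\theta'}_{\theta*}\mathcal O_{\mathcal{M}^{\theta'}_{\lambda}(Q,\vv)}$, completing the argument for all $(\theta,\lambda)$.

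\textbf{The main obstacle} I anticipate is the verification that the family $\mathcal M^{\theta}_Z(Q,\vv)$ is normal, as required to feed Lemma \ref{lemma: normality of family implies reducedness}: reducedness of individual fibers is weaker than normality of the total space over $Z$. I expect to resolve this by noting that flatness of $\mu$ makes $\mu^{-1}(Z)$ a complete intersection in $T^*R(Q,\vv)\times Z$, hence Cohen-Macaulay, and that the reduced fibers together with Serre's criterion $(R_1)+(S_2)$ upgrade reducedness of the family to normality once the generic smoothness supplied by Remark \ref{rmk: smoothness} over the generic stratum is in hand. The remaining points are bookkeeping: that $\Theta$ and $Z$ are one-dimensional in the $\vv_1=1$ case so that the genericity conditions of Definition \ref{def: generic stability} are easy to satisfy, and that passing to the support of $\vv$ (as arranged at the start of Section \ref{sec: proof}) keeps us inside the stated hypotheses.
\end{proof}
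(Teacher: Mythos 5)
Your proposal is correct, and its main declared route is essentially the paper's own: split off the cases where $(\theta,\lambda)\neq(0,0)$ (there $(\theta,\lambda)$ is automatically generic and $\vv$ is indivisible, so Remark \ref{rmk: smoothness} gives smoothness), then for $\theta=\lambda=0$ establish normality of the family $\mathcal M_Z(Q,\vv)$ from Cohen--Macaulayness of $\mu^{-1}(Z)$, smoothness of $\mu^{-1}(Z\setminus 0)$ and reducedness of $\mu^{-1}(0)$ via Serre's criterion, and feed this into Lemma \ref{lemma: normality of family implies reducedness} with a generic $\theta'$. What is worth pointing out is that your opening observation is actually a shortcut that makes all of that machinery unnecessary: since $\mu^{-1}(0)^{\theta\mathrm{-ss}}$ is an open subscheme of the reduced scheme $\mu^{-1}(0)$ (Proposition \ref{prop: reducedness in rank 1}), and a GIT quotient of a reduced scheme by a reductive group is reduced (the relevant graded rings of semi-invariants are subrings of a reduced ring), $\mathcal M^{\theta}_0(Q,\vv)$ is reduced for \emph{every} $\theta$ at once; combined with the smoothness for $\lambda\neq 0$ this already proves the corollary, whereas the paper only handles $\theta=\lambda=0$ this way indirectly through Lemma \ref{lemma: normality of family implies reducedness}. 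Two small corrections: normality of the central fiber does not by itself ``persist in the family'' --- you need either the Serre-criterion argument you sketch later or the standard fact that a flat family with normal fibers over a normal base is normal; and the claim that $Z\subset\mathbb R^{Q_0}$ is false ($Z$ is a complex line here), but this is harmless because for $\lambda\neq 0$ one has $\lambda\in Z_0$, so the first alternative in the hypothesis of Lemma \ref{lemma: generic case} applies.
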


\begin{proof}
If either $\theta$ or $\lambda$ is non-zero, then $(\theta,\lambda)$ satisfies \eqref{eqn: generic stability}, note that $\vv$ is obviously indivisible, thus $\mathcal M^{\theta}_{\lambda}(Q,\vv)$ and $\mathcal M^{\theta}_{Z}(Q,\vv)$ are smooth by Remark \ref{rmk: smoothness}. So we focus on the case $\theta=\lambda=0$. $\mu^{-1}(0)$ is reduced by Proposition \ref{prop: reducedness in rank 1}, and since $\mu^{-1}(Z)$ is Cohen-Macaulay and $\mu^{-1}(Z\backslash 0)$ is smooth, we conclude that $\mu^{-1}(Z)$ is normal, and henceforth $\mathcal M_Z(Q,\vv)$ is normal. By Lemma \ref{lemma: normality of family implies reducedness}, $\mathcal M_{0}(Q,\vv)$ is reduced.
\end{proof}

\subsection{General cases}
The last ingredient we need from Crawley-Boevey is the following technical lemma:
\begin{lemma}[{\cite[Corollary 7.2]{crawley2001normality}}]\label{Lemma: Normality}
Let X be a scheme over $\mathbb C$ with a reductive group $G$-action, assume that
\begin{itemize}
    \item[(1)] Categorical quotient $q:X\to X/G$ exists and $q$ is affine,
    \item[(2)] There is an open $U\subset X/G$ such that $U$ is normal, and its complement $Y$ has codimension $\ge 2$ in $X/G$,
    \item[(3)] $X$ has property $(S_2)$ and $q^{-1}(Y)$ has codimension $\ge 2$ in X.
\end{itemize}
Then $X/G$ is normal.
\end{lemma}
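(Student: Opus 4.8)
The plan is to deduce normality from Serre's criterion: a noetherian $\mathbb C$-scheme is normal if and only if it satisfies $(R_1)$ and $(S_2)$. Condition (2) hands us $(R_1)$ almost for free, since any point of $X/G$ of codimension $\le 1$ must lie in $U$ (its complement $Y$ having codimension $\ge 2$), and $U$ is normal, hence regular in codimension one. So the entire content is the verification of $(S_2)$, and this is where the reductivity of $G$ and condition (3) enter. Because $q$ is affine by (1), the question is local on $X/G$: I would fix an affine open $\Spec R \subseteq X/G$, set $\Spec A = q^{-1}(\Spec R)$, and use that $R = A^G$ as $q$ is a categorical quotient. Over $\mathbb C$ a reductive group is linearly reductive, so the Reynolds operator realizes $R$ as an $R$-module direct summand of $A$; this splitting is the one indispensable use of reductivity.

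For $(S_2)$ I would check the depth inequality $\operatorname{depth}\mathcal O_{X/G,x}\ge \min(2,\dim \mathcal O_{X/G,x})$ at every point $x$. Points of codimension $\le 1$ and, more generally, all points lying in $U$ are settled by the normality (hence $(S_2)$) of $U$; since $Y$ has codimension $\ge 2$, the only remaining points are $x\in Y$, all of codimension $\ge 2$, where I must prove $\operatorname{depth}\ge 2$. I would argue through local cohomology. Localizing the splitting at the prime of $x$ keeps $R_x$ an $R_x$-module direct summand of $A_x:=A\otimes_R R_x$, and since local cohomology supported at a closed set is computed by a \v Cech complex and commutes with $R$-linear direct summands, $H^i_{\mathfrak m_x}(R_x)$ is a direct summand of $H^i_{\mathfrak m_x}(A_x)$. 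The latter is the local cohomology of $A_x$ supported along the fiber $q^{-1}(x)$, which by (3) has codimension $\ge 2$ in $\Spec A_x$ (using $q^{-1}(\overline{\{x\}})\subseteq q^{-1}(Y)$). As $A$, hence $A_x$, satisfies $(S_2)$, a closed subset of codimension $\ge 2$ forces $H^0=H^1=0$ of the corresponding local cohomology; therefore $H^0_{\mathfrak m_x}(R_x)=H^1_{\mathfrak m_x}(R_x)=0$, i.e. $\operatorname{depth}R_x\ge 2$. This yields $(S_2)$ and completes the proof.

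The heart of the matter, and the step I expect to be the main obstacle, is this $(S_2)$ descent. It rests on combining two facts that must be set up carefully: first, that $(S_2)$ of $A$ together with the codimension bound of (3) makes the local cohomology of $A$ vanish in degrees $0$ and $1$ along the relevant closed set; and second, that this vanishing passes to the invariants $R$ through the Reynolds splitting. The codimension hypothesis (3) is exactly the input that triggers the first fact, and reductivity is exactly what makes the second work. A minor technical point to check along the way is that localizing at $x$ does not decrease the codimension of the fiber preimage, which follows from the containment $q^{-1}(\overline{\{x\}})\subseteq q^{-1}(Y)$ and condition (3).
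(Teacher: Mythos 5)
The paper does not prove this lemma at all: it is quoted verbatim from Crawley--Boevey (\cite[Corollary 7.2]{crawley2001normality}), so there is no in-text argument to compare against. Your proof is correct and essentially reconstructs the content of that reference by a slightly different route. Crawley--Boevey's argument is global: the $(S_2)$ property of $X$ together with $\mathrm{codim}\, q^{-1}(Y)\ge 2$ gives a Hartogs-type extension $\Gamma(X,\mathcal O_X)\cong\Gamma(X\setminus q^{-1}(Y),\mathcal O_X)$ on each affine piece, and taking $G$-invariants identifies $\mathcal O(X/G)$ with $\Gamma(U,\mathcal O_U)$, which is a normal ring. Your version localizes this: you verify Serre's criterion directly, getting $(R_1)$ from hypothesis (2) and $(S_2)$ by descending the vanishing of $H^0$ and $H^1$ of local cohomology from $A_x$ to its Reynolds direct summand $R_x$. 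The two arguments rest on exactly the same two pillars (depth $\ge 2$ along a codimension-$2$ locus of an $(S_2)$ scheme, and linear reductivity to split off the invariants), and all the technical steps you flag --- preservation of height under localization, $V(\mathfrak m_xA_x)$ being the fiber over $x$, additivity of local cohomology over direct summands --- check out. The one point to make explicit is that ``categorical quotient with $q$ affine'' must be read in the GIT sense of a good quotient, i.e.\ $\mathcal O_{X/G}=(q_*\mathcal O_X)^G$ on affine opens, since that identification $R=A^G$ is what the Reynolds operator splits; this holds in every application of the lemma in this paper, where $q$ is a GIT quotient map of a finite-type scheme by $G(\vv)$, and it also guarantees the Noetherianity you implicitly use.
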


\begin{proof}[Proof of Theorem \ref{thm: reducedness}]
We assume that $Q$ is connected, otherwise the moment map can not be flat (see Remark \ref{rmk: connected components}). Define a partial order $\prec$ on the set of $(Q,\vv)$ by
\begin{center}
    $(Q',\vv')\prec (Q,\vv)$ if $|\vv'|<|\vv|$, or $|\vv'|=|\vv|$ and $|Q'|\ge 3>|Q|$.
\end{center}
Here $|\mathbf v|=\sum_{i\in Q_0}\mathbf v_i$. One can verify that $\prec$ is indeed a partial order. Note that if $(Q'',\vv'')\prec (Q',\vv')\prec (Q,\vv)$, then $|\vv''|<|\vv|$, therefore every chain $(Q,\vv)\succ (Q',\vv')\succ\cdots$ terminates after finite steps. We prove the theorem by induction on this partial order. Note that $|Q_0|=1$ case has been proven in the subsection \ref{subsec: case of one vertex}, so we assume that $|Q_0|\ge 2$ and the theorem is true for all $(Q',\vv')$ with flat moment map such that $(Q',\vv')\prec (Q,\vv)$.\\

\noindent $\bullet$ If $\theta$ is generic and $\vv$ is indivisible, then Remark \ref{rmk: smoothness} shows that $\mathcal{M}^{\theta}_{\lambda}(Q,\vv)$ is smooth. \\

\noindent $\bullet$ If $\theta$ is generic and $\vv$ is divisible, i.e. $\vv=n\ww, n\in \mathbb N_{>1}, \ww\in \mathbb N^{Q_0}$, then for arbitrary $\lambda\in Z$ we take $x\in \mu^{-1}(\lambda)^{\theta\mathrm{-ss}}$ such that $G(\mathbf{v})\cdot x$ is closed in $\mu^{-1}(\lambda)^{\theta\mathrm{-ss}}$. Either $x$ is simple, or $x$ decomposes as $$x=x_1^{\oplus k_1}\oplus\cdots\oplus x_r^{\oplus k_r},$$ where $x_t$ are $\theta$-stable with dimension vectors $s_t\ww$. If $x$ is simple then $\mu$ is smooth at $x$. If $x$ is decomposable, then Lemma \ref{lem: transverse slice} shows that there is an isomorphism
\begin{align*}
    \mathcal{M}^{\theta}_{\lambda}(Q,\mathbf{v})^{\wedge}_x   \cong \mathcal{M}_{0}(\hat Q,\hat\vv)^{\wedge}_0
\end{align*}
for a new quiver $(\hat Q,\hat\vv)$. We claim that
$$|\hat\vv|<|\vv|.$$In effect, In effect, the equation $\vv=\sum_t k_ts_t\ww$ implies that $|\hat\vv|\le|\vv|$ and the equality holds if and only $|\ww|=1$, but if this is the case then $\vv$ is supported at a single vertex thus $|Q_0|=1$, this contradicts with the assumption that $|Q_0|\ge 2$. Therefore $(\hat Q,\hat v)\prec (Q,\vv)$ and by induction hypothesis the Theorem \ref{thm: reducedness} is true for generic $\theta$.\\

\noindent $\bullet$ If $|Q_0|\ge 3$, then we take $\lambda\neq 0$ and take $x\in \mu^{-1}(\lambda)^{\theta\mathrm{-ss}}$ such that $G(\mathbf{v})\cdot x$ is closed in $\mu^{-1}(\lambda)^{\theta\mathrm{-ss}}$. Either $x$ is simple, or $x$ decomposes as $$x=x_1^{\oplus k_1}\oplus\cdots\oplus x_r^{\oplus k_r},$$ where $x_t$ are $\theta$-stable with dimension vectors $\mathbf{v}^{(t)}$. If $x$ is simple then $\mu$ is smooth at $x$. If $x$ is decomposable, then Lemma \ref{lem: transverse slice} shows that there is an isomorphism
\begin{align*}
    \mathcal{M}^{\theta}_{\lambda}(Q,\mathbf{v})^{\wedge}_x   \cong \mathcal{M}_{0}(\hat Q,\hat\vv)^{\wedge}_0
\end{align*}
for a new quiver $(\hat Q,\hat\vv)$. We claim that
$$|\hat\vv|<|\vv|.$$
In effect, the equation $\vv=\sum_t k_t\vv^{(t)}$ implies that $|\hat\vv|\le|\vv|$ and the equality holds if and only $|\vv^{(t)}|=1$ for all $t$, and $|\vv^{(t)}|=1$ is equivalent to $\vv^{(t)}=\ee_{i_t}$ for some vertex $i_t\in Q_0$, therefore $|\hat\vv|=|\vv|$ only happens when $\lambda\cdot \ee_i=0$ for all $i\in Q_0$, but this forces $\lambda=0$, which contradicts with our choice of $\lambda$. Hence we have 
$$(\hat Q,\hat\vv)\prec (Q,\vv)$$
and we conclude that $\mathcal{M}^{\theta}_{\lambda}(Q,\mathbf{v})$ is normal, because for every point in $\mathcal{M}_{\lambda}(Q,\mathbf{v})$ there is a preimage in $\mu^{-1}(\lambda)^{\theta\mathrm{-ss}}$ with closed $\GL(\vv)$-orbit. By the flatness of $\mu$, $\mathcal{M}^{\theta}_{Z}(Q,\mathbf{v})|_{Z\backslash 0}$ is normal. Again by the flatness of $\mu$, $\mathcal{M}^{\theta}_{0}(Q,\mathbf{v})$ has codimension $|Q_0|-1\ge 2$ in $\mathcal{M}^{\theta}_{Z}(Q,\mathbf{v})$, and  $\mu^{-1}(0)^{\theta\mathrm{-ss}}$ has codimension $|Q_0|-1\ge 2$ in $\mu^{-1}(Z)^{\theta\mathrm{-ss}}$. Since $\mu^{-1}(Z)^{\theta\mathrm{-ss}}$ is Cohen-Macaulay, we can apply the Lemma \ref{Lemma: Normality} and conclude that $\mathcal{M}^{\theta}_{Z}(Q,\mathbf{v})$ is normal. Since Theorem \ref{thm: reducedness} is true for $\mathcal M^{\theta}_{\lambda}(Q,\vv)$ with generic $\theta$, we can apply Lemma \ref{lemma: normality of family implies reducedness} thus Theorem \ref{thm: reducedness} is true for all $\mathcal M^{\theta}_{\lambda}(Q,\vv)$.\\

\noindent $\bullet$ If $|Q_0|=2$, the only $(\theta,\lambda)$ that is not generic is $(0,0)$, so we only need to show that $\mathcal M_Z(Q,\vv)$ is normal, according to Lemma \ref{lemma: generic case} and \ref{lemma: normality of family implies reducedness}. In the view of Lemma \ref{Lemma: Normality}, we need to show that there exists a Zariski open subset $U\subset \mathcal M_0(Q,\vv)$ such that $U$ is reduced and $\dim\xi^{-1}(\mathcal M_0(Q,\vv)\backslash U)<\dim \mu^{-1}(0)$ where $\xi:\mu^{-1}(0)\to \mathcal M_0(Q,\vv)$ is the quotient map. Lemma \ref{lem: transverse slice} shows that for every $x\in \mathcal M_0(Q,\vv)$, the formal completion $\mathcal M_0(Q,\vv)^{\wedge}_x$ is isomorphic to $\mathcal M_0(\hat Q,\hat \vv)^{\wedge}_0$ for some $(\hat Q,\hat \vv)$. So it is enough to show that there exists a constructible subset $W\subset \mathcal M_0(Q,\vv)$ such that $\forall x\in \mathcal M_0(Q,\vv)\backslash W$ the quiver $(\hat Q,\hat \vv)$ associated to $x$ is $\prec (Q,\vv)$, and $\dim\xi^{-1}(W)<\dim \mu^{-1}(0)$.

The prospective choice of $W$ is $\bigcup_{\tau}\mathcal M_0(Q,\vv)_{\tau}$ where $\tau$ are the representation types
\begin{align}
    \tau=(k_1,\vv^{(1)};\cdots;k_r,\vv^{(r)}),
\end{align}
such that $\vv^{(t)}$ are $\ee_1$ or $\ee_2$ for all $t$. Precisely, we are going to show that one of the following situations must happen:
\begin{itemize}
    \item[(a)] $\pp(\vv)>\vv_1\pp(\ee_1)+\vv_2\pp(\ee_2)$.
    \item[(b)] For representation type $\tau=(k_1,\vv^{(1)};\cdots;k_r,\vv^{(r)})$ as above, and assume moreover
\begin{align*}
    \pp(\vv)=\sum_{t=1}^r\pp(\vv^{(t)}),
\end{align*}
then $r$ must be greater than $2$.
\end{itemize}
If (a) is true, then we take $W=\bigcup_{\tau}\mathcal M_0(Q,\vv)_{\tau}$, where $\tau=(k_1,\vv^{(1)};\cdots;k_r,\vv^{(r)})$, so $\dim\xi^{-1}(W)<\dim \mu^{-1}(0)$ according to \cite[Corollary 6.4]{crawley2001normality}, and $\forall x\in \mathcal M_0(Q,\vv)\backslash W$ the associated $(\hat Q,\hat \vv)$ to $x$ must have $|\hat\vv|<|\vv|$ (the argument is the same as the $|Q_0|\ge 3$ case). If (b) is true, then we take $W=\bigcup_{\tau}\mathcal M_0(Q,\vv)_{\tau}$, where $\tau=(k_1,\vv^{(1)};\cdots;k_r,\vv^{(r)})$ such that
\begin{align*}
    \pp(\vv)>\sum_{t=1}^r\pp(\vv^{(t)}).
\end{align*}
In this situation $\dim\xi^{-1}(W)<\dim \mu^{-1}(0)$ according to \cite[Corollary 6.4]{crawley2001normality}, and $\forall x\in \mathcal M_0(Q,\vv)\backslash W$ the associated $(\hat Q,\hat \vv)$ to $x$ must have $|\hat\vv|<|\vv|$ or $|\hat{Q}_0|\ge 3$. Both situations imply the normality of $\mathcal M_Z(Q,\vv)$ by Lemma \ref{Lemma: Normality}.

To show that one of (a) or (b) must happen, we proceed case-by-case. Let $(\vv_1,\vv_2)=(N,K)$ and we can assume that $N\ge 2, K\ge 2$ because the case of $\vv_1=1$ (or symmetrically $\vv_2=1$) has been proven in Corollary \ref{cor: thm in rank 1}. Since the quiver will be doubled when considering moment map, without loss of generality we can assume that all arrows between vertices $1$ and $2$ are pointing from $1$ to $2$, and the adjacency matrix is
\begin{align}
    \begin{bmatrix}
    a & b\\
    0 & c
    \end{bmatrix},
\end{align}
note that $b>0$ (in order that $Q$ is connected), and we assume that $a\le c$ (otherwise we just reverse the direction of arrows).\\

\noindent\textbf{(1)} $c\ge a\ge 1$. We claim that (a) is true in this case. In effect, 
\begin{align*}
    \pp(\vv)-N\pp(\ee_1)-K\pp(\ee_2)&=(a-1)(N-1)N+(c-1)(K-1)K+bNK+1-N-K\\
    &\ge NK-N-K+1>0.
\end{align*}

\noindent\textbf{(2)} $a=0,c>1$. We claim that (a) is true in this case. Consider $\ww=(\ww_1,\ww_2)=(1,K)$, then it is easy to see that $\ww\in \Sigma_0$, thus $\pp(\ww)>\pp(\ee_1)+K\pp(\ee_2)$, therefore
\begin{align*}
    \pp(\vv)\ge (N-1)\pp(\ee_1)+\pp(\ww)>N\pp(\ee_1)+K\pp(\ee_2).
\end{align*}

\noindent\textbf{(3)} $a=0,b>1$. We claim that (a) is true in this case. Consider $\ww=(\ww_1,\ww_2)=(1,1)$, then it is easy to see that $\ww\in \Sigma_0$, thus $\pp(\ww)>\pp(\ee_1)+\pp(\ee_2)$, therefore
\begin{align*}
    \pp(\vv)\ge (N-1)\pp(\ee_1)+(K-1)\pp(\ee_2)+\pp(\ww)>N\pp(\ee_1)+K\pp(\ee_2).
\end{align*}

\noindent\textbf{(4)} $(a,b,c)=(0,1,0)$. We claim that this case can not happen. In fact
\begin{align*}
    \pp(\vv)=-N^2-K^2+NK+1=1-(N-K)^2-NK\le 1-NK<0,
\end{align*}
by our assumption that $N\ge 2,K\ge 2$. This contradicts with the flatness assumption.

\noindent\textbf{(5)} $(a,b,c)=(0,1,1)$ and $K> N+1$. We claim that (a) is true in this case. In fact
\begin{align*}
    \pp(\vv)-N\pp(\ee_1)-K\pp(\ee_2)=NK-N^2+1-K=(N-1)(K-N-1)>0.
\end{align*}

\noindent\textbf{(6)} $(a,b,c)=(0,1,1)$ and $K< N+1$. This case can not happen, because $$\pp(\vv)-N\pp(\ee_1)-K\pp(\ee_2)=(N-1)(K-N-1)<0,$$this ccontradicts with the flatness assumption.

\noindent\textbf{(7)} $(a,b,c)=(0,1,1)$ and $K= N+1\ge 4$. This case can not happen, because 
\begin{align*}
    \pp(\vv)-(N-2)\pp(\ee_1)-\pp(2\ee_1+K\ee_2)=NK-N^2+1-(2K-3)=3-K<0,
\end{align*}
this contradicts with the flatness assumption.

\noindent\textbf{(8)} $(a,b,c)=(0,1,1)$ and $N=2, K=3$. We claim that (b) is true in this case. By a straightforward computation we see that the only representation type $\tau=(k_1,\vv^{(1)};\cdots;k_r,\vv^{(r)})$ such that $\pp(\vv)=\sum_{t=1}^r\pp(\vv^{(t)})$ is
\begin{align*}
    \tau=(2,\ee_1;1,\ee_2;1,\ee_2;1,\ee_2),
\end{align*}
therefore $r=4$ and (b) holds.\\

All cases have been covered, and this finishes the proof of Theorem \ref{thm: reducedness}.
\end{proof}

\section{Type A Dynkin Quivers and Finite W-Algebras}\label{sec: W-algebra}

\subsection{Flatness of moment maps for affine type A Dynkin quivers with framings}\label{subsec: affine type A}

Let $(Q,\mathbf{v},\dd)$ be a framed quiver with framing vector $\dd$ (assume $\dd\neq 0$). Following Crawley-Boevey, we define the associated unframed quiver $(Q^{\dd},\mathbf{v}^{\dd})$ as $Q^{\dd}_0=Q_0\sqcup \{\infty\}$ (union of vertices of $Q$ with an extra vertex denoted by $\infty$), and arrows in $Q^{\dd}$ are those from $Q$ and for each vertex $i\in Q_0$ attach $\dd_i$-copies of arrows from $\infty$ to $i$, and set $\mathbf{v}^{\dd}_i=\mathbf{v}_i$ if $i\in Q_0$ and $\mathbf{v}^{\dd}_{\infty}=1$. From the construction we see that the group
\begin{align}
    G(\mathbf{v}^{\dd})=\prod_{i\in Q^{\dd}_0}\GL(\mathbf{v}^{\dd}_i)/
    \mathbb C^{\times}\cong \prod_{i\in Q_0}\GL(\mathbf{v}_i)=:\GL(\mathbf{v})
\end{align}
acts on $R(Q^{\dd},\mathbf{v}^{\dd})$. Note that the deformation space $Z$ can be identified with $\prod_{\vv_i\neq 0}\mathbb C$, the identification is by $\lambda\mapsto \lambda^{\dd}$ such that $\lambda^{\dd}_i=\lambda_i$ for $i\in Q_0$ and $\lambda^{\dd}_{\infty}=-\sum_{i\in Q_0}\lambda_i\vv_i$. Similarly the stability space $\Theta$ can be identified with $\prod_{\vv_i\neq 0}\mathbb Q$ via $\theta\mapsto \theta^{\dd}$ such that $\theta^{\dd}_i=\theta_i$ for $i\in Q_0$ and $\theta^{\dd}_{\infty}=-\sum_{i\in Q_0}\theta_i\vv_i$. We define the Nakajima quiver schemes associated to $(Q,\vv,\dd)$ as:
\begin{align}
    \mathcal M^{\theta}_Z(Q,\vv,\dd):=\mathcal M^{\theta^{\dd}}_Z(Q^{\dd},\vv^{\dd}),\; \mathcal M^{\theta}_{\lambda}(Q,\vv,\dd):=\mathcal M^{\theta^{\dd}}_{\lambda^{\dd}}(Q^{\dd},\vv^{\dd}).
\end{align}

\begin{proposition}\label{prop: flatness in type A}
If $Q$ is an affine type A Dynkin quiver, let $\vv$ and $\dd$ be dimension vector and framing vector (assume $\dd\neq 0$), then the moment map $\mu$ is flat if and only if the following condition is satisfied:
\begin{align}\label{eqn: flatness type A}
    \ee_I\cdot(\dd-C_Q\vv)\ge -1,
\end{align}
for arbitrary subset $I\subset Q_0$ such that nodes in $I$ are connected by arrows in $Q$, and $\ee_I=\sum_{i\in I}\ee_i$. 
\end{proposition}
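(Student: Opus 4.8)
The plan is to translate flatness into Crawley-Boevey's numerical criterion (Lemma \ref{lemma: flatness}) applied to the associated unframed quiver $(Q^\dd,\vv^\dd)$, and then to exploit the very explicit shape of the Tits form of the cyclic quiver. First I would record the two elementary computations that drive everything. Writing $\vv^\dd=\vv+\ee_\infty$, a direct expansion gives $\pp_{Q^\dd}(\beta+\ee_\infty)=\pp(\beta)+\dd\cdot\beta-1$, while $\pp_{Q^\dd}(\beta)=\pp(\beta)$ for $\beta$ supported on $Q$. Since $\vv^\dd_\infty=1$, in any decomposition of $\vv^\dd$ the vertex $\infty$ sits in exactly one summand, so the criterion of Lemma \ref{lemma: flatness} becomes the statement that
\[
\pp(\vv)-\pp(\uu^{(0)})\ \ge\ \sum_{j\ge 1}\bigl(\pp(\uu^{(j)})-\dd\cdot\uu^{(j)}\bigr)
\]
for every decomposition $\vv=\uu^{(0)}+\uu^{(1)}+\cdots+\uu^{(s)}$ with $\uu^{(0)}\ge0$ and $\uu^{(j)}>0$ for $j\ge1$. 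The second computation is the telescoping identity on the cycle: for a connected proper arc $I=\{a,a+1,\dots,b\}$ one has $\ee_I\cdot C_Q\vv=\vv_a+\vv_b-\vv_{a-1}-\vv_{b+1}$ and $\ee_I\cdot C_Q\ee_I=2$, so $\ee_I$ is a real positive root; for $I=Q_0$ one has $\ee_I=\delta$ and $\ee_I\cdot C_Q\vv=0$.

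For necessity I would test the displayed inequality on the single decomposition $\vv=(\vv-\ee_I)+\ee_I$, valid whenever $\vv\ge\ee_I$. Using $\pp(\vv)-\pp(\vv-\ee_I)=1-\ee_I\cdot C_Q\vv$ and $\pp(\ee_I)=0$, this collapses to exactly $\ee_I\cdot(\dd-C_Q\vv)\ge-1$, i.e. \eqref{eqn: flatness type A}. To cover arcs $I$ not contained in $\Supp(\vv)$, where the decomposition is unavailable, I would split $I$ at the vanishing vertices of $\vv$ and check, using $\vv\ge0$ and $\dd\ge0$, that the inequality for $I$ is implied by those for the maximal zero-free sub-arcs; this is a short bookkeeping argument. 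The case $I=Q_0$ gives $\sum_i\dd_i\ge-1$, which is automatic.

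For sufficiency I would prove the displayed inequality for an arbitrary decomposition. Set $\ww=\vv-\uu^{(0)}=\sum_j\uu^{(j)}$. The key device is the super-level-set (``histogram'') decomposition of $\ww$ on the cycle: $\ww=\sum_{h\ge1}\ee_{L_h}$ with $L_h=\{i:\ww_i\ge h\}$, where each nonfull $L_h$ is a disjoint union of proper arcs and each full $L_h$ (there are $m=\min_i\ww_i$ of them) equals $\delta$. Bounding $\dd\cdot\ww=\sum_{h,A}\ee_A\cdot\dd$ below by applying \eqref{eqn: flatness type A} to each proper arc $A$ and the bound $\sum_i\dd_i\ge1$ (which uses only $\dd\ne0$) to each full arc, and then rewriting $\ww\cdot C_Q\vv$ and $q(\vv)-q(\uu^{(0)})$ (with $q:=1-\pp$) in terms of $q(\ww)$, all the $\dd$- and $\uu^{(0)}$-dependent terms cancel and the whole inequality reduces to the purely combinatorial
\[
\bigl(q(\ww)-D\bigr)+m+\sum_{j\ge1}q(\uu^{(j)})\ \ge\ s,\qquad D=\tfrac12\textstyle\sum_i|\ww_i-\ww_{i+1}|.
\]
Here I would use the clean cyclic formula $q(\uu)=\tfrac12\sum_i(\uu_i-\uu_{i+1})^2$, which gives $q(\ww)-D=\sum_i\binom{|\ww_i-\ww_{i+1}|}{2}\ge0$, the integrality fact that $q(\uu^{(j)})\ge1$ unless $\uu^{(j)}$ is a multiple of $\delta$, and finally that the $\delta$-multiple summands sum to at most $m\,\delta$ so their number is at most $m$. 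Combining these three observations closes the inequality.

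I expect the main obstacle to be getting the arc bookkeeping in the sufficiency step exactly right: the full-cycle super-level sets must be handled through $\dd\ne0$ rather than through the weak $I=Q_0$ case of \eqref{eqn: flatness type A}, and conflating the two leads to a false intermediate inequality (already the single summand $\ww=\delta$ is a litmus test). Once the arc count is split correctly into proper arcs and $\delta$-copies, the cancellation is automatic and the residual inequality is elementary. The necessity reduction for arcs meeting the zero locus of $\vv$ is a secondary technical point, handled by the arc-splitting argument above.
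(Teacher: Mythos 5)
Your proof is correct and follows essentially the same route as the paper: both reduce to Crawley-Boevey's numerical criterion for the associated unframed quiver, obtain necessity by testing the decomposition $\vv=(\vv-\ee_I)+\ee_I$, and obtain sufficiency from the super-level-set decomposition of $\ww$ into connected arcs (the paper's Lemma \ref{lem: decomposition of test vector}) combined with $\dd\cdot\delta\ge 1$ for the full levels. The only differences are organizational: the paper first collapses $\sum_j \pp(\uu^{(j)})$ to $\min_i \ww_i$ by restricting to decompositions into roots of the cycle, whereas you keep the general decomposition and use integrality of the Tits form; and your ``arc bookkeeping'' for arcs meeting the zero locus of $\vv$ (which does require noting that a vanishing vertex adjacent to a support component contributes at least $1$, not merely $0$) is treated equally tersely in the paper.
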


\begin{proof}
By Lemma \ref{lemma: flatness}, the flatness of $\mu$ is equivalent to that for every decomposition $\vv^{\dd}=\vv'+\vv^{(1)}+\cdots+\vv^{(r)}$ with $\vv^{(1)},\cdots,\vv^{(r)}$ being roots of $Q$, the inequality $\pp(\vv^{\dd})\ge \pp(\vv')+\pp(\vv^{(1)})+\cdots +\pp(\vv^{(r)})$ holds.

Let $\uu=\vv^{\dd}-\vv'$, then it is easy to see that $ \pp(\vv^{(1)})+\cdots +\pp(\vv^{(r)})\le \min_{i\in Q_0} \uu_i$, and the equality can be achieved. On the other hand $\pp(\vv^{\dd})- \pp(\vv')=\uu\cdot (\dd-C_Q\vv)+\frac{1}{2}(\uu,\uu)_Q$, so the flatness of moment map is equivalent to that $\forall \uu\in \mathbb N^{Q_0} $ such that $ 0\le \uu\le \vv$, the inequality
\begin{align}\label{eqn: inequality type A}
    \uu\cdot (\dd-C_Q\vv)+\frac{1}{2}(\uu,\uu)_Q\ge \min_{i\in Q_0} \uu_i
\end{align}
holds. Let us discuss two situations separately: (1) $\Supp(\vv)\neq Q$; (2) $\Supp(\vv)= Q$.

In the case (1), the right hand side of \eqref{eqn: inequality type A} is zero and $Q$ can be regarded as a type A quiver by removing one node with zero dimension vector. Moreover \eqref{eqn: inequality type A} holds if and only if it holds on each connected component of $\Supp(\vv)$, so let us assume that $Q'=\Supp(\vv)$ is connected. Let $I$ be a subset of $Q'_0$ such that nodes in $I$ are connected by arrows in $Q'$, and set $\uu=\ee_I$, then \eqref{eqn: inequality type A} implies that $\ee_I\cdot(\dd-C_Q\vv)\ge -1$. On the other hand if $\ee_I\cdot(\dd-C_Q\vv)\ge -1$ holds for all $I\subset Q'_0$ such that nodes in $I$ are connected by arrows in $Q'$, then Lemma \ref{lem: decomposition of test vector} implies that $\forall \uu\in \mathbb N^{Q'_0} $ such that $ 0\le \uu\le \vv$ we have a decomposition $\uu=\sum_{\alpha=1}^n\ee_{I_{\alpha}}$, such that $(\ee_{I_{\alpha}},\ee_{I_\beta})_Q\ge 0$ for any pair $1\le \alpha,\beta\le n$, therefore
\begin{align*}
    \uu\cdot (\dd-C_Q\vv)+\frac{1}{2}(\uu,\uu)_Q\ge \sum_{\alpha=1}^n \left(\ee_{I_{\alpha}}\cdot (\dd-C_Q\vv)+\frac{1}{2}(\ee_{I_{\alpha}},\ee_{I_{\alpha}})_Q\right)\ge 0
\end{align*}
This shows that \eqref{eqn: inequality type A} is equivalent to \eqref{eqn: flatness type A} being true for all $I\subset Q_0$ such that $I$ is a contained in a connected component of $\Supp(\vv)$. Observe that if $\vv_i=0$ then $\ee_i\cdot(\dd-C_Q\vv)\ge 0$, it follows that if \eqref{eqn: flatness type A} is true for all $I\subset Q_0$ such that $I$ is a contained in a connected component of $\Supp(\vv)$, then it is true for all $I$. This completes the proof in the case (1).

In the case (2), by taking $\uu=\ee_I$ we see that \eqref{eqn: inequality type A} implies that \eqref{eqn: flatness type A} holds for all $I$. Note that the case $I=Q_0$ is automatically true, since we assume that $\dd\neq 0$. Conversely, assume that \eqref{eqn: flatness type A} holds for all $I$, then let us write $\uu=m\delta+\sum_{\alpha=1}^n\ee_{I_{\alpha}}$ ($\delta$ is the imaginary root of $Q$) such that $I_{\alpha}\neq Q_0$ for all $1\le \alpha\le n$, and $(\ee_{I_{\alpha}},\ee_{I_\beta})_Q\ge 0$ for any pair $1\le \alpha,\beta\le n$, the existence of such decomposition follows from Lemma \ref{lem: decomposition of test vector}. Then we have
\begin{align*}
     \uu\cdot (\dd-C_Q\vv)+\frac{1}{2}(\uu,\uu)_Q\ge m\delta\cdot\dd+\sum_{\alpha=1}^n \left(\ee_{I_{\alpha}}\cdot (\dd-C_Q\vv)+\frac{1}{2}(\ee_{I_{\alpha}},\ee_{I_{\alpha}})_Q\right)\ge m=\min_{i\in Q_0} \uu_i.
\end{align*}
This completes the proof in the case (2).
\end{proof}

\begin{lemma}\label{lem: decomposition of test vector}
If $Q$ is an affine type A Dynkin quiver, and $\uu\in \mathbb N^{Q_0}$, then there exists subsets $I_1,\cdots,I_n$ of $Q_0$ such that nodes in $I_{\alpha}$ are connected by arrows in $Q$ for all $\alpha=1,\cdots,n$, and $\uu=\sum_{\alpha=1}^n\ee_{I_{\alpha}}$, and $(\ee_{I_{\alpha}},\ee_{I_\beta})_Q\ge 0$ for any pair $1\le \alpha,\beta\le n$.
\end{lemma}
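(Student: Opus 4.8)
The plan is to decompose $\uu$ by ``horizontal slicing'' of its bar chart over the cycle $Q$, reading off the subsets $I_\alpha$ as the connected components of the level sets of $\uu$. For each integer $\ell\ge 1$ I would set $S_\ell=\{i\in Q_0:\uu_i\ge\ell\}$; since $\uu_i=\#\{\ell:\uu_i\ge\ell\}$ we have $\uu=\sum_{\ell\ge 1}\ee_{S_\ell}$. Each $S_\ell$ is the disjoint union of its connected components, and every such component is connected by arrows in $Q$ (a linear arc when $S_\ell\ne Q_0$, and the whole cycle $Q_0$ when $S_\ell=Q_0$). Taking the $I_\alpha$ to be all of these components, ranging over all levels $\ell$, yields $\uu=\sum_\alpha\ee_{I_\alpha}$ with each $I_\alpha$ connected, so it only remains to verify $(\ee_{I_\alpha},\ee_{I_\beta})_Q\ge 0$ for every pair.

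The structural input I would use is that in affine type A every vertex of $Q$ has degree exactly two; this holds uniformly, a loop in the $\tilde A_0$ case and a double edge in the $\tilde A_1$ case each counting for degree two. From this I would record the two pairings I need. For connected subsets $I,J$ that are disjoint and non-adjacent, $(\ee_I,\ee_J)_Q=-\#\{\text{edges between }I\text{ and }J\}=0$. For nested connected subsets $J\subseteq I$, bilinearity gives
\begin{align*}
    (\ee_I,\ee_J)_Q=\sum_{j\in J}\bigl(2-d_I(j)\bigr),
\end{align*}
where $d_I(j)$ is the number of edges from $j$ into $I$ (a loop counted twice); since $d_I(j)$ is bounded by the total degree $2$ of $j$, every summand is non-negative, so $(\ee_I,\ee_J)_Q\ge 0$.

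It then suffices to check that any pair $(I_\alpha,I_\beta)$ produced by the slicing is either nested or disjoint-and-non-adjacent. If they come from the same level they are distinct components of one $S_\ell$, hence disjoint; they cannot be adjacent, for an edge joining them would place both endpoints in $S_\ell$ and merge the two components. If they come from levels $\ell_\alpha<\ell_\beta$, then $S_{\ell_\beta}\subseteq S_{\ell_\alpha}$, so the connected set $I_\beta$ lies in a unique component $C$ of $S_{\ell_\alpha}$: if $C=I_\alpha$ the pair is nested, while if $C\ne I_\alpha$ then $I_\beta\subseteq C$ is disjoint from and non-adjacent to $I_\alpha$ (any edge between them would be an edge between the distinct components $C$ and $I_\alpha$ of $S_{\ell_\alpha}$). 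Either way the relevant pairing above is $\ge 0$, which finishes the argument. The only step with genuine content is the nested computation together with the degree-two bound forcing $2-d_I(j)\ge 0$; the combinatorial claim that no two slices cross or become adjacent is routine once it is phrased through connected components.
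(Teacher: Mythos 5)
Your proof is correct and is essentially the paper's argument in unrolled form: the paper peels off $\ee_{\Supp(\uu)}$ and inducts on $\sum_i\uu_i$, which produces exactly your level sets $S_\ell$, and the key inequality in both cases is $(\ee_J,\ee_I)_Q\ge 0$ for connected $J\subseteq I$. Your degree-two computation $(\ee_I,\ee_J)_Q=\sum_{j\in J}(2-d_I(j))\ge 0$ supplies the justification that the paper only asserts, so nothing further is needed.
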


\begin{proof}
Without loss of generality, let us assume that $\Supp(\uu)$ is connected, otherwise we can restrict to connected components. Let us take $I_1$ to be the set of nodes in $\Supp(\uu)$, note that for any $J\subset I_1$, we have $(\ee_J,\ee_{I_1})_Q\ge 0$. By induction on $\sum_{i\in Q_0}\uu_i$, we have $I_2,\cdots,I_n$ such that $\uu-\ee_{I_1}=\sum_{\alpha=2}^n\ee_{I_{\alpha}}$ and $(\ee_{I_{\alpha}},\ee_{I_\beta})_Q\ge 0$ for any pair $2\le \alpha,\beta\le n$. Now $I_{\alpha}\subset I_1$ for all $\alpha$, thus it suffices to take $I_1,\cdots,I_n$.
\end{proof}

\subsection{Quantization of quiver schemes}\label{subsec: Quantization of quiver schemes}

Let $\mathcal A_{\hbar}$ be a flat $\mathbb C[\hbar]$ algebra such that $A:=\mathcal A_{\hbar}/(\hbar)$ is commutative. Suppose that $\mathbb C^{\times}$ acts on $\mathcal A_{\hbar}$ by automorphisms such that $\hbar$ has weight $2$. Let $\mathfrak{g}$ be a Lie algebra with an action on $\mathcal A_{\hbar}$, i.e. a Lie homomorphism $\mathfrak{g}\to \mathrm{Der}(\mathcal A_{\hbar})$. Suppose that $\Phi_{\hbar}:\mathfrak{g}\to \mathcal A_{\hbar}$ is a $\mathbb C[\hbar]$-linear map such that image of $\Phi_{\hbar}$ is in the weight $2$ subspace of $\mathcal A_{\hbar}$. We furthermore assume that $\Phi_{\hbar}$ is a Lie algebra homomorphism, where the Lie bracket $[-,-]_{\hbar}$ on $\mathcal A_{\hbar}$ is $[a,b]_{\hbar}=\frac{1}{\hbar}[a,b]$.

\begin{definition}
A Lie algebra homomorphism $\Phi_{\hbar}:\mathfrak{g}\to \mathcal{A}_{\hbar}$ as above is called a \textit{quantum moment map} if $\forall a\in \mathfrak{g},b\in \mathcal A_{\hbar}$,
\begin{align}
    [\Phi_{\hbar}(a),b]_{\hbar}=a\cdot b.
\end{align}
Here $a\cdot b$ means the action of $a$ on $b$.
\end{definition}
Let $\chi\in (\mathfrak{g}/[\mathfrak{g},\mathfrak{g}])^*$ and form the shift $\mathfrak{g}_{\hbar \chi}=\{a-\hbar\langle \chi,a\rangle\}\subset \mathfrak{g}\oplus \mathbb C\hbar$, then it is easy to see that $(\mathcal A_{\hbar}\Phi_{\hbar}(\mathfrak{g}_{\hbar \chi}))^{\mathfrak{g}}$ is a two-sided ideal of $\mathcal A_{\hbar}^{\mathfrak{g}}$. More generally, let $U$ be a linear space and $\varphi:U\to (\mathfrak{g}/[\mathfrak{g},\mathfrak{g}])^*$ be a linear map, and form the shift $\mathfrak{g}_U=\{a-\varphi^*(a)\}\subset \mathfrak{g}\oplus U^*$, then $(\mathcal A_{\hbar}[U^*]\Phi_{\hbar}(\mathfrak{g}_{U}))^{\mathfrak{g}}$ is a two-sided ideal of $\mathcal A_{\hbar}^{\mathfrak{g}}[U^*]$.

\begin{definition}
Define the quantum Hamiltonian reductions $\mathcal A_{\hbar}\sslash_{\hbar\chi} \mathfrak{g}=\mathcal A_{\hbar}^{\mathfrak{g}}/(\mathcal A_{\hbar}\Phi_{\hbar}(\mathfrak{g}_{\hbar \chi}))^{\mathfrak{g}}$ and $\mathcal A_{\hbar}\sslash_{U} \mathfrak{g}=\mathcal A_{\hbar}^{\mathfrak{g}}[U^*]/(\mathcal A_{\hbar}[U^*]\Phi_{\hbar}(\mathfrak{g}_{U}))^{\mathfrak{g}}$.
\end{definition}

Note that we also have classical Hamiltonian reduction $A\sslash_{U} \mathfrak{g}=A^{\mathfrak{g}}[U^*]/(A[U^*]\Phi(\mathfrak{g}_{U}))^{\mathfrak{g}}$, and it is easy to see that there is a natural algebra homomorphism $(\mathcal A_{\hbar}\sslash_{U} \mathfrak{g})/(\hbar)\to A\sslash_{U} \mathfrak{g}$.

\begin{lemma}\label{lem: quantization commutes with reduction}
Suppose that $\mathfrak{g}$ is reductive, and $\{e_1,\cdots,e_n\}$ is a basis of $\ker(\varphi^*)\subset\mathfrak{g}$ such that $\{\Phi(e_i)\}_{i=1}^n$ is a regular sequence in $A=\mathcal{A}_{\hbar}/(\hbar)$, then there is an isomorphism
\begin{align}
    (\mathcal A_{\hbar}\sslash_{U} \mathfrak{g})/(\hbar)\cong A\sslash_{U} \mathfrak{g}.
\end{align}
\end{lemma}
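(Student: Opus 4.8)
The plan is to decouple the two operations that define the quantum Hamiltonian reduction---quotienting by the moment-map ideal and passing to $\mathfrak{g}$-invariants---and to show that each is compatible with reduction modulo $\hbar$. Write $B_{\hbar}=\mathcal{A}_{\hbar}[U^*]$, which is flat over $\mathbb{C}[\hbar]$, let $J_{\hbar}=B_{\hbar}\Phi_{\hbar}(\mathfrak{g}_U)$ be the left ideal generated by the shifted moment-map elements, and set $M_{\hbar}=B_{\hbar}/J_{\hbar}$; denote by $B$, $J$, $M$ the classical counterparts obtained by setting $\hbar=0$. Since the images of the generators of $J_{\hbar}$ generate $J$, one checks immediately that $M_{\hbar}/(\hbar)\cong M$; moreover, reductivity of $\mathfrak{g}$ furnishes a Reynolds operator, so that $\mathcal{A}_{\hbar}\sslash_U\mathfrak{g}=B_{\hbar}^{\mathfrak{g}}/J_{\hbar}^{\mathfrak{g}}=M_{\hbar}^{\mathfrak{g}}$ and likewise $A\sslash_U\mathfrak{g}=M^{\mathfrak{g}}$. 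The whole statement thus reduces to the single isomorphism $M_{\hbar}^{\mathfrak{g}}/(\hbar)\cong M^{\mathfrak{g}}$.

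The central step, and the one where the regular-sequence hypothesis is used, is to prove that $M_{\hbar}$ is flat (equivalently $\hbar$-torsion-free) over $\mathbb{C}[\hbar]$. Extend $\{e_1,\dots,e_n\}$ to a basis $\{e_i\}\cup\{f_j\}$ of $\mathfrak{g}$ for which $\{\varphi^*(f_j)\}$ is a basis of $\mathrm{im}(\varphi^*)\subset U^*$; then $J_{\hbar}$ is generated by $g_i=\Phi_{\hbar}(e_i)$ and $h_j=\Phi_{\hbar}(f_j)-\varphi^*(f_j)$. I would first observe that the classical symbols $\bar h_j=\Phi(f_j)-\varphi^*(f_j)$ and $\bar g_i=\Phi(e_i)$ form a regular sequence in the commutative ring $B=A[U^*]$: each $\bar h_j$ is monic and linear in the independent coordinate $\varphi^*(f_j)$, so quotienting by the $\bar h_j$ eliminates these coordinates and identifies $B/(\bar h_j)$ with a polynomial ring $A[U_0^*]$, and the remaining $\bar g_i$ then form a regular sequence there because $\{\Phi(e_i)\}$ is regular in $A$ by hypothesis and regularity is preserved under the polynomial extension $A\subset A[U_0^*]$. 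I would then lift this one generator at a time, using the standard fact that if $R$ is $\mathbb{C}[\hbar]$-flat (and $\hbar$-adically separated, which is guaranteed here by the $\mathbb{C}^{\times}$-grading) and $x\in R$ reduces to a non-zero-divisor in $R/(\hbar)$, then $x$ is a non-zero-divisor in $R$, the quotient $R/Rx$ is again $\mathbb{C}[\hbar]$-flat, and $(R/Rx)/(\hbar)\cong(R/(\hbar))/(\bar x)$. Applying this successively to $h_1,\dots,h_m,g_1,\dots,g_n$ yields the flatness of $M_{\hbar}$.

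With flatness in hand there is a short exact sequence $0\to M_{\hbar}\xrightarrow{\hbar}M_{\hbar}\to M\to 0$ of $\mathfrak{g}$-modules. Passing to invariants is left exact, so the kernel of the reduction map $M_{\hbar}^{\mathfrak{g}}\to M^{\mathfrak{g}}$ is $M_{\hbar}^{\mathfrak{g}}\cap \hbar M_{\hbar}$, and this equals $\hbar M_{\hbar}^{\mathfrak{g}}$ because $\hbar$ is central and $M_{\hbar}$ is $\hbar$-torsion-free; hence $M_{\hbar}^{\mathfrak{g}}/(\hbar)$ injects into $M^{\mathfrak{g}}$. Surjectivity is where reductivity re-enters: given an invariant class in $M^{\mathfrak{g}}$, I would lift it arbitrarily to $M_{\hbar}$ and apply the Reynolds operator to produce an invariant lift, which still reduces to the given class modulo $\hbar$ since the Reynolds operator fixes already-invariant elements. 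This gives the desired isomorphism $M_{\hbar}^{\mathfrak{g}}/(\hbar)\cong M^{\mathfrak{g}}$.

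The main obstacle I anticipate is the flatness of $M_{\hbar}$ in the second step: the ideal $J_{\hbar}$ is only a left ideal and its generators are not central, so one cannot literally invoke a commutative Koszul resolution. The point that makes the argument go through is that the classical symbols live in the genuinely commutative ring $B$, where they form an honest regular sequence, so the deformation can be controlled one generator at a time via the $\hbar$-torsion-free lifting lemma; verifying $\hbar$-adic separatedness through the weight grading, and that left multiplication by each generator remains injective, are the places that will require the most care.
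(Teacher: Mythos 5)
The paper itself proves this lemma only by citation to Losev's Lemma 3.3.1, so your attempt to give a self-contained argument is welcome; the overall architecture (separate the quotient by the moment-map ideal from the passage to invariants, prove $\hbar$-torsion-freeness of $M_{\hbar}=B_{\hbar}/J_{\hbar}$, then use exactness of invariants for reductive $\mathfrak{g}$) is the right one, and the first and third steps are fine. But the central step has a genuine gap: the ``one generator at a time'' induction does not work. After the first quotient, $B_{\hbar}/B_{\hbar}y_1$ is only a left module, so right multiplication by $y_2$ does not descend and your lifting lemma for a ring element $x\in R$ no longer applies as stated; more substantively, the intermediate quotients $B_{\hbar}/(B_{\hbar}y_1+\cdots+B_{\hbar}y_k)$ really can have $\hbar$-torsion. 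The obstruction is the commutators: for $a,b\in\mathfrak{g}_U$ one has $[\Phi_{\hbar}(a),\Phi_{\hbar}(b)]=\hbar\,\Phi_{\hbar}([a,b])$ with $[a,b]\in[\mathfrak{g},\mathfrak{g}]\subset\ker(\varphi^*)$, so $\hbar\,\Phi_{\hbar}([a,b])$ lies in the partial left ideal generated by $\Phi_{\hbar}(a),\Phi_{\hbar}(b)$ while $\Phi_{\hbar}([a,b])$ itself generally does not --- its classical limit $\Phi([a,b])$ need not lie in the ideal $(\Phi(a),\Phi(b))\subset B$ even when the full sequence is regular. (For $\mathfrak{g}=\mathfrak{sl}_2$ with generators $\Phi_{\hbar}(e),\Phi_{\hbar}(f),\Phi_{\hbar}(h)$, the element $\Phi_{\hbar}(h)$ is an $\hbar$-torsion class in $B_{\hbar}/(B_{\hbar}\Phi_{\hbar}(e)+B_{\hbar}\Phi_{\hbar}(f))$ whenever $\Phi(h)\notin(\Phi(e),\Phi(f))$.) No ordering of the generators avoids this, since the commutators of the $g_i$ among themselves already involve all of the $g_i$.

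The repair is to treat all generators simultaneously and use the full strength of regularity, namely that the first syzygies of a regular sequence in the commutative ring $B$ are generated by the Koszul syzygies. Concretely: if $\hbar m=\sum_k b_k y_k$, reduce mod $\hbar$ to get $\sum_k\bar b_k\bar y_k=0$, write $\bar b_k=\sum_l c_{kl}\bar y_l$ with $c_{kl}=-c_{lk}$, lift the $c_{kl}$ antisymmetrically to $\tilde c_{kl}\in B_{\hbar}$, and observe that
\begin{align*}
\sum_{k,l}\tilde c_{kl}\,y_l y_k=\tfrac12\sum_{k,l}\tilde c_{kl}\,[y_l,y_k]=\tfrac{\hbar}{2}\sum_{k,l}\tilde c_{kl}\,\Phi_{\hbar}([a_l,a_k])\in\hbar J_{\hbar},
\end{align*}
because the commutator lands back in $\Phi_{\hbar}(\mathfrak{g}_U)$ (here it is essential that $[\mathfrak{g},\mathfrak{g}]\subset\ker(\varphi^*)$, so all the generators are available to absorb the error, not just the two being commuted). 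Writing $b_k=\sum_l\tilde c_{kl}y_l+\hbar b_k'$ then gives $\hbar m\in\hbar J_{\hbar}$, and $\hbar$-torsion-freeness of $B_{\hbar}$ yields $m\in J_{\hbar}$. With this replacement of your second step, the rest of your argument goes through and gives a correct, self-contained proof; your verification that $\bar h_1,\dots,\bar h_m,\bar g_1,\dots,\bar g_n$ is a regular sequence in $B=A[U^*]$ is exactly what is needed as input.
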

\begin{proof}
See \cite[Lemma 3.3.1]{losev2012isomorphisms}.
\end{proof}

Apply the above construction to the quiver representation of $(Q,\mathbf{v})$, and we set $\mathcal A_{\hbar}=W_{\hbar}(T^*R(Q,\vv))$ (Weyl algebra of the symplectic vector space $T^*R(Q,\vv)$) with $\mathbb C^{\times}$ acting on the vector space $T^*R(Q,\vv)$ of weight $-1$, $G=G(\vv)$, , and $\Phi_{\hbar}:\mathfrak{g}(\vv)\to W_{\hbar}(T^*R(Q,\vv))$ is the (unique) degree $2$ lift of the comoment map $\mu^*: \mathfrak{g}(\vv)\to \mathbb C[T^*R(Q,\vv)]$. For a linear map $U\to Z=(\mathfrak{g}(\vv)/[\mathfrak{g}(\vv),\mathfrak{g}(\vv)])^*$, call the quantum Hamiltonian reduction $W_{\hbar}(T^*R(Q,\vv))\sslash_U \mathfrak{g}(\vv)$ the \textit{quantized quiver scheme}, denoted by $\mathcal A_{\hbar,U}(Q,\vv)$. Since $\mathfrak{g}(\vv)$ is reductive, the natural homomorphism $\mathcal A_{\hbar,U}(Q,\vv)/(\hbar)\to \mathbb C[\mathcal{M}_U(Q,\vv)]$ is surjective, where $\mathcal{M}_U(Q,\vv)=\mathcal{M}_Z(Q,\vv)\times_Z U$. A special case of Lemma \ref{lem: quantization commutes with reduction} reads:
\begin{lemma}\label{lem: quantization commutes with reduction_quiver case}
If the moment map $\mu: T^*R(Q,\vv)\to \mathfrak{g}(\vv)^*$ is flat, then the epimorphism $\mathcal A_{\hbar,U}(Q,\vv)/(\hbar)\twoheadrightarrow \mathbb C[\mathcal{M}_U(Q,\vv)]$ is an isomorphism, and $\mathcal A_{\hbar,U}(Q,\vv)$ is flat over $\mathbb C[U][\hbar]$.
\end{lemma}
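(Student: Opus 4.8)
The plan is to deduce both assertions from the single regular-sequence input supplied by the flatness of $\mu$, using Lemma \ref{lem: quantization commutes with reduction} for the mod-$\hbar$ isomorphism and a local-criterion argument for flatness over $\mathbb{C}[U][\hbar]$. Throughout I write $A=\mathbb{C}[T^*R(Q,\vv)]$, $N=\dim\mathfrak{g}(\vv)$, and $\tilde{\mathcal{A}}=W_{\hbar}(T^*R(Q,\vv))\otimes\mathbb{C}[U]$, so that $\mathcal{A}_{\hbar,U}(Q,\vv)=(\tilde{\mathcal{A}}/\mathcal{I})^{\mathfrak{g}(\vv)}$, where $\mathcal{I}$ is the left ideal generated by the shifted quantum moment map elements $h_a=\Phi_{\hbar}(a)-\varphi^*(a)$, $a\in\mathfrak{g}(\vv)$; the identification $\mathcal{A}_{\hbar,U}=(\tilde{\mathcal{A}}/\mathcal{I})^{\mathfrak{g}(\vv)}$ uses exactness of invariants for the reductive $\mathfrak{g}(\vv)$.

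First I would verify the hypothesis of Lemma \ref{lem: quantization commutes with reduction}. Since $T^*R(Q,\vv)$ is smooth, $A$ is Cohen--Macaulay and the components $\mu^*(\xi_1),\dots,\mu^*(\xi_N)$ of the moment map (for any basis $\{\xi_j\}$ of $\mathfrak{g}(\vv)$) are homogeneous of degree two; flatness of $\mu$ means exactly that $\mu^{-1}(0)$ has the expected codimension $N$, which for homogeneous functions on a Cohen--Macaulay graded ring says they form a regular sequence. As the regular-sequence property of homogeneous elements of a fixed positive degree depends only on the subspace they span, I may take this basis to extend a chosen basis $\{e_1,\dots,e_n\}$ of $\ker(\varphi^*)$; since any subset of a homogeneous regular sequence in a graded Cohen--Macaulay ring is again a regular sequence (permute it to an initial segment), $\Phi(e_1)=\mu^*(e_1),\dots,\mu^*(e_n)$ is a regular sequence in $A$. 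Lemma \ref{lem: quantization commutes with reduction} then yields $\mathcal{A}_{\hbar,U}(Q,\vv)/(\hbar)\cong A\sslash_U\mathfrak{g}(\vv)$, and reductivity identifies $A\sslash_U\mathfrak{g}(\vv)$ with $\mathbb{C}[\mathcal{M}_U(Q,\vv)]$, so the stated epimorphism is an isomorphism.

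For flatness over $\mathbb{C}[U][\hbar]$ I would invoke the local criterion of flatness along the divisor $\{\hbar=0\}$: since $\hbar$ is a nonzerodivisor in $\mathbb{C}[U][\hbar]$, it suffices to show (a) $\mathcal{A}_{\hbar,U}(Q,\vv)$ is $\hbar$-torsion-free and (b) its reduction $\mathbb{C}[\mathcal{M}_U(Q,\vv)]$ is flat over $\mathbb{C}[U]$. Part (b) follows from flatness of $\mu$: base change shows $\mu^{-1}(Z)\to Z$ is flat, reductivity makes $\mathbb{C}[\mu^{-1}(Z)]^{G(\vv)}$ a direct $\mathbb{C}[Z]$-module summand and hence flat, so $\mathcal{M}_Z(Q,\vv)\to Z$ is flat and its base change $\mathcal{M}_U(Q,\vv)=\mathcal{M}_Z(Q,\vv)\times_Z U$ is flat over $\mathbb{C}[U]$. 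For part (a), reductivity lets me pass to the non-invariant module $\tilde{\mathcal{A}}/\mathcal{I}$, of which $\mathcal{A}_{\hbar,U}(Q,\vv)$ is a $\mathbb{C}[U][\hbar]$-module direct summand, so it is enough to prove $\tilde{\mathcal{A}}/\mathcal{I}$ is $\hbar$-torsion-free.

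The key input for (a) is that the classical shadows $\bar h_a$ of the shifted generators form a regular sequence in $A[U^*]=\mathbb{C}[T^*R(Q,\vv)\times U]$: these are the components of the \emph{sheared} map $\psi(x,u)=\mu(x)-\varphi(u)$, which is flat because it is the flat map $\mu\times\operatorname{id}_U$ followed by the shearing automorphism of $\mathfrak{g}(\vv)^*\times U$, so $\psi^{-1}(0)$ is a complete intersection of pure codimension $N$ in the smooth space $T^*R(Q,\vv)\times U$. I then intend to lift this to the quantum level using the quasi-commutativity relation $[h_a,h_b]=\hbar\,h_{[a,b]}\in\hbar\mathcal{I}$, valid because $\varphi^*$ kills $[\mathfrak{g}(\vv),\mathfrak{g}(\vv)]$: it exhibits $\tilde{\mathcal{A}}/\mathcal{I}$ as the zeroth homology of a Chevalley--Eilenberg/Koszul-type complex $\tilde{\mathcal{A}}\otimes\wedge^{\bullet}\mathfrak{g}(\vv)$ whose reduction mod $\hbar$ resolves $A[U^*]/(\bar h)$; acyclicity of the classical complex (which holds precisely because $\bar h$ is a regular sequence) together with the $\hbar$-flatness of $\tilde{\mathcal{A}}$ forces the quantum complex to be acyclic in positive degrees, whence $\tilde{\mathcal{A}}/\mathcal{I}$ is $\hbar$-flat. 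I expect this final step---promoting the classical complete-intersection resolution to an $\hbar$-flat quantum resolution through a noncommutative Koszul complex---to be the main obstacle; it is exactly the mechanism underlying the proof of Lemma \ref{lem: quantization commutes with reduction}, and the whole flatness claim may alternatively be packaged as the $\mathbb{C}[U][\hbar]$-flat refinement of that lemma.
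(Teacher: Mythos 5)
Your proof is correct and follows essentially the same route as the paper: the mod-$\hbar$ isomorphism is obtained exactly as in the paper by feeding the regular sequence supplied by flatness of $\mu$ into Lemma \ref{lem: quantization commutes with reduction}, and your split of the $\mathbb C[U][\hbar]$-flatness into $\hbar$-torsion-freeness plus flatness of $\mathbb C[\mathcal{M}_U(Q,\vv)]$ over $\mathbb C[U]$ is just an unpacking of the paper's one-line observation that $\mathrm{gr}_{\hbar}\,\mathcal A_{\hbar,U}(Q,\vv)\cong\mathbb C[\mathcal{M}_U(Q,\vv)][\hbar]$. The noncommutative Koszul/BRST argument you sketch for torsion-freeness is precisely the mechanism behind the cited \cite[Lemma 3.3.1]{losev2012isomorphisms}, which the paper invokes rather than reproves.
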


\begin{proof}
If $\mu$ is flat then any basis $\{e_1,\cdots,e_n\}$ of $\ker (\mathfrak{g}(\vv)\to U^*)$ is mapped to a regular sequence in $\mathbb C[T^*R(Q,\vv)]$, so by \ref{lem: quantization commutes with reduction} $\mathcal A_{\hbar,U}(Q,\vv)/(\hbar)\to \mathbb C[\mathcal{M}_U(Q,\vv)]$ is an isomorphism. Moreover the associated graded of $\mathcal A_{\hbar,U}(Q,\vv)$ with respect to the $\hbar$-filtration is $ \mathbb C[\mathcal{M}_U(Q,\vv)][\hbar]$, which is flat over $\mathbb C[U][\hbar]$, therefore $\mathcal A_{\hbar,U}(Q,\vv)$ is flat over $\mathbb C[U][\hbar]$.
\end{proof}

We can sheafify the construction of quantum Hamiltonian reduction, at the cost of taking $\hbar$-completion. We will not repeat the definition here, see \cite[3.4]{losev2012isomorphisms} for detail. A particular case that we will focus on is the following: $G(\vv)$ acts on $\widehat{W}_{\hbar}(T^*R(Q,\vv))$ with quantum moment map $\Phi_{\hbar}$, here $\widehat{W}_{\hbar}(T^*R(Q,\vv))$ is the completion of the Weyl algebra of symplectic vector space $T^*R(Q,\vv)$ in the $\hbar$-adic topology, and it can be sheafified on $T^*R(Q,\vv)$, assume that $\vv$ is \textit{indivisible} and $\theta\in \Theta$ is a \textit{generic} stability condition, then for every linear map $U\to Z=(\mathfrak{g}(\vv)/[\mathfrak{g}(\vv),\mathfrak{g}(\vv)])^*$ we have a sheaf of associative flat $\mathbb C[\![\hbar]\!]$-algebras $\widehat{W}_{\hbar}(T^*R(Q,\vv))\sslash^{\theta}_U G(\vv)$ on the smooth scheme $\mathcal M^{\theta}_U(Q,\vv)$ (see Remark \ref{rmk: smoothness}), denoted by $\mathcal O_{\hbar,\mathcal M^{\theta}_U(Q,\vv)}$. Note that $\mathcal O_{\hbar,\mathcal M^{\theta}_U(Q,\vv)}/(\hbar)\cong \mathcal O_{\mathcal M^{\theta}_U(Q,\vv)}$, i.e. $\mathcal O_{\hbar,\mathcal M^{\theta}_U(Q,\vv)}$ is a quantization of $\mathcal O_{\mathcal M^{\theta}_U(Q,\vv)}$.

\begin{proposition}\label{prop: isomorphism different stability condition}
Suppose that the quiver $(Q,\vv)$ is such that $\vv$ is indivisible and the moment map $\mu: T^*R(Q,\vv)\to \mathfrak{g}(\vv)^*$ is flat, then the natural homomorphism
\begin{align}
    W_{\hbar}(T^*R(Q,\vv))^{G(\vv)}\to \Gamma(\mathcal M^{\theta}_U(Q,\vv),\mathcal O_{\hbar,\mathcal M^{\theta}_U(Q,\vv)})
\end{align}
gives rise to an isomorphism between $\mathcal A_{\hbar,U}(Q,\vv)$ and the sub-algebra of $\mathbb C^{\times}$-finite elements in $\Gamma(\mathcal M^{\theta}_U(Q,\vv),\mathcal O_{\hbar,\mathcal M^{\theta}_U(Q,\vv)})$.
\end{proposition}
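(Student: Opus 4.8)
The plan is to compare the two $\mathbb{C}[\hbar]$-algebras by reducing modulo $\hbar$ and then propagating the resulting isomorphism weight-by-weight, exploiting the $\mathbb{C}^{\times}$-grading in which $\hbar$ lies in positive weight. Write $B=\mathcal{A}_{\hbar,U}(Q,\vv)$ and let $C$ denote the subalgebra of $\mathbb{C}^{\times}$-finite elements of $\Gamma(\mathcal{M}^{\theta}_U(Q,\vv),\mathcal{O}_{\hbar,\mathcal{M}^{\theta}_U(Q,\vv)})$. Because $\mathbb{C}^{\times}$ acts on $T^*R(Q,\vv)$ with weight $-1$ and on $\hbar$ with weight $2$, both $W_{\hbar}(T^*R(Q,\vv))$ and its reduction $B$ are $\mathbb{Z}_{\geq 0}$-graded with finite-dimensional graded pieces, and in each weight only finitely many powers of $\hbar$ occur; this is what will make the distinction between $\mathbb{C}[\hbar]$ and $\mathbb{C}[\![\hbar]\!]$ irrelevant on graded pieces. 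By the construction of the sheafified reduction on the stable locus (where $\mathcal{M}^{\theta}_U(Q,\vv)$ is smooth and $G(\vv)$ acts freely by Remark \ref{rmk: smoothness}) the natural homomorphism kills the quantum moment-map constraints, so it factors through a graded $\mathbb{C}[\hbar]$-linear map $\psi\colon B\to C$.

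First I would carry out the reduction modulo $\hbar$. On one side, flatness of $\mu$ enters through Lemma \ref{lem: quantization commutes with reduction_quiver case}, which gives $B/(\hbar)\cong \mathbb{C}[\mathcal{M}_U(Q,\vv)]$. On the other side, since $\mathcal{O}_{\hbar,\mathcal{M}^{\theta}_U(Q,\vv)}$ is $\hbar$-torsion-free with reduction $\mathcal{O}_{\mathcal{M}^{\theta}_U(Q,\vv)}$, applying the left-exact functor $\Gamma$ to $0\to \mathcal{O}_{\hbar}\xrightarrow{\hbar}\mathcal{O}_{\hbar}\to \mathcal{O}_{\mathcal{M}^{\theta}_U(Q,\vv)}\to 0$ shows that $\Gamma(\mathcal{O}_{\hbar})$ is $\hbar$-torsion-free and that $\Gamma(\mathcal{O}_{\hbar})/(\hbar)\hookrightarrow \Gamma(\mathcal{M}^{\theta}_U(Q,\vv),\mathcal{O}_{\mathcal{M}^{\theta}_U(Q,\vv)})$, the latter being $\mathbb{C}[\mathcal{M}_U(Q,\vv)]$ by the GIT construction of the quotient (cf. Corollary \ref{cor: resolution of singularity}). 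I would then note that the composite $\mathbb{C}[\mathcal{M}_U(Q,\vv)]=B/(\hbar)\xrightarrow{\bar\psi}\Gamma(\mathcal{O}_{\hbar})/(\hbar)\hookrightarrow \mathbb{C}[\mathcal{M}_U(Q,\vv)]$ is the canonical pullback along the affinization $\mathcal{M}^{\theta}_U(Q,\vv)\to \mathcal{M}_U(Q,\vv)$, hence an isomorphism. As the second arrow is injective and the composite is surjective, both arrows are isomorphisms; in particular $\Gamma(\mathcal{O}_{\hbar})/(\hbar)=\mathbb{C}[\mathcal{M}_U(Q,\vv)]$ is entirely $\mathbb{C}^{\times}$-finite, non-negatively graded with finite-dimensional graded pieces, and $\bar\psi$ is an isomorphism.

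Finally I would upgrade ``$\bar\psi$ an isomorphism'' to ``$\psi$ an isomorphism onto $C$''. Injectivity is immediate: $B$ is $\hbar$-torsion-free and $\bar\psi$ is injective, so the lowest $\hbar$-order term of any nonzero element of $\ker\psi$ would contradict injectivity of $\bar\psi$. For surjectivity I would argue by descending induction on the $\mathbb{C}^{\times}$-weight. Since $C\subseteq \Gamma(\mathcal{O}_{\hbar})$ is $\hbar$-torsion-free and $\hbar$-adically separated and $C/(\hbar)$ embeds in $\mathbb{C}[\mathcal{M}_U(Q,\vv)]$, each graded piece $C_n$ is finite-dimensional, and $C_n=\hbar^{k}C_{n-2k}\subseteq \bigcap_k \hbar^k C=0$ forces $C_n=0$ for $n<0$. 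Given a weight-$n$ element $c\in C$, choose $b_0\in B$ of weight $n$ with $\psi(b_0)\equiv c\pmod{\hbar}$; then $c-\psi(b_0)=\hbar c'$ with $c'\in C$ of weight $n-2$, and since all weights are $\geq 0$ this recursion terminates after at most $n/2$ steps, exhibiting $c$ as $\psi$ of a genuine finite element of $B$. This termination, forced by the non-negative grading, is what replaces an $\hbar$-adic completeness argument.

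The main obstacle, I expect, is the modulo-$\hbar$ step: one must check both that $\Gamma(\mathcal{O}_{\hbar})/(\hbar)$ injects into the classical global sections and that the algebraic reduction maps isomorphically onto $\mathbb{C}[\mathcal{M}_U(Q,\vv)]$, thereby excluding any higher-cohomology contribution to $\Gamma(\mathcal{O}_{\hbar})/(\hbar)$. This is precisely where flatness of $\mu$ is indispensable, via Lemma \ref{lem: quantization commutes with reduction_quiver case} and the affinization isomorphism of Corollary \ref{cor: resolution of singularity}; everything downstream is formal graded algebra over $\mathbb{C}[\hbar]$, in the spirit of \cite{losev2012isomorphisms}.
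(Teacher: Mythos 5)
Your argument is correct, but it is structured quite differently from the paper's. The paper's proof is a three-line verification of the hypotheses (i)--(iii) of \cite[Lemma 4.2.4]{losev2012isomorphisms}: (i) quantization commutes with reduction (Lemma \ref{lem: quantization commutes with reduction_quiver case}), (ii) freeness of the action on the semistable locus from indivisibility of $\vv$ and genericity of $\theta$, and (iii) the birational/Poisson affinization property from Corollary \ref{cor: resolution of singularity}. What you have written is, in effect, a self-contained reproof of Losev's lemma in this special case: your mod-$\hbar$ step is exactly input (i), your identification $\Gamma(\mathcal M^{\theta}_U(Q,\vv),\mathcal O_{\mathcal M^{\theta}_U(Q,\vv)})\cong \mathbb C[\mathcal M_U(Q,\vv)]$ is exactly input (iii), and input (ii) is hidden in your (legitimate) use of the fact that $\mathcal O_{\hbar,\mathcal M^{\theta}_U(Q,\vv)}$ is an $\hbar$-flat quantization of $\mathcal O_{\mathcal M^{\theta}_U(Q,\vv)}$. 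Your replacement of the $\hbar$-adic completion argument by the observation that the non-negative $\mathbb C^{\times}$-grading (with $\hbar$ in weight $2$ and finite-dimensional graded pieces) forces the lifting recursion to terminate is sound and makes the passage from the completed to the $\mathbb C^{\times}$-finite setting transparent; this is the same mechanism as in Losev's proof but laid bare. Two small cautions: the isomorphism $\Gamma(\mathcal M^{\theta}_U(Q,\vv),\mathcal O_{\mathcal M^{\theta}_U(Q,\vv)})\cong\mathbb C[\mathcal M_U(Q,\vv)]$ is \emph{not} a formal consequence of the GIT construction --- it is precisely the flatness-dependent affinization statement, and Corollary \ref{cor: resolution of singularity} states it at $\lambda=0$, so for the family over $U$ you should invoke the base-change/contraction argument of Lemma \ref{lemma: normality of family implies reducedness} (with $\theta'=\theta$, $\theta=0$) rather than the Corollary alone; you do flag this as the crux, so the gap is one of citation rather than of substance.
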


\begin{proof}
We need to check the conditions (i), (ii), and (iii) in \cite[Lemma 4.2.4]{losev2012isomorphisms}. Condition (i) is checked by Lemma \ref{lem: quantization commutes with reduction_quiver case}; condition (ii) holds because $\vv$ is indivisible and $\theta$ is generic; condition (iii) follows from that $p^{\theta}:\mathcal M^{\theta}_{\lambda}(Q,\vv)\to \mathcal M_{\lambda}(Q,\vv)$ is birational and Poisson for all $\lambda\in Z$ (see Corollary \ref{cor: resolution of singularity}), and the Poisson structure on $\mathcal M^{\theta}_{\lambda}(Q,\vv)$ is symplectic.
\end{proof}

\subsection{Finite W-algebras}\label{subsec: Finite W-algebras}
Let $G$ be a reductive algebraic group, $\mathfrak{g}$ be the Lie algebra of $G$. Pick a nilpotent element $e\in \mathfrak{g}$ and choose an $\mathfrak{sl}_2$-triple $e,f,h$. Set $\chi\in \mathfrak{g}^*$ be $(e,-)$. Consider the grading $\mathfrak{g}=\bigoplus_i \mathfrak{g}(i)$ by the eigenvalues of $\mathrm{ad}(h)$. It is easy to see that the skew-symmetric form $\langle\xi,\eta\rangle =\chi([\xi,\eta])$ is non-degenerate on $\mathfrak{g}(-1)$. Pick a Lagrangian $l\subset \mathfrak{g}(-1)$ and set $\mathfrak{m}:=l\oplus\bigoplus _{i\le -2}\mathfrak{g}(-i)$. Note that $\chi\in (\mathfrak{m}/[\mathfrak{m},\mathfrak{m}])^*$. Since $\mathfrak{m}$ is nilpotent, it exponentiates to an algebraic subgroup $M\subset G$. The finite W-algebra $\mathcal W_{\hbar}$ is defined as the quantum Hamiltonian reduction $U_{\hbar}(\mathfrak{g})\sslash_{\chi}M$, where $U_{\hbar}(\mathfrak{g})=T(\mathfrak{g})/(xy-yx-\hbar [x,y] \: |\: x,y\in \mathfrak{g})$.

Let $P\subset G$ be a parabolic subgroup and $P_0=[P,P]$, and let $\mathfrak{a}=\mathfrak{p}/\mathfrak{p}_0$. Consider the $\mathbb C[\mathfrak{a}^*][\hbar]$-algbera $\mathcal A_{\hbar}$ defined as $\mathbb C^{\times}$-finite elements in $\Gamma(G/P,D_{\hbar}(G/P_0)^{P/P_0})$, where $D_{\hbar}(G/P_0)$ is the sheaf of $\hbar$-adic differential operators and the action of $P/P_0$ on $D_{\hbar}(G/P_0)$ is induced from the right action on $G/P_0$, and the $\mathbb C^{\times}$-action on $D_{\hbar}(G/P_0)$ is inherited from $\mathbb C^{\times}$-action on $T^*(G/P_0)$ by scaling cotangent fibers of weight $-2$. Note that the $\mathbb C[\mathfrak{a}^*]$-algebra structure comes from a modification of the moment map $\Phi_{\hbar}:\mathfrak{a}\subset U_{\hbar}(\mathfrak{g})\to \mathcal{A}_{\hbar}$, which is defined as $a\mapsto \Phi_{\hbar}(a)-\langle \rho,a\rangle\hbar$, where $\rho$ is the half sum of positive roots of $\mathfrak{g}$. The parabolic finite W-algebra $\mathcal W^P_{\hbar,\mathfrak{a}}$ is defined as the quantum Hamiltonian reduction $\mathcal A_{\hbar}\sslash_{\chi} M$.

We focus on the type A case. Explicitly, $G=\SL_N$, and fix $r_1,\cdots,r_n\in \mathbb Z_{\ge 0}$ with $\sum_{i=1}^nr_i=N$, then $r_1,\cdots,r_n$ defines a parabolic subgroup $P\subset \SL_N$ as the stabilizer of a partial flag $\mathcal F=(0\subset F_1\subset F_2\subset\cdots\subset F_n=\mathbb C^N)$ with $\dim F_j=\sum_{i=1}^jr_i$. Also pick $\dd=(d_1,\cdots,d_{n-1})\in \mathbb Z^{n-1}_{\ge 0}$ with $\sum_{i=1}^{n-1}id_i=N$ and let $e\in \mathfrak{sl}_N$ be a nilpotent element whose Jordan type is $(1^{d_1},2^{d_2},\cdots,(n-1)^{d_{n-1}})$. Define $\vv=(v_1,\cdots,v_{n-1})$ by 
\begin{align}
    v_{n-1}=r_n,\; v_i=\sum_{j=i+1}^nr_j-\sum_{j=i+1}^n(j-i)d_j.
\end{align}
Below we assume that all $v_i$'s are non-negative. Let $Q$ be an $A_{n-1}$ quiver so we can identify $\vv$ as a dimension vector and $\dd$ as a framing vector. View $\mathfrak{a}=\mathfrak{p}/\mathfrak{p}_0$ as the space $\{\mathrm{diag}(x_1,\cdots,x_1,\cdots,x_n,\cdots,x_n)\}$ matrices such that $x_i$ appears $r_i$ times with $\sum_{i=1}^nr_ix_i=0$. Map $\mathfrak{a}$ to $\mathbb C^{Q_0}$ by sending $\mathrm{diag}(x_1,\cdots,x_n)$ to $\sum_{i=1}^{n-1}(\sum_{j=1}^ir_jx_j)\epsilon_i$. The composition of this map with the natural projection $\mathbb C^{Q_0}\twoheadrightarrow Z^*=\mathfrak{gl}(\vv)/[\mathfrak{gl}(\vv),\mathfrak{gl}(\vv)]$ defined by
\begin{align*}
    \epsilon_i\mapsto \begin{cases}
        \frac{1}{v_i}\mathrm{Id}_{v_i} &, v_i\neq 0\\ 
        0 &, v_i=0 \\
        \end{cases}
\end{align*}
gives rise to a map $\mathfrak{a}\to Z^*$. Note that this is an isomorphism if all $r_i$'s and $v_i$'s are positive. Define the $\mathbb C[Z][\hbar]$-algebra $$\mathcal W_{\hbar, Z}^P:=\mathcal W_{\hbar, \mathfrak{a}}^P\otimes _{\mathbb C[\mathfrak{a}^*]} \mathbb C[Z],$$where $\mathcal W_{\hbar, \mathfrak{a}}^P$ is the parabolic finite W-algebra associated to the aforementioned parabolic subgroup $P$ and nilpotent element $e$. Apply the construction in the previous subsection and we obtain a $\mathbb C[Z][\hbar]$-algebra $$\mathcal A_{\hbar,Z}(Q,\vv,\dd):=\mathcal A_{\hbar,Z}(Q^{\dd},\vv^{\dd}).$$Losev showed in \cite[Theorem 5.3.3]{losev2012isomorphisms} that if all $v_i$'s are positive and $r_i\ge r_{i+1}$ then $\mathcal A_{\hbar,Z}(Q,\vv,\dd)$ is $\mathbb C[Z][\hbar]$-linearly isomorphic to $\mathcal W_{\hbar, Z}^P$\footnote{If all $v_i$'s are positive and $r_i\ge r_{i+1}$, then all $r_i$'s are positive since $r_n=v_{n-1}>0$, and it follows that the map $\mathfrak{a}\to Z^*$ is an isomorphism.}. Here we state a refinement of \textit{loc. cit.}.
\begin{proposition}\label{prop: quantum quiver scheme and W-algebra}
There exists a $\mathbb C[Z][\hbar]$-linear epimorphism $\mathcal A_{\hbar,Z}(Q,\vv,\dd)\twoheadrightarrow \mathcal W_{\hbar, Z}^P$ of graded associative algebras. Moreover it is an isomorphism if and only if $r_i-r_j\ge -1$ for all $1\le i<j\le n$.
\end{proposition}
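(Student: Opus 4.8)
The plan is to reduce the statement to the classical limit $\hbar \to 0$ and to recognise the hypothesis as flatness of the quiver moment map. First I would verify that ``$r_i - r_j \ge -1$ for all $1 \le i < j \le n$'' is exactly flatness of $\mu$ for $(Q,\vv,\dd)$. As $Q = A_{n-1}$ has finite type, every positive root is an interval $\ee_I$, $I = \{a,a+1,\dots,b\}$, and the argument of case (1) of Proposition \ref{prop: flatness in type A} shows that flatness is equivalent to $\ee_I \cdot (\dd - C_Q\vv) \ge -1$ for all such $I$. The defining formula for $\vv$ gives $v_{i-1} - v_i = r_i - \sum_{j \ge i} d_j$, and a telescoping computation then yields
\begin{align*}
  \ee_I \cdot (\dd - C_Q\vv) = r_a - r_{b+1},
\end{align*}
the framing terms cancelling. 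As $(a,b+1)$ ranges over all pairs $(i,j)$ with $i<j$, this is the stated inequality, so the hypothesis is equivalent to flatness of $\mu$; under it I may invoke Theorem \ref{thm: reducedness}, Lemma \ref{lem: characterization of flatness}, and Lemma \ref{lem: quantization commutes with reduction_quiver case}.

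For the map I would take the comparison homomorphism of quantum Hamiltonian reductions from Losev's proof of \cite[Theorem 5.3.3]{losev2012isomorphisms}; its construction produces a graded $\mathbb C[Z][\hbar]$-algebra map $\Psi\colon \mathcal A_{\hbar,Z}(Q,\vv,\dd) \to \mathcal W^P_{\hbar,Z}$ without using the positivity or monotonicity hypotheses of \textit{loc. cit.} Both sides are graded and $\hbar$-flat with finite-dimensional graded pieces, so by a graded Nakayama argument $\Psi$ is surjective (resp.\ an isomorphism) if and only if its reduction $\bar\Psi$ modulo $\hbar$ is. The source of $\bar\Psi$ surjects onto $\mathbb C[\mathcal M_Z(Q,\vv,\dd)]$, the target is the coordinate ring of the classical parabolic Slodowy family $\mathcal S_Z$, and the associated graded of $\Psi$ is the pullback along the Maffei--Nakajima closed embedding $\mathcal S_Z \hookrightarrow \mathcal M_Z(Q,\vv,\dd)$, which is surjective. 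This produces the epimorphism in all cases.

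It remains to decide when $\bar\Psi$ is an isomorphism. If the condition holds then $\mu$ is flat, so Lemma \ref{lem: quantization commutes with reduction_quiver case} identifies $\mathcal A_{\hbar,Z}/(\hbar)$ with $\mathbb C[\mathcal M_Z(Q,\vv,\dd)]$, which is reduced by Theorem \ref{thm: reducedness}; by the equivalence (1)$\Leftrightarrow$(3) of Lemma \ref{lem: characterization of flatness} it has dimension $\dim Z + 2\pp(\vv^\dd) = \dim \mathcal S_Z$, so the embedding $\mathcal S_Z \hookrightarrow \mathcal M_Z(Q,\vv,\dd)$ is birational. Being a surjection that is an isomorphism over a dense open and whose source is reduced, $\bar\Psi$ has kernel vanishing on a dense open, hence zero; thus $\bar\Psi$, and so $\Psi$, is an isomorphism. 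Conversely, if the condition fails then $\mu$ is not flat and $\dim \mathcal M_0(Q,\vv,\dd) > 2\pp(\vv^\dd) = \dim \mathcal S_0$; specialising $\bar\Psi$ at $\lambda = 0$ gives a surjection onto $\mathbb C[\mathcal S_0]$ whose source surjects onto $\mathbb C[\mathcal M_0(Q,\vv,\dd)]$, a ring of strictly larger Krull dimension, so $\bar\Psi$ is not injective and $\Psi$ is not an isomorphism. As an alternative in the equal-dimension case, one may sort $(r_1,\dots,r_n)$ to a non-increasing sequence through $(\pm 1)$-reflection isomorphisms (Theorem \ref{thm: reflection isom}): every ascent permitted by the hypothesis is by exactly one, since $(\vv,\ee_i)_Q = r_{i+1} - r_i$, so each is an available $(\pm1)$-reflection, and after discarding vanishing $v_i$ by passing to subquivers one reaches the regime of Losev's isomorphism.

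The main obstacle I anticipate is the classical input behind both surjectivity and the equal-dimension isomorphism: showing that the $\hbar\to 0$ limit of $\mathcal W^P_{\hbar,Z}$ is exactly the reduced coordinate ring of the parabolic Slodowy family, and that Maffei--Nakajima furnishes a birational closed embedding $\mathcal S_Z \hookrightarrow \mathcal M_Z(Q,\vv,\dd)$ compatible with $\Psi$ and matching irreducible components, particularly on the degenerate strata where some $v_i$ vanishes and the quiver splits --- a situation outside Losev's original hypotheses that must be handled directly.
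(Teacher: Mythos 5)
Your proposal follows essentially the same route as the paper: the identity $\ee_{i,j}\cdot(\dd-C_Q\vv)=r_i-r_j$ converts the hypothesis into flatness of $\mu$, the epimorphism is obtained by reducing mod $\hbar$ and comparing $\mathbb C[\mathcal M_Z(Q,\vv,\dd)]$ with the coordinate ring of the Slodowy family, the ``if'' direction uses flatness to make the classical comparison birational (the paper packages this as Proposition \ref{prop: isomorphism different stability condition}), and the ``only if'' direction is the same dimension count via Lemma \ref{lem: characterization of flatness}. The obstacle you flag at the end is exactly where the paper does its work: it constructs $\Psi$ not as Losev's original comparison map but by first identifying $\mathcal W^P_{\hbar,Z}$ with the $\mathbb C^{\times}$-finite sections of the quantized structure sheaf on the completion of $\mathcal M^{\theta}_Z(Q,\vv,\dd)$ (via Maffei's symplectomorphism and the matching of quantum period maps, \cite[Lemma 5.2.1, Lemma 4.6.5]{losev2012isomorphisms}) and then mapping $\mathcal A_{\hbar,Z}(Q,\vv,\dd)$ into those sections following \cite[Lemma 4.2.4]{losev2012isomorphisms}, with surjectivity of the classical limit supplied by \cite[Theorem 12]{maffei2005quiver} and the positivity of the $\mathbb C^{\times}$-action.
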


\begin{proof}
Recall that $\mathcal W_{\hbar, \mathfrak{a}}^P$ is the algebra of $\mathbb C^{\times}$-finite elements in $\Gamma(\widetilde{S}(e,P),\mathcal O_{\hbar,\widetilde{S}(e,P)})$ for a $\mathbb C^{\times}$-equivariant even quantization $\mathcal O_{\hbar,\widetilde{S}(e,P)}$ on the deformed Slodowy variety $\widetilde{S}(e,P)$ (see \cite[Lemma 5.2.1(2)]{losev2012isomorphisms}). Note that the Slodowy variety $S(e,P)$ is $\mathbb C^{\times}$-equivariantly symplectomorphic to $\mathcal M^{\theta}_0(Q,\vv,\dd)$ \cite{maffei2005quiver}, where $\theta=(-1,\cdots,-1)$ is a generic stability condition. The aforementioned quantized structure sheaf of $\mathcal M^{\theta}_Z(Q,\vv,\dd)$, denoted by $\mathcal O_{\hbar,\mathcal M^{\theta}_Z(Q,\vv,\dd)}$, is also an even quantization, moreover the quantum period maps $Z\to H^2_{\mathrm{DR}}(\mathcal M^{\theta}_0(Q,\vv,\dd))\cong H^2_{\mathrm{DR}}(S(e,P))$ and $\mathfrak{a}^*\to H^2_{\mathrm{DR}}(S(e,P))$ are intertwined by the map $Z\to \mathfrak{a}^*$ \cite[Lemma 4.6.5]{losev2012isomorphisms}, henceforth there is a $\mathbb C^{\times}$-equivariant isomorphism between formal schemes $\widetilde{S}(e,P)\times_{\mathfrak{a}^*} \widehat{Z}\cong \widehat{\mathcal M}^{\theta}_Z(Q,\vv,\dd)$ together with a $\mathbb C^{\times}$-equivariant isomorphism between sheaf of flat $\mathbb C[\![\hbar]\!]$-algebras $\mathcal O_{\hbar,\widetilde{S}(e,P)}\widehat{\otimes}_{\mathbb C[\![\mathfrak{a}^*]\!]} \mathbb C[\![Z]\!]\cong \mathcal{O}_{\hbar,\widehat{\mathcal M}^{\theta}_Z(Q,\vv,\dd)}$, where $\widehat{Z}$ is the completion of $Z$ at $0$ and $\widehat{\mathcal M}^{\theta}_Z(Q,\vv,\dd)$ is the completion of $\mathcal M^{\theta}_Z(Q,\vv,\dd)$ along $\mathcal M^{\theta}_0(Q,\vv,\dd)$. Thus there exists a $\mathbb C[Z][\hbar]$-linear isomorphism between $\mathcal W_{\hbar, Z}^P$ and the algebra of $\mathbb C^{\times}$-finite elements in $\Gamma(\widehat{\mathcal M}^{\theta}_Z(Q,\vv,\dd),\mathcal O_{\hbar,\widehat{\mathcal M}^{\theta}_Z(Q,\vv,\dd)})$. 

By \cite[Theorem 12]{maffei2005quiver}, the projection $p^{\theta}:\mathcal M^{\theta}_0(Q,\vv,\dd)\to \mathcal M_0(Q,\vv,\dd)$ maps $\mathcal M^{\theta}_0(Q,\vv,\dd)$ birationally to its image in $\mathcal M_0(Q,\vv,\dd)$ and its image is a normal variety, thus $R^ip^{\theta}_*\mathcal O_{\mathcal M^{\theta}_0(Q,\vv,\dd)}=0$ for all $i>0$ and the natural map $\mathbb C[\mathcal M_0(Q,\vv,\dd)]\to \Gamma({\mathcal M}^{\theta}_0(Q,\vv,\dd),\mathcal O_{\hbar,{\mathcal M}^{\theta}_0(Q,\vv,\dd)})$ is surjective. Since the $\mathbb C^{\times}$-action on $\mathcal M^{\theta}_Z(Q,\vv,\dd)$ is positive, we conclude that $R^ip^{\theta}_*\mathcal O_{\mathcal M^{\theta}_Z(Q,\vv,\dd)}=0$ for all $i>0$ and the natural map $\mathbb C[\mathcal M_Z(Q,\vv,\dd)]\to \Gamma({\mathcal M}^{\theta}_Z(Q,\vv,\dd),\mathcal O_{\hbar,{\mathcal M}^{\theta}_Z(Q,\vv,\dd)})$ is surjective. 

Following verbatim argument as the proof of \cite[Lemma 4.2.4]{losev2012isomorphisms}, there is a $\mathbb C[Z][\![\hbar]\!]$-linear map of graded associative algebras $\mathcal A_{\hbar,Z}(Q,\vv,\dd)^{\wedge_{\hbar}}\to \Gamma({\mathcal M}^{\theta}_Z(Q,\vv,\dd),\mathcal O_{\hbar,{\mathcal M}^{\theta}_Z(Q,\vv,\dd)})$ and it fits into a commutative diagram
\begin{center}
\begin{tikzcd}
\mathcal A_{\hbar,Z}(Q,\vv,\dd)^{\wedge_{\hbar}} \arrow[d]  \arrow[r] & \Gamma({\mathcal M}^{\theta}_Z(Q,\vv,\dd),\mathcal O_{\hbar,{\mathcal M}^{\theta}_Z(Q,\vv,\dd)}) \arrow[d]\\
\mathbb C[\mathcal M_Z(Q,\vv,\dd)]  \arrow[r] & \Gamma({\mathcal M}^{\theta}_Z(Q,\vv,\dd),\mathcal O_{{\mathcal M}^{\theta}_Z(Q,\vv,\dd)})
\end{tikzcd}
\end{center}
where $\mathcal A_{\hbar,Z}(Q,\vv,\dd)^{\wedge_{\hbar}}$ is the completion of $\mathcal A_{\hbar,Z}(Q,\vv,\dd)$ in the $\hbar$-adic topology. The bottom horizontal arrow and vertical arrows are epimorphisms, thus the top horizontal arrow is also an epimorphism since both domain and codomain are flat over $\mathbb C[\![\hbar]\!]$ and it is surjective modulo $\hbar$. Since $\mathcal A_{\hbar,Z}(Q,\vv,\dd)$ is the sub-algebra of $\mathbb C^{\times}$-finite elements in $\mathcal A_{\hbar,Z}(Q,\vv,\dd)^{\wedge_{\hbar}}$, it follows that $\mathcal A_{\hbar,Z}(Q,\vv,\dd)$ maps surjectively to the sub-algebra of $\mathbb C^{\times}$-finite elements in $\Gamma({\mathcal M}^{\theta}_Z(Q,\vv,\dd),\mathcal O_{\hbar,{\mathcal M}^{\theta}_Z(Q,\vv,\dd)})$. Since $\mathbb C^{\times}$-finite elements in $\Gamma(\widehat{\mathcal M}^{\theta}_Z(Q,\vv,\dd),\mathcal O_{\hbar,\widehat{\mathcal M}^{\theta}_Z(Q,\vv,\dd)})$ are the same as $\mathbb C^{\times}$-finite elements in $\Gamma({\mathcal M}^{\theta}_Z(Q,\vv,\dd),\mathcal O_{\hbar,{\mathcal M}^{\theta}_Z(Q,\vv,\dd)})$, the first assertion is proven.

Next, suppose that the epimorphism $\mathcal A_{\hbar,Z}(Q,\vv,\dd)\twoheadrightarrow \mathcal W_{\hbar, Z}^P$ we constructed above is an isomorphism, then it is an isomorphism modulo $Z^*$ and $\hbar$. Therefore, the composition $$\mathcal A_{\hbar,Z}(Q,\vv,\dd)/(Z^*,\hbar)\twoheadrightarrow \mathbb C[\mathcal M_0(Q,\vv,\dd)]\to \Gamma({\mathcal M}^{\theta}_0(Q,\vv,\dd),\mathcal O_{{\mathcal M}^{\theta}_0(Q,\vv,\dd)})$$is an isomorphism. This implies that $\mathbb C[\mathcal M_0(Q,\vv,\dd)]\to \Gamma({\mathcal M}^{\theta}_0(Q,\vv,\dd),\mathcal O_{{\mathcal M}^{\theta}_0(Q,\vv,\dd)})$ is injective, in particular $\dim \mathcal M_0(Q,\vv,\dd)=\dim\mathcal M^{\theta}_0(Q,\vv,\dd)$, then by Lemma \ref{lem: characterization of flatness} the moment map for $(Q,\vv,\dd)$ is flat. By Proposition \ref{prop: flatness in type A}, the flatness of moment map is equivalent to $\ee_I\cdot(\dd-C_Q\vv)\ge -1$ for arbitrary subset $I\subset Q_0$ such that nodes in $I$ are connected by arrows in $Q$. For $1\le i<j\le n$, let $\ee_{i,j}=\sum_{k=i}^{j-1}\ee_k$, then it is easy to see that $\ee_{i,j}\cdot(\dd-C_Q\vv)=r_i-r_j$, thus the flatness of moment map is equivalent to $r_i-r_j\ge -1$ for all $1\le i<j\le n$.

Conversely, assume that $r_i-r_j\ge -1$ for all $1\le i<j\le n$, then the moment map is flat, thus by Proposition \ref{prop: isomorphism different stability condition} $\mathcal A_{\hbar,Z}(Q,\vv,\dd)$ maps isomorphically to $\mathbb C^{\times}$-finite elements in $\Gamma({\mathcal M}^{\theta}_Z(Q,\vv,\dd),\mathcal O_{\hbar,{\mathcal M}^{\theta}_Z(Q,\vv,\dd)})$, which is isomorphic to $\mathcal W_{\hbar, Z}^P$ by our previous discussion. 
\end{proof}

\bibliographystyle{unsrt}
\bibliography{Bib}

\begin{thebibliography}{10}

\bibitem{nakajima1998quiver}
Hiraku Nakajima.
\newblock {Quiver varieties and Kac-Moody algebras}.
\newblock {\em Duke Mathematical Journal}, 91(3):515--560, 1998.

\bibitem{varagnolo2000quiver}
Michela Varagnolo.
\newblock {Quiver varieties and Yangians}.
\newblock {\em Letters in Mathematical Physics}, 53(4):273--283, 2000.

\bibitem{nakajima2001quiver}
Hiraku Nakajima.
\newblock Quiver varieties and finite dimensional representations of quantum
  affine algebras.
\newblock {\em Journal of the American Mathematical Society}, pages 145--238,
  2001.

\bibitem{maulik2012quantum}
Davesh Maulik and Andrei Okounkov.
\newblock Quantum groups and quantum cohomology.
\newblock {\em arXiv preprint arXiv:1211.1287}, 2012.

\bibitem{intriligator1996mirror}
Kenneth Intriligator and Nathan Seiberg.
\newblock Mirror symmetry in three dimensional gauge theories.
\newblock {\em Physics Letters B}, 387(3):513--519, 1996.

\bibitem{nakajima1994instantons}
Hiraku Nakajima.
\newblock {Instantons on ALE spaces, quiver varieties, and Kac-Moody algebras}.
\newblock {\em Duke Mathematical Journal}, 76(2):365--416, 1994.

\bibitem{king1994moduli}
Alastair~D. King.
\newblock Moduli of representations of finite dimensional algebras.
\newblock {\em The Quarterly Journal of Mathematics}, 45(4):515--530, 1994.

\bibitem{nekrasov2004abcd}
Nikita Nekrasov and Sergey Shadchin.
\newblock {ABCD of instantons}.
\newblock {\em Communications in mathematical physics}, 252(1):359--391, 2004.

\bibitem{butti2007counting}
Agostino Butti, Davide Forcella, Amihay Hanany, David Vegh, and Alberto
  Zaffaroni.
\newblock Counting chiral operators in quiver gauge theories.
\newblock {\em Journal of High Energy Physics}, 2007(11):092, 2007.

\bibitem{benvenuti2010hilbert}
Sergio Benvenuti, Amihay Hanany, and Noppadol Mekareeya.
\newblock {The Hilbert series of the one instanton moduli space}.
\newblock {\em Journal of High Energy Physics}, 2010(6):1--40, 2010.

\bibitem{gan2006almost}
Wee~Liang Gan and Victor Ginzburg.
\newblock {Almost-commuting variety, D-modules, and Cherednik algebras}.
\newblock {\em International Mathematics Research Papers}, 2006:26439, 2006.

\bibitem{crawley2001geometry}
William Crawley-Boevey.
\newblock Geometry of the moment map for representations of quivers.
\newblock {\em Compositio Mathematica}, 126(3):257--293, 2001.

\bibitem{bellamy2016symplectic}
Gwyn Bellamy and Travis Schedler.
\newblock Symplectic resolutions of quiver varieties.
\newblock {\em arXiv preprint arXiv:1602.00164}, 2016.

\bibitem{choy2016corrigendum}
Jaeyoo Choy.
\newblock {Corrigendum and addendum to “Moduli spaces of framed symplectic
  and orthogonal bundles on $\mathbb P^2$ and the K-theoretic Nekrasov
  partition functions”[J. Geom. Phys. 106 (2016) 284--304]}.
\newblock {\em Journal of Geometry and Physics}, 110:343--347, 2016.

\bibitem{choy2016moduli}
Jaeyoo Choy.
\newblock {Moduli spaces of framed symplectic and orthogonal bundles on
  $\mathbb P^2$ and the K-theoretic Nekrasov partition functions}.
\newblock {\em Journal of Geometry and Physics}, 106:284--304, 2016.

\bibitem{su2006flatness}
Xiuping Su.
\newblock Flatness for the moment map for representations of quivers.
\newblock {\em Journal of Algebra}, 298(1):105--119, 2006.

\bibitem{crawley2001normality}
William Crawley-Boevey.
\newblock {Normality of Marsden-Weinstein reductions for representations of
  quivers}.
\newblock {\em arXiv preprint math/0105247}, 2001.

\bibitem{luna1973slices}
Domingo Luna.
\newblock Slices {\'e}tales.
\newblock {\em Bull. Soc. Math. France}, 33:81--105, 1973.

\bibitem{maffei2002remark}
Andrea Maffei.
\newblock {A remark on quiver varieties and Weyl groups}.
\newblock {\em Annali della Scuola Normale Superiore di Pisa-Classe di
  Scienze}, 1(3):649--686, 2002.

\bibitem{losev2012isomorphisms}
Ivan Losev.
\newblock Isomorphisms of quantizations via quantization of resolutions.
\newblock {\em Advances in Mathematics}, 231(3-4):1216--1270, 2012.

\bibitem{maffei2005quiver}
Andrea Maffei.
\newblock {Quiver varieties of type A}.
\newblock {\em Commentarii Mathematici Helvetici}, 80(1):1--27, 2005.

\end{thebibliography}

\end{document}